\def\IC{\mathbb{C}}
\def\IR{\mathbb{R}}
\def\IZ{\mathbb{Z}}
\def\IM{\mathbb{M}}
\def\IQ{\mathbb{Q}}
\def\El{\mathcal{E}\ell}
\newcommand{\Cstar}{$C^*$}
\def\CL{\mathcal{L}}
\def\CE{\mathcal{E}}
\newcommand\ICB{\mathop{\mathrm{ICB}}\nolimits}
\newcommand\IB{\mathop{\mathrm{IB}}\nolimits}
\newcommand\TM{\mathop{\mathrm{TM}}\nolimits}
\newcommand\Ext{\mathop{\mathrm{Ext}}\nolimits}
\newcommand{\SPAN}{\mathrm{span}}
\newcommand{\tr}{\mathrm{tr}}
\numberwithin{equation}{section}
\newtheorem{proposition}{Proposition}[section]
\newtheorem{lemma}[proposition]{Lemma}
\newtheorem{theorem}[proposition]{Theorem}
\newtheorem{corollary}[proposition]{Corollary}
\theoremstyle{definition}
\newtheorem{remark}[proposition]{Remark}
\newtheorem{question}[proposition]{Question}
\newtheorem{definition}[proposition]{Definition}
\newtheorem{example}[proposition]{Example}
\newtheorem{notation}[proposition]{Notation}
\begin{document}
\title[Two-sided multiplications and phantom line bundles]{The closure of two-sided
multiplications on C*-algebras and phantom line bundles}
\author[I. Gogi\'c]{Ilja Gogi\'c$^{1,2}$}
\address[1]{School of Mathematics\\Trinity College\\Dublin 2\\ Ireland}
\address[2]{Department of Mathematics\\University of Zagreb\\Bijeni\v cka
	cesta 30\\ Zagreb, 10000\\ Croatia}
\thanks{The work of the first author was supported by the Irish
Research Council under grant GOIPD/2014/7}
\author[R.~M.~Timoney]{Richard M. Timoney$^2$}
\email{ilja@maths.tcd.ie, ilja@math.hr}
\email{richardt@maths.tcd.ie}
\thanks{The work of the second author was supported
by the Science Foundation Ireland under grant 11/RFP/MTH3187.}
\date{Version \today{}}

\subjclass[2010]{Primary 46L05, 46M20; Secondary 46L07}

\keywords{$C^*$-algebra, homogeneous, two-sided multiplication, elementary operator, complex line bundle}

\begin{abstract}
For a $C^*$-algebra $A$ we consider
the problem of when the set $\TM_0(A)$ of all
two-sided multiplications $x \mapsto axb$ $(a,b \in A)$ on $A$ is norm closed,
as a subset of $\mathcal{B}(A)$. We first show that $\TM_0(A)$ is norm closed for all
prime $C^*$-algebras $A$. On the other hand, if $A\cong
\Gamma_0(\CE )$ is an $n$-homogeneous
$C^*$-algebra, where $\CE $ is the canonical $\mathbb{M}_n $-bundle over the primitive spectrum $X$ of $A$,
we show that $\TM_0(A)$ fails to be norm closed
if and only if there exists a $\sigma$-compact open subset $U$ of $X$ and a
phantom complex line subbundle
$\CL $ of $\CE $ over $U$ (i.e.
$\CL $ is not globally trivial, but is trivial on all compact subsets of $U$).
This phenomenon occurs whenever $n \geq 2$ and $X$ is a CW-complex (or a topological manifold) of dimension $3 \leq d<\infty$.
\end{abstract}

\maketitle

\section{Introduction}
\label{sec:intro}

Let $A$ be a $C^*$-algebra and let $\IB(A)$ (resp. $\ICB(A)$) denote
the set of all bounded (resp. completely bounded) maps $\phi : A \to A$ that
preserve (closed two-sided) ideals of $A$ (i.e. $\phi(I)\subseteq I$ for
all ideals $I$ of $A$). The most prominent class of maps $\phi\in \ICB(A) \subset \IB(A)$
are \textit{elementary operators}, i.e. those that can be expressed as
finite sums of \textit{two-sided multiplications} $M_{a,b} : x \mapsto
axb$, where $a$ and $b$ are elements of the multiplier algebra $M(A)$.

Elementary operators play an important role in modern quantum information and quantum computation
theory. In particular, maps $\phi : \IM_n \to \IM_n$ ($\IM_n$ are $n
\times n$ matrices over $\mathbb{C}$) of the form
$\phi=\sum_{i=1}^\ell M_{a_i^*,a_i}$ ($a_i \in \IM_n$ such that $\sum_{i=1}^\ell
a_i^*a_i=1$) represent the (trace-duals of) quantum channels, which
are mathematical models of the evolution of an `open' quantum system
(see e.g. \cite{Kribs2005}). Elementary operators also provide
ways to study the structure of \Cstar-algebras (see \cite{AraMathieuBk}).

Let $\El(A)$, $\TM(A)$ and $\TM_0(A)$ denote, respectively, the
sets of all elementary operators on $A$, two-sided multiplications
on $A$ and two-sided multiplications on $A$ with coefficients
in $A$ (i.e. $\TM_0(A) = \{ M_{a,b}: a, b \in A\}$).

The elementary operators are always dense in $\IB(A)$ in
the topology of pointwise
convergence (by \cite[Corollary 2.3]{Magajna1993PAMS}). However, more
subtle considerations enter in when one asks if $\phi \in \ICB(A)$
can be approximated pointwise by
elementary operators of cb-norm at most $\|\phi\|_{cb}$
(\cite{Magajna2009PAMS} shows that nuclearity of $A$ suffices; see
also \cite{MagajnaBelfast}).

It is an interesting problem to describe those operators $\phi \in
\IB(A)$ (resp. $\phi \in \ICB(A)$) that can be approximated
in operator norm (resp. cb-norm) by elementary operators.
Earlier works, which we cite below, revealed that this is an intricate
question in general, and can involve many and varied properties of $A$
and $\phi$.
In this paper, we show that the apparently much simpler problems of
describing the norm closures of $\TM(A)$ and $\TM_0(A)$ can have
complicated answers even for rather well-behaved \Cstar-algebras.

In some cases $\El(A) = \IB(A)$ (which implies $\El(A)
= \ICB(A)$); or $\El(A)$ is norm dense in
$\IB(A)$; or $\El(A) \subset \ICB(A)$ is dense in cb-norm.
The conditions just mentioned are in fact all equivalent for separable
\Cstar-algebras $A$.
More precisely, Magajna \cite{MagUnif2009} shows that for separable
$C^*$-algebras $A$, the property that $\El(A)$ is norm (resp.
cb-norm) dense in $\IB(A)$ (resp. $\ICB(A)$) characterizes finite
direct sums of homogeneous $C^*$-algebras with the finite type
property. Moreover, in this situation
we already have the equality $\ICB(A)=\IB(A)=\El(A)$.

It can happen that $\El(A)$ is already norm closed (or cb-norm closed).
In \cite{2011GogI,2011GogII}, the first author showed
that for a unital separable $C^*$-algebra $A$, if $\El(A)$ is
norm (or cb-norm)  closed then $A$ is necessarily subhomogeneous,
the homogeneous sub-quotients of $A$ must have the finite type
property and established further necessary conditions on $A$.
In \cite{2011GogII,2013Gog} he gave some partial converse results.

There is a considerable literature on derivations and inner derivations
of \Cstar-algebras. Inner derivations $d_a$
on a \Cstar-algebra $A$,
(i.e. those of the form $d_a(x) = ax-xa$  with $a \in M(A)$) are
important examples of elementary operators.
In \cite[Corollary 4.6]{1993Som} Somerset shows that if $A$ is unital, $\{ d_a : a
\in A\}$
is norm closed if and only if $\mathrm{Orc}(A) < \infty$, where
$\mathrm{Orc}(A)$ is a constant
defined in terms of
a certain graph structure on $\mathrm{Prim}$(A) (the primitive
spectrum of $A$).
If $\mathrm{Orc}(A) = \infty$, the structure of outer derivations
that are norm limits of inner derivations
remains undescribed. In addition, if $A$ is unital and separable, then by \cite[Theorem~5.3]{KLCR1967} and \cite[Corollary~4.6]{1993Som} $\mathrm{Orc}(A) < \infty$ if and only if the set $\{ M_{u,u^*} : u \in A, u \mbox{ unitary}\}$ of inner automorphisms is norm closed.

In \cite{2010Gog,2013Gog} the first author considered the problem of
which derivations on unital $C^*$-algebras $A$ can be cb-norm approximated by
elementary operators. By \cite[Theorem~1.5]{2013Gog} every such a
derivation is necessarily inner in a case when every Glimm ideal
of $A$ is prime. When this fails, it is possible to produce examples
which have outer derivations that are simultaneously elementary
operators (\cite[Example~6.1]{2010Gog}).

While considering derivations $d$ that are elementary operators
and/or norm limits of inner derivations, we realized that they are sometimes expressible in
the form $d = M_{a,b} - M_{b,a}$ even though they are not inner.
We have not been able to decide when all such $d$ are of this form,
but this led us to the seemingly simpler question of considering the
closures of $\TM(A)$ and $\TM_0(A)$. In this paper, we see that
nontrivial considerations enter into these questions about two-sided
multiplications. Of course the left multiplications $\{ M_{a,1} : a \in
M(A)\}$ are already norm closed, as are the right multiplications. So $\TM(A)$
is a small subclass of $\El(A)$, and seems to be the basic case to
study.
\smallskip

This paper is organized as follows.
We begin in \S \ref{sec:Preliminaries} with some generalities and an
explanation that the set of elementary operators of length at most
$\ell$ has the same completion in the operator and cb-norms (for each
$\ell \geq 1$).

In \S \ref {sec:TM-prime}, we show that for a prime \Cstar-algebra $A$, we always have
$\TM(A)$ and $\TM_0(A)$ both norm closed.

In \S \ref {sec:homog}, we recall the description of ($n$-)homogeneous
\Cstar-algebras $A$ as sections $\Gamma_0(\CE)$ of $\IM_n$-bundles $\CE $
over $X= \mathrm{Prim}(A)$ and some general results about $\IB(A)$, $\ICB(A)$
and $\El(A)$ for such $A$.

In \S \ref {sec:Fibrewise}, for
homogeneous \Cstar-algebras $A = \Gamma_0(\CE )$,
we consider subclasses $\IB_1(A)$ and $\IB_{0,1}(A)$
of $\IB(A)$ that seem (respectively) to be the most obvious
choices for the norm closure of $\TM(A)$ and of $\TM_0(A)$, extrapolating
from fibrewise restrictions on $\phi \in \TM(A)$. For each
$\phi \in \IB_1(A)$  we associate
a complex line subbundle $\CL _\phi$ of the
restriction $\CE |_U$ to an open subset $U \subseteq X =
\mathrm{Prim}(A)$,  where $U$ is determined by $\phi$ as the cozero
set of $\phi$ ($U$ identifies the fibres of $\CE $ on
which $\phi$ acts by a nonzero operator).
For separable $A$, the main result of this section
is Theorem~\ref {thm:IB1equi}, where we characterize the condition
$\phi \in \TM(A)$ in terms of triviality of the bundle
$\CL _\phi$.
We close \S \ref{sec:Fibrewise} with Remark~\ref{rem:literature}
comparing our bundle considerations
to slightly similar results in the literature
for innerness of $C(X)$-linear automorphisms when $X$ is compact, or for some
more general unital $A$.

Our final
\S \ref {sec:Closure} is the main section of this paper. For
homogeneous \Cstar-algebras $A = \Gamma_0(\CE )$, we
characterize operators $\phi$ in the norm closure of $\TM_0(A)$ as
those operators in $\IB_{0,1}(A)$ for which the associated complex
line bundle $\CL _\phi$
is trivial on each compact subset of $U$,
where $U$ is as above (Theorem~\ref{thm:CloureTM0A}).
As a consequence, we obtain that $\TM_0(A)$ fails to be norm closed
if and only if there exists a $\sigma$-compact open subset $U$ of
$X$ and a phantom complex line subbundle $\CL $
of $\CE|_U$ (i.e. $\CL $
is not globally trivial, but is trivial on each compact subset of
$U$). Using this and some algebraic topological ideas, we show that
 $\TM(A)$ and $\TM_0(A)$ both fail to be norm closed whenever $A$ is $n$-homogeneous with $n\geq 2$ and $X$ contains an open subset homeomorphic to $\IR^d$ for some $d \geq 3$ (Theorem~\ref{thm:TM-closed}).

\section{Preliminaries}
\label{sec:Preliminaries}
Throughout this paper $A$ will denote a $C^*$-algebra. By an \textit{ideal} of $A$ we always mean a closed two-sided ideal. As usual, by $Z(A)$ we denote the centre of $A$, by $M(A)$ the \textit{multiplier algebra} of $A$, and by $\mathrm{Prim}(A)$ the \textit{primitive spectrum} of $A$ (i.e. the set of kernels of all irreducible representations of $A$ equipped with the Jacobson topology).

\smallskip

Every $\phi \in \IB(A)$  is linear over $Z(M(A))$ and,
for any ideal $I$ of $A$, $\phi$ induces a map
\begin{equation}\label{eq:phiI}
\phi_I: A/I \to A/I, \quad \mbox{which sends} \quad a+I \mbox{ to } \phi(a)+I.
\end{equation}

It is easy to see that the norm (resp. cb-norm) of an operator $\phi
\in \IB(A)$ (resp. $\phi \in \ICB(A)$) can be computed via the formulae
\begin{align}\label{eq:normformula}
	\|\phi\|&=\sup\{\|\phi_P\| \ : \ P \in \mathrm{Prim}(A)\}
\quad \mbox{resp.}
\nonumber\\
\quad \|\phi\|_{cb}&=\sup\{\|\phi_P\|_{cb} \ : \ P \in \mathrm{Prim}(A)\}.
	\end{align}

\smallskip

 The \emph{length} of a non-zero elementary operator $\phi\in \El(A)$
 is the smallest positive integer $\ell=\ell(\phi)$ such that $\phi=\sum_{i=1}^\ell M_{a_i,b_i}$ for some $a_i,b_i \in M(A)$. We also define $\ell(0)=0$. We write $\El_\ell(A)$ for the
	elementary operators of length at most $\ell$. Thus $\El_1 (A)=\TM(A)$.

We will also consider the following subsets of $\TM(A)$:
\begin{eqnarray}
\TM_{cp}(A)&=&\{M_{a,a^*} : a \in M(A)\},\nonumber \\
	\mathrm{InnAut_{alg}}(A)&=&\{M_{a, a^{-1}} : a \in M(A), \ a \ \mathrm{invertible}\},  \mbox{ and}\nonumber \\
\mathrm{InnAut}(A)&=&\{M_{u,u^*} : u \in M(A), \ u \ \mathrm{unitary}\}
	\label{eqn:extraTMnotation}
\end{eqnarray}
(where cp and alg signify, respectively, completely positive and algebraic). Note that $\mathrm{InnAut}(A)=\TM_{cp}(A)\cap \mathrm{InnAut_{alg}}(A)$.

It is well known that elementary operators are completely bounded with the following estimate for their cb-norm:
\begin{equation}\label{eq:Haagest}
\left\|\sum_{i} M_{a_i,b_i}\right\|_{cb}\leq \left\|\sum_i a_i \otimes b_i\right\|_h,
\end{equation}
where $\|\cdot\|_h$ is the Haagerup tensor norm on the algebraic
tensor product $M(A) \otimes M(A)$, i.e.
\[
\|u\|_h = \inf \left\{\left\|\sum_{i} a_ia_i^*\right\|^{\frac{1}{2}}\left\|\sum_{i} b_i^*b_i\right\|^{\frac{1}{2}} \ : \ u=\sum_{i} a_i \otimes b_i\right\}.
\]

By inequality (\ref{eq:Haagest}) the mapping
\[
(M(A) \otimes M(A), \|\cdot\|_h) \to (\El(A), \|\cdot\|_{cb}) \quad \mbox{given by} \quad \sum_{i} a_i \otimes b_i \mapsto \sum_{i} M_{a_i,b_i}.
\]
defines a well-defined contraction. Its continuous extension to the completed Haagerup tensor product $M(A) \otimes_h M(A)$ is known as a \emph{canonical contraction} from $M(A) \otimes_h M(A)$ to $\mathrm{ICB}(A)$ and is denoted by $\Theta_A$.

We have the following result (see \cite[Proposition 5.4.11]{AraMathieuBk}):

\begin{theorem}[Mathieu]\label{thm:Mathieu}
$\Theta_A$ is isometric if and only if $A$ is a prime $C^*$-algebra.
\end{theorem}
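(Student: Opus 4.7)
My plan is to handle the two directions separately, using an easy algebraic construction for ``$\Theta_A$ isometric implies $A$ prime'' and a reduction to Haagerup's complete isometry theorem for $B(H)$ for the harder direction.

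For the contrapositive of the ``only if'' direction, assume $A$ is not prime, so there exist nonzero ideals $I,J$ of $A$ with $IJ=\{0\}$; pick nonzero $a\in I$, $b\in J$. Then for every $x\in A$ one has $axb\in IAJ\subseteq IJ=\{0\}$, hence $\Theta_A(a\otimes b)=M_{a,b}=0$. On the other hand the simple-tensor Haagerup norm satisfies $\|a\otimes b\|_h=\|a\|\cdot\|b\|>0$: the upper bound is the one-term decomposition in the infimum defining $\|\cdot\|_h$, while the lower bound follows from $\|\cdot\|_h\geq\|\cdot\|_{\min}$ together with $\|a\otimes b\|_{\min}=\|a\|\|b\|$. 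Hence $\ker\Theta_A\neq\{0\}$, and $\Theta_A$ is not isometric.

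For the forward direction, inequality (\ref{eq:Haagest}) already gives $\|\Theta_A(u)\|_{cb}\leq\|u\|_h$, so I only need the reverse. I would reduce to $B(H)$ in three layers. First, a prime $C^*$-algebra embeds isometrically into a primitive one: the local multiplier algebra $M_{\mathrm{loc}}(A)$ is primitive when $A$ is prime, and by the injectivity of the Haagerup tensor product for operator spaces, the inclusion $M(A)\subseteq M_{\mathrm{loc}}(A)$ induces an isometric embedding of $M(A)\otimes_h M(A)$. So I may assume $A$ is primitive, with a faithful irreducible representation $\pi:A\to B(H)$ that extends to a faithful unital $\tilde\pi:M(A)\to B(H)$. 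Second, injectivity of $\otimes_h$ again gives that $\tilde\pi\otimes\tilde\pi:M(A)\otimes_h M(A)\to B(H)\otimes_h B(H)$ is isometric. Third, invoke Haagerup's theorem that the canonical map $\Theta_{B(H)}:B(H)\otimes_h B(H)\to\mathrm{CB}(B(H))$ is a complete isometry; this yields $\|u\|_h=\|\Theta_{B(H)}((\tilde\pi\otimes\tilde\pi)(u))\|_{cb}$ computed on $B(H)$. Finally, since $\pi(A)$ is $\sigma$-weakly dense in $B(H)$ (by Kaplansky density together with irreducibility) and left/right multiplications are normal, this cb-norm agrees with that of its restriction to $A$, namely $\|\Theta_A(u)\|_{cb}$.

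The principal obstacle is the deep analytic input from Haagerup's theorem on $B(H)$, which is typically proved via a Wittstock--Paulsen representation theorem for completely bounded bilinear forms. A secondary technical point is the prime-to-primitive reduction via $M_{\mathrm{loc}}(A)$, which is genuine content in the non-separable case (every separable prime $C^*$-algebra is already primitive, so the reduction is trivial there).
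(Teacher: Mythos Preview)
The paper does not give its own proof of this theorem; it simply states the result and cites \cite[Proposition~5.4.11]{AraMathieuBk}. So there is no in-paper argument to compare against beyond the reference.

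Your ``only if'' direction is correct and is the standard argument. Your ``if'' direction is also correct and standard \emph{for primitive $A$}: embed $M(A)$ into $B(H)$ via the faithful irreducible representation, use injectivity of $\otimes_h$, invoke Haagerup's complete isometry $B(H)\otimes_h B(H)\to \mathrm{CB}(B(H))$, and pass the cb-norm back to $A$ by Kaplansky density plus normality of two-sided multiplications. Since separable prime $C^*$-algebras are primitive, this already handles the separable case cleanly.

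The gap is in your prime-to-primitive reduction in the non-separable case. Two issues:
\begin{enumerate}
\item The claim that $M_{\mathrm{loc}}(A)$ is \emph{primitive} whenever $A$ is prime is not standard and, to my knowledge, not established. What one knows is that $M_{\mathrm{loc}}(A)$ is prime with centre $\IC$ (the extended centroid is trivial); primitivity is strictly stronger, and Weaver's prime non-primitive examples show the distinction is genuine.
\item Even granting primitivity of $B:=M_{\mathrm{loc}}(A)$, you would then be computing the cb-norm of the elementary operator acting on $B$, not on $A$. To transfer this back you would want $A$ to sit as an (essential) ideal in $B$, but $A$ is not in general an ideal of $M_{\mathrm{loc}}(A)$ (it is an ideal of $M(A)$, but the passage to the direct limit over $M(I)$ for essential $I\subseteq A$ breaks this), so the restriction argument is not automatic.
\end{enumerate}
The proof in Ara--Mathieu avoids this reduction altogether; it proceeds through the machinery of the bounded central closure and the central Haagerup tensor product, exploiting directly that the extended centroid of a prime $C^*$-algebra is $\IC$, rather than by embedding into a primitive algebra.
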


The next result is a combination of
\cite[Corollary 3.8]{2003-TimoneyIJM}, (\ref{eq:normformula}) and the facts that
for $\phi = \sum_{i=1}^\ell M_{a_i, b_i}$, we have $\|\phi_\pi\|=\|\phi_{\ker \pi}\|$ and  $\|\phi_\pi\|_{cb}=\|\phi_{\ker \pi}\|_{cb}$
where for irreducible representation $\pi \colon A \to \mathcal{B}(H_\pi)$, $\phi_\pi = \sum_{i=1}^\ell M_{\pi(a_i),
\pi(b_i)} \in \El ( \mathcal{B}(H_\pi))$ (as in \cite[\S4]{2003-TimoneyIJM}).

\begin{theorem}[Timoney]\label{thm:Tim03-07} For $A$ a \Cstar-algebra and
	arbitrary $\phi \in \El(A)$ of length $\ell$ we have
$$\|\phi\|_{cb}=\|\phi^{(\ell)}\| \leq \sqrt{\ell}\|\phi\|,$$
where $\phi^{(\ell)}$ denotes the $\ell$-th amplification of $\phi$ on $M_\ell(A)$, $\phi^{(\ell)} : [x_{i,j}] \mapsto [\phi(x_{i,j})]$.

In particular, on each $\El_\ell(A)$ the metric induced by the
cb-norm is equivalent to the metric induced by the operator norm.
\end{theorem}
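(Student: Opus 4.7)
The strategy is to reduce the statement to the case $A=\mathcal{B}(H)$, which is handled by Corollary~3.8 of \cite{2003-TimoneyIJM}, and then reassemble the global norm via the formula (\ref{eq:normformula}). First I would observe that if $\phi=\sum_{i=1}^\ell M_{a_i,b_i}$ has length $\ell$, then for any irreducible representation $\pi \colon A \to \mathcal{B}(H_\pi)$, the induced map $\phi_\pi = \sum_{i=1}^\ell M_{\pi(a_i),\pi(b_i)}$ is an elementary operator on $\mathcal{B}(H_\pi)$ of length at most $\ell$, and its $\ell$-th amplification is exactly the representation induced by $\phi^{(\ell)}$ on $M_\ell(A)$ at the corresponding primitive ideal $\ker\pi$ (under the canonical identification $M_\ell(A)/M_\ell(\ker\pi) \cong M_\ell(A/\ker\pi)$).

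The content then lies in Timoney's Corollary~3.8, which asserts the key equality $\|\psi\|_{cb}=\|\psi^{(\ell)}\|$ together with the estimate $\|\psi^{(\ell)}\| \leq \sqrt{\ell}\,\|\psi\|$ for an elementary operator $\psi$ of length $\ell$ acting on $\mathcal{B}(H)$. Applying this to each $\phi_\pi$ yields
\[
\|\phi_\pi\|_{cb} = \|\phi_\pi^{(\ell)}\| \leq \sqrt{\ell}\,\|\phi_\pi\|
\]
for every irreducible $\pi$. Taking the supremum over $\pi$ (equivalently over $P=\ker\pi \in \mathrm{Prim}(A)$), using (\ref{eq:normformula}) twice and the cited identities $\|\phi_\pi\|=\|\phi_{\ker\pi}\|$ and $\|\phi_\pi\|_{cb}=\|\phi_{\ker\pi}\|_{cb}$, delivers
\[
\|\phi\|_{cb} = \sup_\pi \|\phi_\pi\|_{cb} = \sup_\pi \|\phi_\pi^{(\ell)}\| = \|\phi^{(\ell)}\| \leq \sqrt{\ell}\,\|\phi\|,
\]
as required.

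For the final assertion, I would simply note that if $\phi,\psi\in\El_\ell(A)$ then $\phi-\psi$ lies in $\El_{2\ell}(A)$, so applying the main estimate to $\phi-\psi$ gives
\[
\|\phi-\psi\| \leq \|\phi-\psi\|_{cb} \leq \sqrt{2\ell}\,\|\phi-\psi\|,
\]
which is exactly the equivalence of the two metrics on $\El_\ell(A)$.

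The proof is essentially bookkeeping: the only nontrivial ingredient is the fibrewise bound from \cite[Corollary 3.8]{2003-TimoneyIJM}, and the main care needed is in verifying that amplification commutes with passage to quotients by primitive ideals, so that the supremum defining $\|\phi^{(\ell)}\|$ over $\mathrm{Prim}(M_\ell(A))$ matches the supremum of $\|\phi_\pi^{(\ell)}\|$ over $\mathrm{Prim}(A)$. This is standard but is the only place where one has to be attentive to notation.
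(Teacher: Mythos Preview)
Your argument is correct and matches the paper's own justification: the paper does not give a formal proof but states (just before the theorem) that the result is a combination of \cite[Corollary~3.8]{2003-TimoneyIJM}, formula~(\ref{eq:normformula}), and the identities $\|\phi_\pi\|=\|\phi_{\ker\pi}\|$, $\|\phi_\pi\|_{cb}=\|\phi_{\ker\pi}\|_{cb}$, which is exactly your reduction-to-$\mathcal{B}(H_\pi)$ strategy. The only small point worth making explicit is that $\phi_\pi$ may have length strictly less than $\ell$, but then $\|\phi_\pi\|_{cb}=\|\phi_\pi^{(m)}\|\leq\|\phi_\pi^{(\ell)}\|\leq\|\phi_\pi\|_{cb}$ still forces equality, so nothing is lost.
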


\section{Two-sided multiplications on prime \Cstar-algebras}
\label{sec:TM-prime}

If $A$ is a prime $C^*$-algebra, we prove here (Theorem~\ref
{thm:setMab-closed}) that $\TM(A)$ and
$\TM_0(A)$ must be closed in $\mathcal{B}(A)$.

The crucial step is the following lemma.

\begin{lemma}
	\label{lem:approxHaagerup}
Let $a,b,c$ and $d$ be norm-one elements of an operator space $V$. If
\[\|a \otimes b-c \otimes d\|_h<\varepsilon \leq 1/3,\] then there exists a complex number $\mu$ of modulus one such that
\[\max\{\|a-\mu c\|, \|b-\overline{\mu}d\|\}<6 \varepsilon.\]
\end{lemma}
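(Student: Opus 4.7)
My plan is to prove the lemma by a Hahn--Banach slicing argument. First, I would use the Hahn--Banach theorem to choose norm-one functionals $\phi,\psi\in V^*$ with $\phi(a)=1$ and $\psi(b)=1$; this is possible since $\|a\|=\|b\|=1$. The key technical input is the standard operator-space fact that every bounded linear functional on $V$ has cb-norm equal to its operator norm, from which one obtains that the slice maps $\phi\otimes\id,\;\id\otimes\psi:V\otimes_h V\to V$ and $\phi\otimes\psi:V\otimes_h V\to\mathbb{C}$ are bounded by the (ordinary) norms of $\phi$ and $\psi$, i.e.\ contractive in this case.

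Next, I would fix a decomposition $a\otimes b - c\otimes d = \sum_{i=1}^n x_i\otimes y_i$ with $\bigl\|\sum_i x_ix_i^*\bigr\|^{1/2}\bigl\|\sum_i y_i^*y_i\bigr\|^{1/2}<\varepsilon$, and apply these three slice maps to both sides. This yields
\[
\|b-\phi(c)d\|<\varepsilon,\qquad \|a-\psi(d)c\|<\varepsilon,\qquad |1-\phi(c)\psi(d)|<\varepsilon.
\]
Since $\|b\|=\|d\|=1$, the first forces $|\phi(c)|\ge 1-\varepsilon$, and symmetrically $|\psi(d)|\ge 1-\varepsilon\ge 2/3$ using $\varepsilon\le 1/3$.

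The final step is to normalise. Setting $\lambda=\phi(c)$, $\tau=\psi(d)$, and $\mu:=\tau/|\tau|$, I have $|\mu|=1$ and $|\mu-\tau|=1-|\tau|\le\varepsilon$. Writing $\lambda\tau=1+\delta$ with $|\delta|<\varepsilon$, a direct computation gives
\[
|\bar\mu-\lambda| \;=\; \frac{\bigl|\,1+\delta-|\tau|\,\bigr|}{|\tau|} \;<\; \frac{2\varepsilon}{1-\varepsilon} \;\le\; 3\varepsilon,
\]
so combining with the two norm estimates above yields
\[
\|a-\mu c\| \le \|a-\tau c\|+|\mu-\tau| < 2\varepsilon, \qquad \|b-\bar\mu d\| \le \|b-\lambda d\|+|\lambda-\bar\mu| < 4\varepsilon,
\]
both comfortably within the claimed bound of $6\varepsilon$.

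The only mildly delicate ingredient I anticipate is the operator-space identity $\|\phi\|_{cb}=\|\phi\|$ for bounded linear functionals (and the resulting contractivity of the one- and two-sided slice maps on the Haagerup tensor product), which is standard but worth invoking explicitly; once that is in hand, everything else reduces to Hahn--Banach together with elementary complex-number arithmetic.
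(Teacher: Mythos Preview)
Your argument is correct and takes a genuinely different route from the paper's proof. The paper first disposes of the degenerate (linearly dependent) cases and then, observing that $a\otimes b-c\otimes d$ has rank~$2$, invokes the Chatterjee--Smith lemma to produce an invertible $S\in\mathbb{M}_2$ with $\|[a\ -c]S\|<\varepsilon$ and $\|S^{-1}[b\ d]^t\|<\varepsilon$; it then runs a four-way case analysis on which entry of $S$ or $S^{-1}$ has modulus at least $1/\sqrt{2}$, extracting the unimodular $\mu$ and the constant~$6$ from explicit arithmetic in each case. Your approach sidesteps both the rank-$2$ factorisation and the case split: Hahn--Banach plus the standard facts $\|\phi\|_{cb}=\|\phi\|$ for functionals and functoriality of $\otimes_h$ immediately give the three scalar/vector inequalities, after which only elementary modulus estimates remain. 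As a bonus your method yields the sharper bounds $\|a-\mu c\|<2\varepsilon$ and $\|b-\bar\mu d\|<4\varepsilon$, and it is entirely self-contained (no external lemma needed). The paper's approach, on the other hand, hints at how one might attack the higher-rank question posed immediately after the lemma, where slicing by single functionals would not obviously suffice.
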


\begin{proof}
	First we dispose of the simpler cases where $a$ and $c$ are
	linearly dependent or where $b$ and $d$ are linearly
	dependent.
	If $a$ and $c$ are dependent then $a = \mu c$ with $|\mu| = 1$. So
$a \otimes b - c \otimes d = c \otimes (\mu b-d)$ and $\|a \otimes b - c \otimes d\|_h = \|\mu b- d\| <
	\varepsilon < 6 \varepsilon$.
	Similarly if $b$ and $d$ are dependent, $b =
	\bar\mu d$ with $|\mu| = 1$ and $\|\bar\mu a -c\| < \varepsilon $.

	Leaving aside these cases, $a \otimes b - c \otimes d$ is a tensor of rank $2$.
	By \cite[Lemma 2.3]{1993ChatterjeeSmith}
	there is an invertible matrix $S \in \mathbb{M}_2$ such that
	\[
	\|\begin{bmatrix}a & -c \end{bmatrix}
		S\| < \varepsilon \quad
		\mbox{and} \quad
	\left\| S^{-1} \begin{bmatrix} b \\ d \end{bmatrix} \right\| <
	  \varepsilon.
	\]

Write $\alpha_{i,j}$ for the ${i,j}$ entry of $S$ and $\beta_{i,j}$ for the
${i,j}$ entry of $S^{-1}$.

Since $\alpha_{1,1} \beta_{1,1} + \alpha_{1,2} \beta_{2,1} =1$, at least one of the
absolute values $|\alpha_{1,1}|$, $| \beta_{1,1}|$, $|\alpha_{1,2}|$ or $|\beta_{2,1}|$
must be at least $1/\sqrt{2}$. We treat the four cases separately,
by very similar arguments.

\begin{description}
	\item[$|\alpha_{1,1}| \geq 1/ \sqrt{2}$]
		Now from
		\[
		\begin{bmatrix}a & -c \end{bmatrix}
			S = \begin{bmatrix}
				\alpha_{1,1} a -\alpha_{2,1} c &
				\alpha_{1,2} a -\alpha_{2,2} c
			\end{bmatrix}
		\]
		we have $\|  \alpha_{1,1} a -\alpha_{2,1} c\|  <
		\varepsilon$, so
		\[
		\left \| a - \frac{\alpha_{2,1}}{\alpha_{1,1} } c \right\|
		< \frac{\varepsilon}{|\alpha_{1,1}|}
		\leq  \sqrt{2} \varepsilon.
	\]
	Let $\lambda = \alpha_{2,1}/\alpha_{1,1}$.
	Then $$| 1 - |\lambda||=|\|a\| - |\lambda| \|c\| |\leq \|a-\lambda c\| \leq  \sqrt{2} \varepsilon,$$
	and so $ |\lambda| \in [ 1 - \sqrt{2} \varepsilon, 1+ \sqrt{2} \varepsilon]$.
	Also
	\[
		\left | \frac{\lambda}{|\lambda|} - \lambda
		\right|
		= |\lambda|\frac{|1 - |\lambda||}{|\lambda|}
		\leq (1 + \sqrt{2} \varepsilon)\frac{\sqrt{2}
		\varepsilon}{ 1 - \sqrt{2} \varepsilon} < (\sqrt{2}+2) \varepsilon
	\]
	(since $\varepsilon \leq 1/3$).
	So for $\mu =
	\frac{\lambda}{|\lambda|}$ we have $|\mu| = 1$ and
	\[
		\|a - \mu c\| \leq \|a - \lambda c\| + |\lambda - \mu|
		\|c\| < \sqrt{2} \varepsilon + (\sqrt{2}+2) \varepsilon < 5 \varepsilon.
	\]
	Then
	\begin{eqnarray*}
		a\otimes b - c \otimes d
		&=& a\otimes b - (\mu c \otimes \bar{\mu}d)
		\\
		&=& (a\otimes b) - (a \otimes \bar{\mu}d)
		+ (a\otimes \bar{\mu}d) - (\mu c \otimes \bar{\mu}d)
		\\
		&=& (a \otimes (b - \bar{\mu}d)) + ((a- \mu c)\otimes \bar{\mu} d)
	\end{eqnarray*}
	and thus
	\begin{eqnarray*}
		\| b - \bar{\mu} d \| &=& \| a \otimes (b - \bar{\mu} d) \|_h
		\\
		&\leq& \| a \otimes b - c \otimes d \|_h + \| (a- \mu c) \otimes  \bar{\mu} d\|_h
		\\
		&<& \varepsilon + 5 \varepsilon
		= 6 \varepsilon.
	\end{eqnarray*}

	\item[$|\alpha_{1,2}| \geq 1/ \sqrt{2}$]
		We start now with
		$\| \alpha_{1,2} a - \alpha_{2,2} c\| <
		\varepsilon$ and proceed in the same way (with
		$\lambda = \alpha_{2,2}/\alpha_{1,2}$).

	\item[$|\beta_{1,1}| \geq 1/ \sqrt{2}$]
		In this case we use
		\[
		S^{-1} \begin{bmatrix} b \\ d \end{bmatrix} =
			\begin{bmatrix} \beta_{1,1} b + \beta_{1,2} d\\
			\beta_{2,1} b + \beta_{2,2} d \end{bmatrix}
		\]
		and $\|\beta_{1,1} b + \beta_{1,2} d\| <  \varepsilon$,
		leading to a similar argument (with $\lambda=
		-\beta_{1,2}/\beta_{1,1}$ and $b$ taking the role of $a$).

	\item[$|\beta_{2,1}| \geq 1/ \sqrt{2}$]
		In this case use
		$\| \beta_{2,1} b + \beta_{2,2} d \| < \varepsilon$.
		\qedhere
\end{description}
\end{proof}

\begin{corollary}
	\label{cor:approxMabMcd}
	If $V$ is an operator space, the set
	$
S_1=\{a \otimes b : a,b \in V\}
$
of all elementary tensors forms a closed subset of $V \otimes_h V$.
\end{corollary}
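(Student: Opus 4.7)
The plan is to take an arbitrary convergent sequence $u_n = a_n \otimes b_n \to T$ in $V \otimes_h V$ and exhibit $T$ as an elementary tensor by normalizing and then applying Lemma~\ref{lem:approxHaagerup}. Since that lemma only handles four unit vectors whose tensors differ by less than $1/3$ in Haagerup norm, most of the work will consist of arranging for its hypothesis to hold.

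First I would dispose of the case $T = 0$, for which $T = 0 \otimes 0 \in S_1$. Otherwise, using the cross-norm property of the Haagerup norm (i.e.\ $\|a \otimes b\|_h = \|a\|\|b\|$), the scalars $c_n := \|a_n\|\|b_n\| = \|u_n\|_h$ converge to $\|T\|_h > 0$, so after dropping finitely many indices I may normalize by setting $\hat{a}_n := a_n/\|a_n\|$ and $\hat{b}_n := b_n/\|b_n\|$. The identity
\[
\hat{a}_n \otimes \hat{b}_n - \hat{a}_m \otimes \hat{b}_m = \frac{1}{c_n}(u_n - u_m) + \Bigl(\frac{1}{c_n} - \frac{1}{c_m}\Bigr) u_m
\]
combined with a routine Haagerup-norm estimate shows that $(\hat{a}_n \otimes \hat{b}_n)$ is itself Cauchy. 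I would then fix $N$ large enough that $\varepsilon_n := \|\hat{a}_n \otimes \hat{b}_n - \hat{a}_N \otimes \hat{b}_N\|_h < 1/3$ for all $n \geq N$ (with $\varepsilon_n \to 0$), and apply Lemma~\ref{lem:approxHaagerup} to $(\hat{a}_n, \hat{b}_n, \hat{a}_N, \hat{b}_N)$ to obtain unimodular scalars $\mu_n \in \IC$ satisfying $\|\hat{a}_n - \mu_n \hat{a}_N\| < 6\varepsilon_n$ and $\|\hat{b}_n - \overline{\mu_n} \hat{b}_N\| < 6\varepsilon_n$.

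Finally I would absorb the phase: with $a_n' := \overline{\mu_n} \hat{a}_n$ and $b_n' := \mu_n \hat{b}_n$, one has $a_n' \otimes b_n' = \hat{a}_n \otimes \hat{b}_n$ because $|\mu_n| = 1$, while $a_n' \to \hat{a}_N$ and $b_n' \to \hat{b}_N$ in $V$. Joint continuity of $\otimes\colon V \times V \to V \otimes_h V$ (again using that $\|\cdot\|_h$ is a cross norm) then gives $\hat{a}_n \otimes \hat{b}_n \to \hat{a}_N \otimes \hat{b}_N$, and multiplying through by $c_n \to \|T\|_h$ yields $T = \bigl(\|T\|_h\, \hat{a}_N\bigr) \otimes \hat{b}_N \in S_1$. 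The main obstacle is procedural rather than conceptual: the argument requires simultaneously normalizing to unit vectors, enforcing the $\varepsilon < 1/3$ bound, and absorbing the phase $\mu_n$ coherently into the two tensor slots. Once this bookkeeping is in place, Lemma~\ref{lem:approxHaagerup} together with the cross-norm property of $\|\cdot\|_h$ does all the real work.
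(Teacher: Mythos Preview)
Your argument contains a genuine gap at the parenthetical claim ``(with $\varepsilon_n \to 0$)''. The Cauchy property of $(\hat a_n \otimes \hat b_n)$ only gives you, for each $\delta>0$, an index $M$ with $\|\hat a_n\otimes\hat b_n-\hat a_m\otimes\hat b_m\|_h<\delta$ for all $n,m\geq M$; it does \emph{not} say that for a \emph{fixed} $N$ the differences $\varepsilon_n=\|\hat a_n\otimes\hat b_n-\hat a_N\otimes\hat b_N\|_h$ tend to zero. In fact $\varepsilon_n\to\|S-\hat a_N\otimes\hat b_N\|_h$, where $S$ is the limit of the sequence, and this is typically nonzero. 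Concretely, take $V=\IC^2$, $a_n=e_1+\tfrac1n e_2$, $b_n=e_1$; then $\hat a_n\otimes\hat b_n\to e_1\otimes e_1$, but for every fixed $N$ one has $\hat a_N\otimes\hat b_N\neq e_1\otimes e_1$, so your conclusion $T=\|T\|_h\,\hat a_N\otimes\hat b_N$ is simply false here. The limit need not be any term of the sequence.

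The paper avoids this by comparing \emph{consecutive} terms rather than anchoring at a single $N$. After normalizing and passing to a subsequence with $\|a_n\otimes b_n-a_{n+1}\otimes b_{n+1}\|_h\leq 1/(6\cdot 2^n)$, one applies Lemma~\ref{lem:approxHaagerup} at each step to choose unimodular scalars inductively, producing $a_n',b_n'$ with $a_n'\otimes b_n'=a_n\otimes b_n$ and $\|a_n'-a_{n+1}'\|,\|b_n'-b_{n+1}'\|\leq 2^{-n}$. These sequences are then Cauchy in $V$, and their limits $a,b$ give $u=a\otimes b$. Your normalization and use of the lemma are fine; what is missing is precisely this telescoping step that converts control of successive differences into genuine convergence of the (phase-adjusted) factors.
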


\begin{proof}
Suppose $a_n \otimes b_n \to u \in V \otimes_h V$ (for $a_n, b_n \in V$).
If $u = 0$, certainly $u \in S_1$ and otherwise we may
assume $\|u\|_h =1$ and also that
\[
	\|a_n \otimes b_n\|_{h} = 1 =
\|a_n\| = \|b_n\| \quad (n \geq 1).
\]
Passing to a subsequence, we may suppose
\[
\|a_n \otimes b_n - a_{n+1}
\otimes b_{n+1}\|_h \leq \frac{1}{6 \cdot 2^n} \quad (n \geq 1).
\]
By Lemma \ref{lem:approxHaagerup}, we may multiply $a_n$ and $b_n$
by complex conjugate modulus one scalars chosen
inductively to get $a'_n$ and $b'_n$ such that
\[
	a_n \otimes b_n  = a'_n \otimes b'_n,
	\quad
\|a'_n - a'_{n+1}\| \leq 1/2^n \quad
\mbox{and} \quad
\|b'_n - b'_{n+1}\| \leq 1/2^n
\quad (n \geq 1).\]
In this way we find $a = \lim_{n \to
\infty} a'_n$ and $b = \lim_{n \to \infty} b'_n$ in $V$ with $u
= a \otimes b \in S_1$.
\end{proof}

\begin{question}
If $V$ is an operator space and $\ell >1$, is the set
\[
S_\ell = \left\{\sum_{i=1}^\ell a_i \otimes b_i : a_i,b_i \in V \right\}
\]
of all tensors of rank at most $\ell$ closed in $V \otimes_h V$?
In particular, can we extend Lemma \ref{lem:approxHaagerup}
as follows.

Let  $V$ be an operator space and let $\textbf{a} \odot \textbf{b}$ and $\textbf{c} \odot \textbf{d}$ be two norm-one tensors of the same
(finite) rank $\ell$ in $V \otimes_h V$, where $\textbf{a},\textbf{c}$
and $\textbf{b},\textbf{d}$ are, respectively, $1\times \ell$ and $\ell \times 1$ matrices with entries in $V$. Suppose that $\|\textbf{a} \odot
\textbf{b}-\textbf{c} \odot \textbf{d}\|_h < \varepsilon$ for some $\varepsilon >0$. Can we find absolute constants $C$ and $\delta$ (which depend only on $\ell$ and $\varepsilon$) so that $\delta \rightarrow 0$ as $\varepsilon \rightarrow 0$ with the following property:

There exists an invertible matrix $S\in \mathbb{M}_\ell$ such that
\[
\|S\|, \|S^{-1}\|\leq C, \quad  \|\textbf{a}S^{-1}-\textbf{c}\|<\delta \quad \mbox{and} \quad \|S\textbf{b}-\textbf{d}\|<\delta?
\]

\end{question}

\begin{theorem}\label{thm:setMab-closed}
	If $A$ is a prime $C^*$-algebra, then both $\TM(A)$ and $\TM_0(A)$
	are norm closed.
\end{theorem}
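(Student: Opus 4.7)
The plan is to combine three earlier ingredients: Mathieu's Theorem \ref{thm:Mathieu} that $\Theta_A$ is isometric when $A$ is prime; Timoney's Theorem \ref{thm:Tim03-07} giving equivalence of operator norm and cb-norm on the length-one space $\El_1(A) = \TM(A)$; and Corollary \ref{cor:approxMabMcd} saying that elementary tensors form a closed subset of the Haagerup tensor product of any operator space.

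First I would take a sequence $\phi_n = M_{a_n,b_n}$ in $\TM(A)$ converging in operator norm to some $\phi \in \mathcal{B}(A)$. By Theorem \ref{thm:Tim03-07}, the operator norm and the cb-norm are equivalent on $\El_1(A)$, so $(\phi_n)$ is also Cauchy in cb-norm. Since $\ICB(A)$ is complete in cb-norm, the sequence converges in cb-norm to some $\tilde\phi \in \ICB(A)$; as cb-convergence dominates operator-norm convergence, $\tilde\phi = \phi$, and in particular $\phi \in \ICB(A)$.

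Next, by Mathieu's theorem $\Theta_A \colon M(A)\otimes_h M(A) \to \ICB(A)$ is isometric, so the cb-Cauchy property of $\Theta_A(a_n \otimes b_n) = \phi_n$ translates into $(a_n \otimes b_n)$ being Cauchy in $M(A) \otimes_h M(A)$. Let $u$ be its Haagerup-norm limit; then $\Theta_A(u) = \phi$ by continuity of $\Theta_A$. By Corollary \ref{cor:approxMabMcd} applied to $V = M(A)$, this limit $u$ is itself an elementary tensor $a \otimes b$ for some $a,b \in M(A)$, which yields $\phi = \Theta_A(a \otimes b) = M_{a,b} \in \TM(A)$.

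For $\TM_0(A)$ I would run the same argument with $V = A$, using injectivity of the Haagerup tensor product: $A \otimes_h A$ embeds isometrically (and hence as a closed subspace) in $M(A) \otimes_h M(A)$, so a Cauchy sequence $(a_n \otimes b_n)$ with $a_n, b_n \in A$ has its limit $u$ already in $A \otimes_h A$, and Corollary \ref{cor:approxMabMcd} applied to $V = A$ then produces $a,b \in A$ with $u = a \otimes b$. There is no real obstacle: with Mathieu's theorem available, the operator-norm closure of $\TM(A)$ is literally the image under $\Theta_A$ of the Haagerup-norm closure of rank-one tensors, and Corollary \ref{cor:approxMabMcd} says the latter is already closed. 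The only care needed is the Timoney step that bridges from operator-norm to cb-norm convergence so that Mathieu's isometry can be invoked.
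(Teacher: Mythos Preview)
Your proof is correct and follows essentially the same route as the paper: Timoney's theorem to pass from operator norm to cb-norm, Mathieu's isometry to transfer the problem to $M(A)\otimes_h M(A)$ (resp.\ $A\otimes_h A$), and Corollary~\ref{cor:approxMabMcd} to conclude. The paper phrases this more tersely as ``$\TM(A)=\Theta(S_1)$ is closed'' rather than via an explicit sequential argument, but the content is the same. One small wording point: the equivalence of norms you invoke to pass from operator-norm Cauchy to cb-norm Cauchy concerns the differences $\phi_n-\phi_m$, which lie in $\El_2(A)$ rather than $\El_1(A)$; Theorem~\ref{thm:Tim03-07} still covers this, so the argument stands.
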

\begin{proof}
	By Theorem \ref{thm:Tim03-07} we may work with the cb-norm
	instead of the (operator) norm. Since $A$ is prime, by Mathieu's theorem (Theorem \ref{thm:Mathieu}) the
canonical map $\Theta : M(A) \otimes_h M(A) \to \ICB(A)$, $\Theta :a \otimes b \mapsto M_{a,b}$,
is isometric. By Corollary~\ref{cor:approxMabMcd}, the set $S_1$ of all elementary tensors in $M(A) \otimes_h M(A)$ is closed
in the Haagerup norm. Therefore, $\TM(A)=\Theta(S)$ is closed in the cb-norm.

For the case of $\TM_0(A)$, we use the same argument but work with the
restriction of $\Theta$ to $A \otimes_h A$.
\end{proof}

\begin{corollary}
	\label{cor:aut-prime}
	If $A$ is a prime $C^*$-algebra, then the sets $\TM_{cp}(A)$, $\mathrm{InnAut_{alg}}(A)$, and $\mathrm{InnAut}(A)$ (see (\ref
	{eqn:extraTMnotation}))
	are all norm closed.
\end{corollary}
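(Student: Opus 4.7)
The plan is to deduce all three statements from Theorem~\ref{thm:setMab-closed} combined with the constructive argument inside the proof of Corollary~\ref{cor:approxMabMcd}, which recovers the factors of a limiting elementary tensor as norm limits of suitably rescaled versions of the approximating sequences. Throughout, Theorem~\ref{thm:Tim03-07} promotes operator-norm convergence to cb-norm convergence (since the length is at most $1$), and Mathieu's theorem (Theorem~\ref{thm:Mathieu}) transports this to Haagerup-norm convergence of the underlying tensors in $M(A) \otimes_h M(A)$.

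For $\TM_{cp}(A)$, suppose $M_{a_n, a_n^*} \to \phi$ in operator norm, so $a_n \otimes a_n^* \to v$ in the Haagerup norm with $\Theta_A(v) = \phi$. If $v = 0$ take $c = 0$. Otherwise, since $\|a_n \otimes a_n^*\|_h = \|a_n\|^2 \to \|v\|_h > 0$, the norms $\|a_n\|$ are bounded and bounded away from zero. Setting $\alpha_n = a_n/\|a_n\|$ (so $\beta_n := a_n^*/\|a_n\| = \alpha_n^*$), the unit tensors $\alpha_n \otimes \beta_n$ converge to the unit tensor $v/\|v\|_h$. The construction in the proof of Corollary~\ref{cor:approxMabMcd} then supplies $\mu_n \in \mathbb{T}$ (along a subsequence) with norm-convergent limits $c_0 = \lim \mu_n \alpha_n$ and $d_0 = \lim \bar\mu_n \beta_n$, such that $v/\|v\|_h = c_0 \otimes d_0$. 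But $\bar\mu_n \beta_n = \bar\mu_n \alpha_n^* = (\mu_n \alpha_n)^*$ and the $*$-operation is isometric, so $d_0 = c_0^*$. Setting $c = \sqrt{\|v\|_h}\,c_0$ yields $v = c \otimes c^*$, hence $\phi = M_{c, c^*} \in \TM_{cp}(A)$.

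For $\mathrm{InnAut_{alg}}(A)$, suppose $M_{a_n, a_n^{-1}} \to \phi$. Since the multiplication map $M(A) \otimes_h M(A) \to M(A)$ is a complete contraction, $\|a_n \otimes a_n^{-1}\|_h \geq \|a_n a_n^{-1}\| = 1$, so $\|v\|_h \geq 1$ and in particular $\phi \neq 0$. Normalizing each factor independently as $\alpha_n = a_n/\|a_n\|$ and $\beta_n = a_n^{-1}/\|a_n^{-1}\|$, the same construction yields $c_0 = \lim \mu_n \alpha_n$ and $d_0 = \lim \bar\mu_n \beta_n$ with $v/\|v\|_h = c_0 \otimes d_0$, where $\|v\|_h = \lim \|a_n\| \|a_n^{-1}\|$. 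Multiplying, $c_0 d_0 = \lim \alpha_n \beta_n = \lim 1/(\|a_n\| \|a_n^{-1}\|) = 1/\|v\|_h$, and likewise $d_0 c_0 = 1/\|v\|_h$. Thus $c_0$ is invertible in $M(A)$, and setting $c = \sqrt{\|v\|_h}\,c_0$ (so $c^{-1} = \sqrt{\|v\|_h}\,d_0$) gives $v = c \otimes c^{-1}$, hence $\phi = M_{c, c^{-1}} \in \mathrm{InnAut_{alg}}(A)$. Finally, $\mathrm{InnAut}(A) = \TM_{cp}(A) \cap \mathrm{InnAut_{alg}}(A)$ is closed as an intersection of closed sets.

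The principal subtlety is the scalar bookkeeping across the two tensor factors: in the $\mathrm{InnAut_{alg}}$ case the norms $\|a_n\|$ and $\|a_n^{-1}\|$ are independent and must be normalized separately, while in both cases the modulus-one scalars $\mu_n$ supplied inductively via Lemma~\ref{lem:approxHaagerup} must be tracked through to exhibit the required structural relation between the factors of the limit tensor. Beyond this, no new ideas are needed.
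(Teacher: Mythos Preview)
Your argument is correct and takes a different route from the paper's. The paper first invokes Theorem~\ref{thm:setMab-closed} to write the limit as $\phi = M_{b,c}$ with $b,c \in M(A)$ fixed once and for all; then for $\TM_{cp}(A)$ it compares $b \otimes c$ to a \emph{single} nearby $a \otimes a^*$ via Lemma~\ref{lem:approxHaagerup} to obtain $\|b - c^*\| \leq 12\varepsilon$ for every $\varepsilon$, while for $\mathrm{InnAut_{alg}}(A)$ it simply evaluates the approximation $\|bxc - axa^{-1}\| < \varepsilon$ at $x = 1$ (using that $A$ is an essential ideal in $M(A)$) to get $\|bc - 1\| < \varepsilon$. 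You instead track the entire approximating sequence inside $M(A) \otimes_h M(A)$ and reuse the subsequence-and-rescaling construction from the proof of Corollary~\ref{cor:approxMabMcd} to produce norm-convergent factors, after which the required structural relation passes to the limit by continuity of $*$ and of multiplication. Your approach is more uniform across the two cases and explicitly identifies the limiting coefficients; in the $\mathrm{InnAut_{alg}}(A)$ case it also delivers two-sided invertibility directly (since $c_0 d_0 = d_0 c_0 = 1/\|v\|_h$), whereas the paper's evaluation at $x=1$ literally yields only $bc = 1$. The paper's argument, in turn, avoids subsequences and is slightly shorter.
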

\begin{proof}
Suppose that an operator $\phi$ in the norm closure of any of these sets. Then, by Theorem \ref{thm:setMab-closed} there are $b,c \in M(A)$ such that $\phi=M_{b,c}$. Let $\varepsilon >0$.

Suppose that $\phi$ is in the closure of $\TM_{cp}(A)$. By Theorem \ref{thm:Tim03-07} we may work with the cb-norm	instead of the (operator) norm. We may also assume that $\|\phi\|_{cb}=1=\|b\|=\|c\|$.
Then there is $a \in M(A)$ such that
\[
\|M_{b,c}-M_{a,a^*}\|_{cb}=\|b \otimes c-a\otimes a^*\|_h < \varepsilon
\]
(Theorem \ref{thm:Mathieu}). If $\varepsilon \leq 1/3$, by Lemma
\ref{lem:approxHaagerup} we can find a complex number $\mu$ of
modulus one such that $\|b-\mu a\|\ < 6 \varepsilon$ and
$\|c-\overline{\mu}a^*\|<6 \varepsilon$.
Then $\|b-c^*\|\leq 12 \varepsilon$. Hence $c=b^*$, so $\phi = M_{b,c} \in \TM_{cp}(A)$.

Suppose that $\phi$ is in the closure of $\mathrm{InnAut_{alg}}(A)$.
Then there is an invertible element $a \in M(A)$ such that
$\|M_{b,c}-M_{a,a^{-1}}\|< \varepsilon$. Since $A$ is an essential
ideal in $M(A)$, this implies $\|bxc-axa^{-1}\|< \varepsilon$ for
all $x \in M(A)$, $\|x\|\leq1$. Letting $x=1$ we obtain
$\|bc-1\|<\varepsilon$. Hence $c=b^{-1}$, so $\phi = M_{b,c} \in
\mathrm{InnAut_{alg}}(A)$.

$\mathrm{InnAut}(A)$ is norm closed as an intersection $\TM_{cp}(A)\cap \mathrm{InnAut_{alg}}(A)$ of two closed sets.
\end{proof}

\section{On homogeneous \Cstar-algebras}
\label{sec:homog}

We recall that a $C^*$-algebra $A$ is called \emph{$n$-homogeneous} (where $n$ is finite) if every irreducible representation of $A$ acts on an $n$-dimensional Hilbert space. We say that $A$ is \textit{homogeneous} if it is $n$-homogeneous for some $n$. We will use the following definitions and facts about homogeneous $C^*$-algebras:

\begin{remark}\label{rem:homog}
Let $A$ be an $n$-homogeneous $C^*$-algebra. By \cite[Theorem 4.2]{1951Kap} $\mathrm{Prim}(A)$ is a (locally compact) Hausdorff space. If there is no danger of confusion, we simply write $X$ for $\mathrm{Prim}(A)$.
\begin{itemize}
\item[(a)] A well-known theorem of Fell \cite[Theorem 3.2]{1961Fell},
and Tomiyama-Takesaki \cite[Theorem 5]{1961TomTak} asserts
that for any $n$-homogeneous \Cstar-algebra, $A$,
there is a locally trivial bundle
$\CE $ over $X$ with fibre $\mathbb{M}_n$ and structure group
$PU(n)=\mathrm{Aut}(\mathbb{M}_n)$ such that $A$ is isomorphic to the
$C^*$-algebra $\Gamma_0(\CE )$ of continuous sections of $\CE $ which vanish at
infinity.

Moreover, any two such algebras $A_i=\Gamma_0(\CE _i)$ with primitive
spectra $X_i$ ($i =1,2$)
are isomorphic if and only if there is a homeomorphism
$f : X_1 \to X_2$ such that $\CE _1 \cong f^*(\CE _2)$ (the pullback bundle)
as bundles over $X_1$ (see \cite[Theorem 6]{1961TomTak}). Thus, we may identify $A$ with $\Gamma_0(\CE )$.
\item[(b)] For $a \in A$ and  $t \in X$ we define $\pi_t(a) = a(t)$.
	Then, after identifying the fibre $\CE_t$ with $\IM_n$,
	$\pi_t: a \mapsto \pi_t(a)$ (for $t \in X$)
	 gives all irreducible representations of $A$ (up to the equivalence).

For a closed subset $S \subseteq X$ we define
\[
I_S =\bigcap_{t \in S} \ker
	\pi_t = \{ a \in A \ : \ a(t) =0 \ \mbox{for all} \ t \in S\}.
\]
By \cite[VII~8.7.]{FDBk} any closed two-sided ideal of $A$ is of the form $I_S$ for some closed subset $S \subset X$. Further, by the the \textit{generalized Tietze Extension Theorem} we may identify $A_S=A/I_S$ with $\Gamma_0(\CE |_S)$ (see \cite[II.~14.8. and VII~8.6.]{FDBk}). If $S=\{t\}$ we just write $A_t$.

\item[(c)] If $\phi\in \IB(A)$ and $S \subset X$ closed, we write
	$\phi_S$ for the operator $\phi_{I_S}$ on $A_S$ (see
	(\ref{eq:phiI})). If $S=\{t\}$ we just write
	$\phi_t$. If $A$ is trivial (i.e. $A=C_0(X,\IM_n )$), we will
	consider $\phi_t$ as an operator $ \colon \IM_n  \to \IM_n $
	(after identifying $A_t$ with $\IM_n$ in the obvious way).

If $U \subset X$ is open, we can regard $B= \Gamma_0(\CE|_U)$
	as the ideal $I_{ X \setminus U}$ of $A$ (by extending
	sections to be zero outside $U$) and for $\phi\in \IB(A)$, we
	then have a restriction $\phi|_U \in \IB(B)$  of $\phi$
	to this ideal (with $(\phi|_U)_t = \phi_t$ for $t \in U$).

\item[(d)] $\IB(A)=\ICB(A)$. Indeed, for $\phi \in \IB(A)$ and $t \in X$ we have $\|\phi_t\|_{cb} \leq n \|\phi_t\|$ (\cite[p.~114]{PaulsenBk}),
so by (\ref{eq:normformula}) we have $\|\phi\|_{cb} \leq n \|\phi\|$. Hence $\phi \in \ICB(A)$.

\item[(e)] Since each $a \in Z(A)$ has $a(t)$ a multiple of the identity
	in the fibre $\CE_t$ for
	each $t \in X$, we can identify $Z(A)$  with $C_0(X)$. Observe
	that $A$ is quasi-central (i.e. no primitive ideal of $A$ contains $Z(A)$).

\item[(f)] By \cite[Lemma 3.2]{MagUnif2009} we can identify $M(A)$ with $\Gamma_b(\CE )$ (the $C^*$-algebra of bounded continuous sections of $\CE $).
As usual, we will identify $Z(M(A))$ with $C_b(X)$ (using the Dauns-Hofmann theorem \cite[Theorem~A.34]{RaeWillBk}).

If $A=C_0(X, \IM_n )$, it is well known that $M(A)=C_b(X, \IM_n)=C( \beta X, \IM_n )$ \cite[Corollary 3.4]{1973APT}, where $\beta X$ denotes the Stone-\v Cech compactification.

\item[(g)]
On each fibre $\CE_t$ we can introduce an inner product $\langle \cdot,\cdot \rangle_{2}$ as follows.

Choose an open covering $\{U_\alpha\}$ of $X$ such that each $\CE|_{U_\alpha}$ is isomorphic to $U_\alpha \times \IM_n$ (as an $\IM_n$-bundle), say via isomorphism $\Phi_\alpha$.
Let
\begin{equation}
	\label{eqn:innerproductfibre}
	\langle \xi, \eta \rangle_{2} = \tr (\Phi_\alpha(\xi)
	\Phi_\alpha(\eta)^*) \qquad (\xi, \eta \in \CE_t),
\end{equation}
where $\alpha$ is chosen so that $t \in U_\alpha$ and
$\tr(\cdot)$ is the standard trace on $\IM_n$.
This is independent of the choice of $\alpha$ since all automorphisms of $\IM_n$ are inner and $\tr(\cdot)$ is invariant
under conjugation by unitaries. If $a, b \in M(A) = \Gamma_b(\CE)$ then
$t \mapsto \langle
a(t),b(t) \rangle_2$ is in $C_b(X)$.

The norm $\| \cdot \|_2$ on $\CE_t$  associated with $\langle \cdot,
\cdot \rangle_2$ satisfies
\begin{equation}\label{eqn:innerprodE}
	\|\xi\|  \leq \|\xi\|_2 \leq \sqrt{n}\|\xi\| \qquad (\xi
	\in \CE_t).
\end{equation}
In the terminology of \cite{1974Dupre}, $(\CE, \langle \cdot, \cdot
\rangle_2)$ is a (complex continuous) Hilbert bundle of rank $n^2$ with fibre
norms equivalent to the original \Cstar-norms (by (\ref{eqn:innerprodE})).

\item[(h)] $A$ is said to have the \emph{finite type property} if $\CE $ can be trivialized over some finite open cover of $X$.
By \cite[Remark~3.3]{MagUnif2009} $M(A)$ is homogeneous if and only if $A$ has the finite type property. When this fails, it is possible to have
$\mathrm{Prim}(M(A))$ non-Hausdorff \cite[Theorem~2.1]{2014ArcSomIsr}. On the other hand, $M(A)$ is always quasi-standard (see \cite[Corollary~4.10]{2011ArchSomMun}).
\end{itemize}
\end{remark}

For completeness we include a proof of the following.

\begin{proposition}
	\label{prop:ICB-is-CXbMn}
Let $X$ be a locally compact Hausdorff space and $A=C_0(X, \IM_n)$.
\begin{itemize}
\item[(a)] $\mathrm{IB}(A)$ can be identified with $C_b(X,
	\mathcal{B}(\IM_n ))$ by a mapping which sends an operator $\phi \in \IB(A)$ to the function $(t \mapsto \phi_t)$.
\item[(b)] Any $\phi \in \IB(A)$ can be written in the form
\begin{equation}\label{eq:elemrepphi}
\phi= \sum_{k, i=1}^n M_{e_{k, i},a_{k,i}},
\end{equation}
where $(e_{k,i})_{k,i=1}^n$ are standard matrix units of $\IM_n$
(considered as constant functions in $C_b(X, \IM_n )=M(A)$) and $a_{k,i}\in M(A)$ depend on
$\phi$.
\end{itemize}
Thus, we have
\[
\IB(A)=\ICB(A)=C_b(X,
	\mathcal{B}(\IM_n ))=\El(A)=\El_{n^2}(A).
\]
\end{proposition}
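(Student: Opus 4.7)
The plan is to establish (a) first, which identifies $\IB(A)$ with $C_b(X,\mathcal{B}(\IM_n))$ fibrewise, and then deduce (b) by decomposing each fibre operator $\phi_t$ via the natural linear isomorphism $\IM_n \otimes \IM_n \to \mathcal{B}(\IM_n)$ with coefficients that vary continuously in $t$. The final chain of equalities will fall out by combining (a), (b) with Remark~\ref{rem:homog}(d) and the obvious containments $\El_{n^2}(A) \subseteq \El(A) \subseteq \IB(A)$.

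For (a), given $\phi \in \IB(A)$, every evaluation ideal $I_{\{t\}} = \ker \pi_t$ is preserved, so $\phi$ induces $\phi_t \in \mathcal{B}(\IM_n)$ with $\|\phi_t\| \leq \|\phi\|$ by (\ref{eq:normformula}). The key point is continuity of $t \mapsto \phi_t$. I will fix $t_0 \in X$ and $m \in \IM_n$, and use local compactness of $X$ to choose $a \in A = C_0(X,\IM_n)$ which is identically $m$ on some open neighbourhood $V$ of $t_0$. Then $\phi_t(m) = \phi(a)(t)$ for all $t \in V$, and continuity of $\phi(a)$ gives continuity of $t \mapsto \phi_t(m)$ at $t_0$. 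Because $\IM_n$ is finite dimensional, this yields norm continuity of $t \mapsto \phi_t$ into $\mathcal{B}(\IM_n)$. Conversely, any $F \in C_b(X,\mathcal{B}(\IM_n))$ defines $\phi(a)(t) := F(t)(a(t))$, which lies in $C_0(X,\IM_n)$, has $\|\phi\| = \|F\|_\infty$, and automatically preserves all ideals of $A$ since these have the form $I_S$ for closed $S \subseteq X$. The two constructions are mutually inverse.

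For (b), I rely on the linear map $\IM_n \otimes \IM_n \to \mathcal{B}(\IM_n)$, $u \otimes v \mapsto M_{u,v}$. A direct computation shows $M_{e_{k,i},e_{j,l}}(e_{p,q}) = \delta_{i,p}\delta_{j,q}\,e_{k,l}$, so $\{M_{e_{k,i},e_{j,l}}\}$ is precisely the system of matrix units of $\mathcal{B}(\IM_n)\cong \IM_{n^2}$. By dimension count this map is a linear isomorphism, and consequently each $\psi \in \mathcal{B}(\IM_n)$ admits a unique expansion
\[
\psi = \sum_{k,i=1}^n M_{e_{k,i},\,b_{k,i}(\psi)}
\]
with $\psi \mapsto b_{k,i}(\psi)$ linear (hence continuous) from $\mathcal{B}(\IM_n)$ to $\IM_n$. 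Applied pointwise, $a_{k,i}(t) := b_{k,i}(\phi_t)$ defines continuous bounded $\IM_n$-valued functions on $X$, i.e.\ elements of $C_b(X,\IM_n) = M(A)$ by Remark~\ref{rem:homog}(f). The identity (\ref{eq:elemrepphi}) then holds on each fibre, and by (a) it holds in $\IB(A)$.

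The only real obstacle is the continuity argument in (a); once that is in hand, (b) is essentially linear algebra on $\IM_n \otimes \IM_n$ combined with the $C_b(X)$-linearity of $\phi$ implicit in (a), and the concluding chain of equalities is immediate.
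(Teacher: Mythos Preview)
Your proof is correct. Part~(a) is close in spirit to the paper's: both test continuity of $t\mapsto\phi_t$ against a section that is locally constant near $t_0$. The paper argues by contradiction via nets and a compactness-of-the-unit-ball argument in $\IM_n$, whereas you go directly, observing that $t\mapsto\phi_t(m)$ is continuous for each fixed $m$ and then invoking finite-dimensionality of $\IM_n$ to pass from strong-operator continuity to norm continuity. Your route is slightly cleaner.

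Part~(b) is where the two arguments genuinely diverge. The paper works first in the unital case, using $C(X)$-linearity to write $\phi(x)=\sum x_{i,j}\phi(e_{i,j})$, expands $\phi(e_{i,j})$ in matrix units, and regroups algebraically to obtain the $a_{k,i}$; for non-unital $A$ it then extends $t\mapsto\phi_t$ to $\beta X$ and applies the unital case to $C(\beta X,\IM_n)$. You instead exploit the linear isomorphism $\IM_n\otimes\IM_n\to\mathcal{B}(\IM_n)$ directly on each fibre: the coefficient maps $\psi\mapsto b_{k,i}(\psi)$ are linear on a finite-dimensional space, hence continuous, so $a_{k,i}(t)=b_{k,i}(\phi_t)$ lies in $C_b(X,\IM_n)=M(A)$ immediately. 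This bypasses both the explicit matrix regrouping and the Stone--\v{C}ech extension step. The paper's argument makes the formula for $a_{k,i}$ more concrete, while yours is shorter and handles the unital and non-unital cases uniformly.
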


\begin{proof}
Let $\phi \in \IB(A)$.

(a) Suppose that the function $t \mapsto \phi_t \colon X \to
\mathcal{B}(\IM_n )$ is discontinuous at some point $t_0
	\in X$. Then there is a net $(t_\alpha)$ in $X$
	converging to $t_0$ such that $\| \phi_{t_\alpha} - \phi_{t_0}\|
	\geq \delta > 0$ for all $\alpha$. So there is $u_\alpha \in
	\IM_n $ of norm at most 1 with $\| \phi_{t_\alpha}(u_\alpha) -
	\phi_{t_0}(u_\alpha) \| \geq \delta$. Passing to a subnet we may
	suppose $u_\alpha \to u$ and then (since $\|\phi_{t_\alpha}\|
	\leq \|\phi\|$ and $\|\phi_{t_0}\| \leq \|\phi\|$) we must
	have
	\[
		\| \phi_{t_\alpha}(u) - \phi_{t_0}(u) \| > \delta/2
	\]
	for $\alpha$ large enough.

	Now choose $f \in C_0(X)$ equal to 1 on a neighbourhood of
	$t_0$ and put $a(t) = f(t) u$. We then have  $a \in A$ and
	\[
		\pi_{t_\alpha}(\phi(a)) = f(t_\alpha)
		\phi_{t_\alpha}(u) = \phi_{t_\alpha}(u)
	\]
	for large $\alpha$ and this contradicts continuity of
	$\phi(a)$ at $t_0$.

	So $t \mapsto \phi_t$ must be continuous (and also bounded by
	$\|\phi\|$).

	Conversely, assume that the function $t \mapsto \phi_t$ is continuous and
	uniformly bounded by some $M > 0$. Then for $a \in A$, $t
	\mapsto \phi_t(\pi_t(a))$ is continuous, bounded and vanishes at
	infinity, hence in $A$. So there is an associated mapping $\phi \colon
	A \to A$ which is easily seen to be bounded and linear.
	Moreover $\phi \in \IB(A)$ since all ideals of $A$ are of the
	form $I_S$ for some closed $S \subset X$.

(b) First assume that $A$ is unital, so that $X$ is compact. Then each $x \in A$ is a linear combination over $C(X)=Z(A)$ of the
	${e}_{i,j}$ and since $\phi$ is $C(X)$-linear, we have
	\[
		 x=\sum_{i,j=1}^n x_{i,j}{e}_{i,j} \ \ \Rightarrow \ \ \phi(x) = \sum_{i,j=1}^n x_{i,j} \phi({e}_{i,j}).
	\]
		We may write
	\[
		\phi({e}_{i,j}) =  \sum_{k,\ell=1}^n \phi_{i,j, k,
		\ell}
		                {e}_{k,\ell}
				=  \sum_{k=1}^n {e}_{k, i} {e}_{i, j}
				\left(\sum_{\ell=1}^n \phi_{i,j, k, \ell}
			{e}_{j, \ell} \right)
		\]
		where $\phi_{i,j, k, \ell} \in C(X)$. It follows that
	\[
		\phi(x)=  \sum_{k, i=1}^n {e}_{k, i} x
		\left(\sum_{j, \ell=1}^n \phi_{i,j, k, \ell} {e}_{j,
		\ell} \right) \ \ \Rightarrow \ \ \phi= \sum_{k, i=1}^n M_{{e}_{k, i},a_{k,i}},
	\]
where $a_{k,i}=\sum_{j, \ell=1}^n \phi_{i,j, k, \ell} {e}_{j,
		\ell} \in M(A)$.

Now suppose that $A$ is non-unital (so that $X$ is non-compact). By (a) we can identify $\phi$ with the function $t \mapsto \phi_t \colon X \to
	\mathcal{B}(\IM_n )$, which can be then uniquely extended to a continuous function $\beta X \to \mathcal{B}(\IM_n )$. This
 extension defines an operator in $\IB( C( \beta X, \IM_n )) = \IB(M(A))$, which we also denote by $\phi$. By the first part of the proof, $\phi$ can be represented as (\ref{eq:elemrepphi}).
\end{proof}

\begin{remark}
	In fact, in the case of general separable $C^*$-algebras $A$,
	Magajna \cite{MagUnif2009} establishes
	the equivalence of the following properties:
	\begin{enumerate}[(a)]
		\item $\IB(A) = \El(A)$.
		\item $\El(A)$  is norm dense in $\IB(A)$.
        \item $A$ is a finite direct sums of homogeneous
			$C^*$-algebras with the finite type property.
	\end{enumerate}
	Analyzing the arguments in \cite{MagUnif2009}, for the implication (c) $\Rightarrow$ (a) it is sufficient to assume that $X$ is paracompact.
\end{remark}

Since any $n$-homogeneous $C^*$-algebra is locally of the form $C(K,\IM_n)$ for some compact subset $K$
of $X$ with $K^\circ \neq \emptyset$, we have the following consequence of Proposition \ref{prop:ICB-is-CXbMn}:

\begin{corollary}
	\label{cor:phi-t-continuous}
If $A$ is a homogeneous $C^*$-algebra, then for any $\phi \in \IB(A)$ the function $t \mapsto \|\phi_t\|$ is continuous on $X$.
Hence the \emph{cozero set} $\mathrm{coz}(\phi) = \{ t \in X : \phi_t
\neq 0\}$ is $\sigma$-compact and open in $X$.
\end{corollary}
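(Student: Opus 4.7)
The plan is to derive both the continuity statement and the cozero set claim from Proposition~\ref{prop:ICB-is-CXbMn}(a), applied on compact patches over which the bundle $\CE$ is trivial.

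For continuity, fix $t_0 \in X$. By local triviality of $\CE$ there is an open neighbourhood $V$ of $t_0$ with $\CE|_V$ trivial; shrinking, one can choose a compact neighbourhood $K$ of $t_0$ with $t_0 \in K^\circ \subseteq K \subseteq V$. By Remark~\ref{rem:homog}(b) one has $A_K \cong \Gamma_0(\CE|_K) \cong C(K,\IM_n)$, and by Remark~\ref{rem:homog}(c) the operator $\phi$ induces $\phi_K \in \IB(A_K)$ with $(\phi_K)_t = \phi_t$ for $t \in K$. Proposition~\ref{prop:ICB-is-CXbMn}(a) then identifies $\phi_K$ with the continuous function $t \mapsto \phi_t$ from $K$ into $\mathcal{B}(\IM_n)$. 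Since $t_0$ was arbitrary, $t \mapsto \phi_t$ is continuous on all of $X$, and $t \mapsto \|\phi_t\|$ inherits continuity by composing with the operator norm.

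For the cozero set, openness is immediate because $\mathrm{coz}(\phi)$ is the preimage of $(0,\infty)$ under the continuous function just obtained. For $\sigma$-compactness, note that for each $a \in A$ the section $\phi(a)$ lies in $\Gamma_0(\CE)$, so the scalar function $t \mapsto \|\phi(a)(t)\|$ belongs to $C_0(X)$; consequently
\[
\mathrm{coz}(\phi(a)) = \bigcup_{n \geq 1} \{t \in X : \|\phi(a)(t)\| \geq 1/n\}
\]
is $\sigma$-compact as a countable union of compact sets. Since $\phi_t \neq 0$ iff $\phi(a)(t) \neq 0$ for some $a \in A$, one has $\mathrm{coz}(\phi) = \bigcup_{a \in A} \mathrm{coz}(\phi(a))$. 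Under the separability framework standing throughout the paper, $X$ is second countable, so this open cover admits a countable subcover, exhibiting $\mathrm{coz}(\phi)$ as a countable union of $\sigma$-compact sets and hence $\sigma$-compact.

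The main technical step is the local reduction; once we have passed to a compact $K$ with $\CE|_K$ trivial, Proposition~\ref{prop:ICB-is-CXbMn}(a) supplies the continuity with no further work, and the cozero set statement then reduces to standard covering arguments together with the fact that $\phi$ maps $A$ into $A$.
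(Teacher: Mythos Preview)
Your continuity argument is correct and is exactly what the paper does: reduce locally to $C(K,\IM_n)$ and invoke Proposition~\ref{prop:ICB-is-CXbMn}(a).

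The $\sigma$-compactness argument, however, rests on a hypothesis that is not there. You write ``under the separability framework standing throughout the paper, $X$ is second countable''. There is no such standing assumption; the corollary is stated for an arbitrary homogeneous $C^*$-algebra. And without a countability hypothesis on $X$ the $\sigma$-compactness claim is in fact false as stated. Take $X$ an uncountable discrete space, $A=C_0(X,\IM_n)$, and $\phi=\id_A=M_{1,1}\in\IB(A)$. Then $\|\phi_t\|=1$ for every $t$, so $\mathrm{coz}(\phi)=X$, which is not $\sigma$-compact. Your covering $\mathrm{coz}(\phi)=\bigcup_{a\in A}\mathrm{coz}(\phi(a))$ is correct, but extracting a countable subcover genuinely needs Lindel\"ofness of $X$ (equivalently, second countability in the locally compact metrizable case), which you cannot assume.

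The paper's one-line justification before the corollary really only addresses continuity and openness. In the places where $\sigma$-compactness of $\mathrm{coz}(\phi)$ is actually used later (e.g.\ the proof of Corollary~\ref{cor:IB01clTM0}), it is obtained not from this corollary but from the additional hypothesis $\phi\in\IB_0(A)$: if $t\mapsto\|\phi_t\|$ lies in $C_0(X)$ then $\mathrm{coz}(\phi)=\bigcup_{n\geq 1}\{t:\|\phi_t\|\geq 1/n\}$ is a countable union of compact sets. Elsewhere (Theorem~\ref{thm:IB1equi}) the paper assumes $A$ separable and gets $\sigma$-compactness from second countability. So the right fix is not to invent a blanket separability assumption, but either to weaken the corollary to continuity and openness only, or to add the hypothesis $\phi\in\IB_0(A)$ (or $X$ second countable) when $\sigma$-compactness is needed.
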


\section{Fibrewise length restrictions}
\label{sec:Fibrewise}

Here we consider a homogeneous \Cstar-algebra
$A = \Gamma_0(\mathcal E)$
and
operators $\phi \in \IB(A)$
such that $\phi_t$ is a two-sided multiplication on each fibre $A_t$
(with $t \in X$, and $X = \mathrm{Prim} (A)$ as usual).
We will write $\phi \in \IB_1(A)$ for this hypothesis.
For separable $A$,
the main result in this section (Theorem~\ref{thm:IB1equi})
characterizes when all such operators $\phi$ are two-sided
multiplications, in terms of triviality of
complex line subbundles of $\mathcal
E|_U$ for $U \subset X$ open.

In addition to
$\IB_1(A)$,
we introduce various subsets
$\IB_1^{\mathrm{nv}}(A)$
and $\IB_{0,1}(A)$ (Notation~\ref{not:IB-nv})
which are designed to facilitate the description of $\TM(A)$,
$\TM_0(A)$ and both of their norm closures in terms of complex line bundles.
The sufficient condition that ensures $\IB_1^{\mathrm{nv}}(A) \subset \TM(A)$
is that $X$ is paracompact with vanishing second integral \v{C}ech cohomology group $\check{H}^2(X;\IZ)$ (Corollary~\ref{cor:H2-cond}).
For $X$ compact of finite covering dimension $d$ and
$A = C(X, \IM_n ) $ we show that $\TM(A) \subsetneq \IB_1(A)$ provided $\check{H}^2(X; \IZ) \neq 0$ and $n^2 \geq (d+1)/2$
(Proposition~\ref{prop:complexH2not0}).
We get the same conclusion
$\TM(A) \subsetneq \IB_1(A)$
for $\sigma$-unital $n$-homogeneous \Cstar-algebras $A =
\Gamma_0(\CE)$ with $n \geq 2$ provided $X$ has a nonempty open subset
homeomorphic to (an open set in) $\IR^d$ with $d \geq 3$
(Corollary~\ref{cor:H2-R2sub}).

\begin{notation}
	\label{not:IB-ell}
	Let $A$ be an $n$-homogeneous $C^*$-algebra.
	For $\ell \geq 1$ we write
	\[
		\IB_\ell(A) = \{ \phi \in \IB(A) : \phi_t \in \El_\ell(A_t) \ \mbox{for all} \ t \in X \}.
	\]
\end{notation}

\begin{lemma}
	\label{lem:field-el-ops}
		Let $A=\Gamma_0(\CE )$ be a homogeneous $C^*$-algebra and  $\phi \in \IB_\ell(A)$.

If $t_0 \in X$ is such that $\phi_{t_0} \in \El_\ell(A_{t_0})
	\setminus \El_{\ell-1}(A_{t_0})$ (that is, such that $\phi_{t_0}$
	has length exactly the maximal $\ell$),
	then there are $a_1, \ldots, a_\ell, b_1, \ldots , b_\ell \in
	A$ and a compact neighbourhood $N$ of $t_0$ such that
	$\phi$ agrees with the elementary operator $\sum_{i=1}^\ell M_{a_i,b_i}$ modulo the ideal $I_N$, that is

	\[\phi(x) - \sum_{i=1}^\ell a_i x b_i \in I_N \qquad \mbox{for all} \ x \in A.\]
	Moreover, we can choose $N$ so that $\phi_t \in \El_\ell (A_{t})
	\setminus \El_{\ell-1} (A_t)$ for all $t \in N$ (that is,
	$\phi_t$ is
	of the maximal length $\ell$ for $t$ in a neighbourhood of
	$t_0$).
\end{lemma}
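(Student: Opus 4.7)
The plan is to localize around $t_0$ in a trivializing chart of $\CE$, identify $\phi_t$ with a continuous family of tensors via the vector-space isomorphism $\IM_n \otimes \IM_n \cong \mathcal{B}(\IM_n)$, and then reduce the problem to the standard fact that rank-$\ell$ tensors admit a continuous local factorization (via the Schur complement).

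First, I would choose a compact neighbourhood $K$ of $t_0$ with $t_0 \in K^\circ$ over which $\CE$ is trivial, so that by Proposition~\ref{prop:ICB-is-CXbMn}(a) the restriction $\phi|_K$ is encoded by a continuous function $t \mapsto \phi_t \colon K \to \mathcal{B}(\IM_n)$. Since $\IM_n$ is a finite-dimensional central simple algebra, the linear map $T \colon \IM_n \otimes \IM_n \to \mathcal{B}(\IM_n)$, $a \otimes b \mapsto M_{a,b}$, is a vector-space isomorphism: both sides have dimension $n^4$, and injectivity is easily verified by evaluating on matrix units. Under $T^{-1}$, the length of an elementary operator on $\IM_n$ equals the tensor rank of its preimage, so $\tau_t := T^{-1}(\phi_t)$ satisfies $\mathrm{rank}(\tau_t) \leq \ell$ on $K$, with equality at $t_0$. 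The ``moreover'' part is then immediate from the lower semicontinuity of tensor rank (the locus $\{t : \mathrm{rank}(\tau_t) \geq \ell\}$ is Zariski-open as the complement of a determinantal variety), combined with the uniform upper bound $\ell$: we conclude $\phi_t$ has length exactly $\ell$ on an open neighbourhood of $t_0$.

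For the main assertion, fix a minimal expression $\phi_{t_0} = \sum_{i=1}^\ell M_{\alpha_i,\beta_i}$. A short reduction argument (if $\alpha_\ell = \sum_{i<\ell} c_i \alpha_i$ then $\phi_{t_0} = \sum_{i<\ell} M_{\alpha_i,\, \beta_i+c_i\beta_\ell}$, contradicting minimality, and similarly for the $\beta_i$) shows that both $\{\alpha_i\}_{i=1}^\ell$ and $\{\beta_i\}_{i=1}^\ell$ are linearly independent in $\IM_n$. Extend them to bases of $\IM_n$ and write $\tau_t$ as an $n^2 \times n^2$ matrix $M(t)$ in the induced basis of $\IM_n \otimes \IM_n$; then $M(t_0) = \mathrm{diag}(I_\ell, 0)$. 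On a compact neighbourhood $N \subseteq K^\circ$ of $t_0$ the upper-left $\ell \times \ell$ block $A(t)$ of $M(t)$ remains invertible by continuity, and combined with $\mathrm{rank}(M(t)) = \ell$ the Schur complement forces the remaining blocks to satisfy $D(t) = C(t)A(t)^{-1}B(t)$, yielding the factorization
\[
M(t) = \begin{pmatrix} I_\ell \\ C(t) A(t)^{-1} \end{pmatrix} \bigl( A(t),\, B(t) \bigr) \qquad (t \in N).
\]
Translating back, this gives continuous $\IM_n$-valued functions $a_i, b_i$ on $N$ with $\tau_t = \sum_{i=1}^\ell a_i(t) \otimes b_i(t)$ and $a_i(t_0) = \alpha_i$, $b_i(t_0) = \beta_i$. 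This continuous factorization is the technical heart of the argument.

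To globalize, I would choose $f \in C_c(X)$ with $\mathrm{supp}(f) \subseteq K^\circ$ and $f \equiv 1$ on $N$, and define $\tilde a_i, \tilde b_i \in A$ via the trivialization of $\CE|_K$ by $t \mapsto f(t) a_i(t)$ and $t \mapsto f(t) b_i(t)$ on $K$, extended by zero outside $K$. For $t \in N$ we have $\tilde a_i(t) = a_i(t)$ and $\tilde b_i(t) = b_i(t)$, hence $\phi_t = \sum_{i=1}^\ell M_{\tilde a_i(t), \tilde b_i(t)}$, which is precisely the statement that $\phi - \sum_{i=1}^\ell M_{\tilde a_i, \tilde b_i}$ maps $A$ into $I_N$. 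Apart from the continuous factorization in the previous paragraph, every step is routine from the bundle description of $A$ and Proposition~\ref{prop:ICB-is-CXbMn}.
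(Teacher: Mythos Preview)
Your argument is correct and follows the same overall strategy as the paper: localize in a trivializing chart, identify the length of $\phi_t$ with the tensor rank of its preimage in $\IM_n\otimes\IM_n$, use openness of the maximal-rank locus for the ``moreover'' clause, and then produce a continuous rank-$\ell$ factorization near $t_0$. The difference lies only in how that factorization is obtained. The paper invokes Proposition~\ref{prop:ICB-is-CXbMn}(b) to write $\phi_K=\sum_{i=1}^{n^2} M_{c_i,d_i}$ with the left coefficients $c_i$ equal to \emph{constant} matrix units; since only the $d_i(t)$ vary, one simply selects a maximal independent subset of $d_1(t_0),\ldots,d_{n^2}(t_0)$ and projects via a fixed $\ell\times n^2$ matrix $\beta$ to define $d'_j(t)=\beta\,d(t)$. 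Your route bypasses Proposition~\ref{prop:ICB-is-CXbMn}(b) entirely, working instead with the coordinate matrix $M(t)$ of $\tau_t$ in a basis adapted to $\phi_{t_0}$ and applying the Schur-complement factorization. Both are standard linear algebra; the paper's version is slightly more concrete because it exploits the one-sided freedom already set up, while yours is a cleaner, basis-independent argument that would work verbatim for any continuous family of rank-$\ell$ tensors.

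One small slip in your globalization step: you define $a_i,b_i$ only on $N$, but then require $\operatorname{supp}(f)\subseteq K^\circ$, so $f(t)a_i(t)$ is not defined on $K\setminus N$. The fix is immediate---either note that the Schur factorization is in fact valid on the open set $\{t\in K: A(t)\text{ invertible}\}\supseteq N$ and take $\operatorname{supp}(f)$ inside that set, or shrink $N$ and appeal to Tietze (Remark~\ref{rem:homog}(b)) as the paper does.
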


\begin{proof}
	Choose a compact neighbourhood $K$ of $t_0$ such that $A_K\cong
	C(K,\IM_n )$ and let $\phi_K$ be the induced operator (Remark
	\ref{rem:homog} (b), (c)).

Then, for $x \in A_K$ we have
	$\phi_K(x) = \sum_{i=1}^{n^2} c_i x d_i$ for some $c_i, d_i
	\in A_K$ (by Proposition~\ref{prop:ICB-is-CXbMn} (b)).
	Moreover we can assume that $\{ c_1(t), \ldots, c_{n^2}(t)\}$
	are linearly independent for each $t \in K$, and even
	independent of $t$.
	Since
	$(\phi_K)_{t_0} = \phi_{t_0}$ has length $\ell$, we must be
	able to
	write (in $\IM_n \otimes \IM_n$)
	\[
		\sum_{i=1}^{n^2} c_i(t_0) \otimes d_i (t_0)
		=
		\sum_{j=1}^{\ell} c'_j \otimes d'_j.
	\]
	We can choose
	 $d'_1, \ldots, d'_\ell$ to be a maximal linearly independent
	 subsequence of $d_1(t_0), \ldots, d_{n^2}(t_0)$.
	 Then, via elementary linear algebra, there is a matrix
	 $\alpha$ of size $n^2 \times \ell$ and another matrix $\beta$
	 of size $\ell \times n^2$ so that
	\[
		\begin{bmatrix} d_1(t_0) \\ \vdots \\ d_{n^2}(t_0)
		        \end{bmatrix}
		= \alpha
	\begin{bmatrix} d'_1 \\ \vdots \\ d'_\ell \end{bmatrix},
		\qquad
	\begin{bmatrix} d'_1 \\ \vdots \\ d'_\ell \end{bmatrix}
		= \beta
		\begin{bmatrix} d_1(t_0) \\ \vdots \\ d_{n^2}(t_0)
		        \end{bmatrix}
	\]
	and $\beta\alpha$ the identity. We have
	\[
	\begin{bmatrix} c'_1 & \cdots & c'_\ell \end{bmatrix}
		= \begin{bmatrix} c_1(t_0) & \cdots & c_{n^2}(t_0)
	\end{bmatrix} \alpha.
	\]

	If we define
	\[
	\begin{bmatrix} d'_1(t) \\ \vdots \\ d'_\ell(t) \end{bmatrix}
		= \beta
		\begin{bmatrix} d_1(t) \\ \vdots \\ d_{n^2}(t)
		        \end{bmatrix}
	\]
	then
	$d'_1(t), \ldots, d'_\ell(t)$ must be
	linearly independent for all $t$ in some compact neighbourhood $N$ of $t_0$.
	Thus for $t \in N$ we have (in $\IM_n \otimes \IM_n$)
	\[
		\sum_{i=1}^{n^2} c_i(t) \otimes d_i(t)
		=
		\sum_{i=1}^{n^2} c_i(t_0) \otimes d_i(t)
		=
		\sum_{j=1}^{\ell} c'_j \otimes d'_j(t).
	\]
    By Remark \ref{rem:homog} (b) we can find elements $a_j,b_j \in A$ ($1 \leq j \leq \ell$) such that $a_j(t)=c_j'$ and $b_j(t)=d_j'(t)$
    for all $t \in N$.

	Since for each $t \in N$ both of the sets $\{a_1(t), \ldots,
	a_\ell(t)\}$ and $\{b_1(t), \ldots, b_\ell(t)\}$ are linearly
	independent, we get that $\phi_t=\sum_{j=1}^\ell
	M_{a_j(t),b_j(t)}$ has length exactly $\ell$ for all $t \in
	N$ as required.
\end{proof}

\begin{corollary}
	\label{cor:field-el-ops}
	Let $A$ be a homogeneous $C^*$-algebra and $\phi \in \mathrm{IB}_1(A)$. If $t_0 \in X$ is such that $\phi_{t_0} \neq 0$ then there is a compact neighbourhood $N$ of $t_0$ and $a,b \in A$ such that $a(t) \neq 0$ and $b(t)\neq 0$ for all $t \in N$ and
	$\phi$ agrees with $M_{a,b}$ modulo the ideal $I_N$.
\end{corollary}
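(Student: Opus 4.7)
The plan is to apply Lemma \ref{lem:field-el-ops} in the special case $\ell = 1$ and then read off the desired conclusions directly from what that lemma already produces. Since $\phi \in \IB_1(A)$, by definition $\phi_t \in \El_1(A_t) = \TM(A_t)$ for every $t \in X$. Moreover, $\El_0(A_{t_0}) = \{0\}$, so the hypothesis $\phi_{t_0} \neq 0$ is exactly the statement that $\phi_{t_0} \in \El_1(A_{t_0}) \setminus \El_0(A_{t_0})$, i.e.\ $\phi_{t_0}$ has length precisely the maximal value $\ell = 1$.

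Lemma \ref{lem:field-el-ops} then furnishes elements $a, b \in A$ and a compact neighbourhood $N$ of $t_0$ such that $\phi$ agrees with $M_{a,b}$ modulo $I_N$, with the additional property that $\phi_t$ has length exactly $1$ for every $t \in N$. All that remains is to inspect the conclusion of the lemma for the case $\ell = 1$: the statement there that $\{a_1(t), \ldots, a_\ell(t)\}$ and $\{b_1(t), \ldots, b_\ell(t)\}$ are linearly independent in the fibre for each $t \in N$ collapses, when $\ell = 1$, to the assertion that $a(t) \neq 0$ and $b(t) \neq 0$ for every $t \in N$.

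Since there is no further obstacle to overcome, the proof is essentially a one-line application of Lemma \ref{lem:field-el-ops} with $\ell = 1$, followed by the trivial observation that linear independence of a single vector means nonvanishing. If a shrinking of $N$ is needed to ensure the nonvanishing condition on both $a$ and $b$ simultaneously, this is already built into the lemma's statement; otherwise, one can intersect with the open sets $\{t \in X : a(t) \neq 0\}$ and $\{t \in X : b(t) \neq 0\}$ (which contain $t_0$ by continuity), and then choose a compact neighbourhood of $t_0$ inside the resulting open set.
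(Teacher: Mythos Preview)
Your proposal is correct and matches the paper's intended argument: the corollary is stated without proof precisely because it is the case $\ell = 1$ of Lemma~\ref{lem:field-el-ops}. One small remark: you do not actually need to reach into the proof of the lemma for the linear independence statement; the lemma's own conclusion that $\phi_t$ has length exactly $1$ (i.e.\ $\phi_t = M_{a(t),b(t)} \neq 0$) for all $t \in N$ already forces $a(t) \neq 0$ and $b(t) \neq 0$, and no further shrinking of $N$ is required.
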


\begin{remark}
	\label{rem:normalise}
	Let $A=\Gamma_0(\CE)$ be a homogeneous $C^*$-algebra, $a, b \in M(A)=\Gamma_b(\CE)$ and  $\phi =M_{a,b}$.

	We may replace $a$ and $b$ by
	\[
		t \mapsto \sqrt{\frac{\|b(t)\|}{\|a(t)\|}} a(t)
		\quad \mbox{and} \quad
		t \mapsto \sqrt{\frac{\|a(t)\|}{\|b(t)\|}} b(t)
	\]
	without changing $\phi$ so as
	to ensure that $\|a(t)\| = \|b(t)\|$ for each $t \in
	X$ and that $\|\phi_t\| = \|a(t)\|^2 =  \|b(t)\|^2$ for $t
		\in X$.
\end{remark}

\begin{notation}
	\label{not:IB-nv}
	Let $A$ be a homogeneous $C^*$-algebra.
	We write
	\[
		\IB_1^{\mathrm{nv}} (A) = \{ \phi \in \IB(A) \ : \ 0 \neq \phi_t \in
		\TM(A_t) \ \mbox{for all} \ t \in X \}
	\]
	(where $\mathrm{nv}$ signifies nowhere-vanishing).

We also use
\begin{eqnarray*}
	\IB_0(A)&=&\{ \phi \in \IB(A) \ : (t \mapsto \|\phi_t\|)
	\in C_0(X)\},\\
	\IB_{0,1} (A) &=& \IB_0(A) \cap \IB_1(A) ,
	\\
	\IB^{\mathrm{nv}}_{0,1}(A) &=& \IB_1^{\mathrm{nv}} (A) \cap \IB_0(A) \mbox{, and } \\
    \TM^{\mathrm{nv}}(A)&=&\TM(A) \cap \IB_1^{\mathrm{nv}} (A).
\end{eqnarray*}

	By Remark~\ref{rem:normalise}, $\TM_0(A) = \TM(A) \cap \IB_0(A)$.
\end{notation}

\begin{proposition}
	\label{prop:bundle}
	Let $A=\Gamma_0(\CE )$ be a homogeneous $C^*$-algebra and suppose $\phi \in \IB_1^{\mathrm{nv}} (A)$. Then there is a canonically associated complex line subbundle
	$\CL _\phi$ of $\CE $
	with the property that
$$\phi \in \TM(A) \quad \Longleftrightarrow \quad \CL _\phi \ \mbox{is a
	trivial bundle}.$$
\end{proposition}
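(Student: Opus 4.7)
The plan is to define $\CL_\phi$ fibrewise by
\[
\CL_\phi(t) = \IC\, a(t) \subseteq \CE_t,
\]
where, for each $t \in X$, we pick via Corollary \ref{cor:field-el-ops} a compact neighbourhood $N$ of $t$ and $a, b \in A$ nowhere-vanishing on $N$ with $\phi_s = M_{a(s), b(s)}$ for all $s \in N$. To see that $\IC\, a(t)$ does not depend on the chosen local representation, suppose $M_{a,b}$ and $M_{a',b'}$ both agree with $\phi$ near $t$. On the fibre $\CE_t = \IM_n$, Theorem \ref{thm:Mathieu} applied to the prime \Cstar-algebra $\IM_n$ gives an isometric embedding of $\IM_n \otimes_h \IM_n$ into $\ICB(\IM_n)$, so $a(t) \otimes b(t) = a'(t) \otimes b'(t)$; since both are nonzero elementary tensors, $a'(t) = \lambda a(t)$ for some $\lambda \in \IC^*$, and hence $\IC\, a(t) = \IC\, a'(t)$. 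The local sections $a$ are then continuous nowhere-vanishing sections of $\CL_\phi$, which provide local trivializations and make $\CL_\phi$ a complex line subbundle of $\CE$.

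The forward implication is immediate: if $\phi = M_{a,b}$ with $a,b \in M(A) = \Gamma_b(\CE)$, then $\phi_t \neq 0$ forces $a(t) \neq 0 \neq b(t)$ for every $t$, so by the uniqueness just established $a$ is a nowhere-vanishing global section of $\CL_\phi$ and trivializes it.

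For the converse, assume $\CL_\phi$ is trivial. Pick a nowhere-vanishing global section $\sigma$; since $t \mapsto \|\sigma(t)\|$ is continuous and strictly positive, after rescaling we may assume $\|\sigma(t)\| = 1$, so $\sigma \in M(A)$. On any neighbourhood $N$ where Corollary \ref{cor:field-el-ops} supplies $a, b$ with $\phi_s = M_{a(s),b(s)}$, there is a unique continuous nowhere-vanishing scalar function $f$ on $N$ with $a = f\sigma$; set $b_N := fb$ on $N$. If $(a_i, b_i)$ for $i=1,2$ are two such local representations on $N_1, N_2$ with $a_i = f_i \sigma$, then on $N_1 \cap N_2$ the identity $a_1(t) x b_1(t) = a_2(t) x b_2(t)$ for all $x \in \IM_n$ becomes
\[
\sigma(t)\, x\, \bigl(f_1(t) b_1(t) - f_2(t) b_2(t)\bigr) = 0 \qquad (x \in \IM_n),
\]
and primeness of $\IM_n$ together with $\sigma(t) \neq 0$ forces $f_1 b_1 = f_2 b_2$ on $N_1 \cap N_2$. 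Hence the $b_N$ patch to a global continuous section $b$ of $\CE$ with $\phi = M_{\sigma, b}$. Applying the equality $\|M_{c,d}\| = \|c\|\|d\|$ on the prime fibres $\IM_n$ yields $\|b(t)\| = \|\phi_t\| \leq \|\phi\|$ by (\ref{eq:normformula}), so $b \in \Gamma_b(\CE) = M(A)$ and $\phi \in \TM(A)$.

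I expect the main technical step to be the gluing on overlaps, which hinges crucially on primeness of $\IM_n$ and on the normalization $\|\sigma\| \equiv 1$; the well-definedness and local triviality of $\CL_\phi$ itself reduce cleanly to Corollary \ref{cor:field-el-ops} and Mathieu's theorem.
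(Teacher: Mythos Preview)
Your proof is correct and follows essentially the same approach as the paper: both define $\CL_\phi$ via the local left-multiplier supplied by Corollary~\ref{cor:field-el-ops}, verify well-definedness on overlaps (you invoke Mathieu's theorem explicitly where the paper simply asserts the scalar relation $a'(t)=\mu(t)a(t)$), and for the converse pick a global section of $\CL_\phi$ and glue the right-multipliers locally. The only cosmetic difference is that you normalize $\sigma$ to unit norm at the outset and bound $b$ directly via $\|b(t)\|=\|\phi_t\|$, whereas the paper first produces possibly unbounded sections $s,s'$ and then applies Remark~\ref{rem:normalise} at the end.
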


\begin{proof}
	By Corollary~\ref{cor:field-el-ops}, locally $\phi$ is a two-sided
	multiplication. That is, given $t_0 \in X$ there is a
	compact neighbourhood $N$ of $t_0$ and $a , b \in A$
	such that $\phi_t = M_{a(t), b(t)}$ for all $t \in N$.

	We define
\[ \CL _\phi \cap (\CE |_N) \quad
	\mbox{to be} \quad \{(t, \lambda a(t))  :  t \in N, \lambda \in \IC\}.
\]

	Then $\CL _\phi $ is well-defined since if $N'$ is another
	neighbourhood of a possibly different $t_0' \in X$ and $a', b'
	\in A$ have $\phi_t = M_{a'(t), b'(t)}$ for all $t \in N'$,
	then there is $\mu(t) \in \IC \setminus \{0\}$ such that $a'(t) = \mu(t) a(t)$ for $t \in N \cap N'$.

	The definition we gave of $\CL _\phi \cap (\CE |_N)$ shows
	that $\CL _\phi $ is a locally trivial complex line subbundle of $\CE $.
	The map
	
	\[
	\colon N \times \IC \to \CL _\phi \cap (\CE |_N) \quad \mbox{given by} \quad (t, \lambda) \mapsto  (t, \lambda a(t)).
	\]
	provides a local trivialization.

If 	$\phi \in \TM(A)$, then clearly $\CL _\phi$ is a trivial bundle.
Conversely, If $\CL _\phi$ is a trivial bundle, choose a continuous nowhere vanishing
section $s : X \to \CL _\phi$. Then for any neighbourhood $N$ as
	above there is a continuous map $\zeta \colon N \to \IC \setminus \{0\}$ such
	that $a(t) = \zeta(t) s(t)$. If we define $s' \colon X
	\to \CE $ by $s'(t) = (1/\zeta(t)) b(t)$ for $t \in N$,
	then we have $s, s' \in \Gamma(\CE )$
	well-defined and $\phi_t(x(t)) = s(t)x(t)s'(t)$ for all $x \in
	A$. Normalizing $s$ and $s'$ as in Remark~\ref{rem:normalise},
	we get $c,d \in \Gamma_b(\CE ) = M(A)$ (Remark \ref{rem:homog} (f)) with
	$\phi = M_{c, d}$.
\end{proof}

\begin{notation}
	If
	$A = \Gamma_0(\CE)$ is homogeneous and
	$\phi \in \IB_1(A)$,
	we consider the cozero set
	$U = \mathrm{coz}(\phi)$
	 (open by
	Corollary~\ref{cor:phi-t-continuous}) and then, for
	$B = \Gamma_0(\CE |_U)$,
	$\phi|_U \in \IB_{0,1}^{\mathrm{nv}} (B)$
	(see Remark \ref{rem:homog} (c)).
	We occasionally
	use $\CL_\phi$ for the subbundle $\CL_{\phi|_U}$ of $\CE |_U$.
\end{notation}

\begin{proposition}
	\label{prop:bundletophi}
	Let $A=\Gamma_0(\CE )$ be an $n$-homogeneous $C^*$-algebra such that $X$ is $\sigma$-compact. If $\CL$ is a complex line subbundle
	of $\CE $, then there is $\phi \in \IB_{0,1}^{\mathrm{nv}} (A)$ with
	$\CL _\phi = \CL $.
\end{proposition}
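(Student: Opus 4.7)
The plan is to construct $\phi$ fibrewise as a completely positive two-sided multiplication $\phi_t = f(t) M_{s(t), s(t)^*}$, where $s$ is a ``unit-length'' section of $\CL$ with respect to a suitable Hermitian metric and $f$ is a strictly positive scalar function that enforces decay at infinity. The main obstacle is that the bundle $\CL$ may not be globally trivial, so no global nowhere-vanishing section exists. This is bypassed by patching local unit-length sections: the Hermitian constraint forces the transition functions to have modulus one, and since $M_{s,s^*}$ is invariant under $s \mapsto u s$ for $|u|=1$, the locally defined completely positive multiplications agree on overlaps.

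Concretely, I would first endow $\CL$ with the Hermitian metric inherited from the Hilbert bundle structure $(\CE, \langle \cdot,\cdot\rangle_2)$ of Remark~\ref{rem:homog}(g). Choose an open cover $\{U_\alpha\}$ of $X$ on which $\CL$ is trivial, and on each $U_\alpha$ pick a continuous section $s_\alpha \colon U_\alpha \to \CL|_{U_\alpha}$ with $\|s_\alpha(t)\|_2 = 1$ for all $t \in U_\alpha$. On overlaps $U_\alpha \cap U_\beta$ we have $s_\alpha = u_{\alpha\beta} s_\beta$ for a continuous $u_{\alpha\beta} \colon U_\alpha \cap U_\beta \to \IC$ with $|u_{\alpha\beta}| = 1$, hence
\[
s_\alpha(t) x s_\alpha(t)^* = u_{\alpha\beta}(t) \overline{u_{\alpha\beta}(t)} \, s_\beta(t) x s_\beta(t)^* = s_\beta(t) x s_\beta(t)^*
\]
for all $x \in \CE_t$. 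Next, since $X$ is $\sigma$-compact and locally compact Hausdorff, there exists $f \in C_0(X)$ with $f(t) > 0$ for all $t \in X$. Define $\phi \colon A \to A$ by
\[
\phi(a)(t) = f(t)\, s_\alpha(t)\, a(t)\, s_\alpha(t)^* \qquad (t \in U_\alpha, \ a \in A),
\]
which is well-defined by the compatibility above.

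It remains to check that $\phi$ has the required properties. Boundedness follows from $\|\phi(a)(t)\| \leq f(t)\|s_\alpha(t)\|^2\|a(t)\| \leq \|f\|_\infty\|a\|$ (using (\ref{eqn:innerprodE}) to bound the operator norm of $s_\alpha(t)$ by its $\|\cdot\|_2$-norm), and $\phi$ preserves each ideal $I_S$ (for closed $S \subseteq X$) by fibrewise construction, so $\phi \in \IB(A)$. On each fibre $\phi_t = f(t) M_{s_\alpha(t), s_\alpha(t)^*}$ is a nonzero two-sided multiplication, showing $\phi \in \IB_1^{\mathrm{nv}}(A)$. Moreover $\|\phi_t\| = f(t)\|s_\alpha(t)\|^2 \leq f(t)$, so $(t \mapsto \|\phi_t\|) \in C_0(X)$ and $\phi \in \IB_0(A)$. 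Finally, following the construction of $\CL_\phi$ in Proposition~\ref{prop:bundle}, on $U_\alpha$ the bundle $\CL_\phi$ is generated by $\sqrt{f(t)}\, s_\alpha(t)$, which spans the same complex line as $s_\alpha(t)$ since $f(t) > 0$. Hence $\CL_\phi = \CL$ as subbundles of $\CE$.
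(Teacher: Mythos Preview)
Your proof is correct and takes a genuinely different route from the paper's. The paper covers $X$ by countably many $\sigma$-compact open sets $U_i$ over which $\CE$ is trivial, takes spanning sections $e^i_j$ of $\CE|_{U_i}$ extended by zero to elements of $A$, orthogonally projects them onto $\CL$ to obtain $f^i_j \in A$, and sets $\phi = \sum_i 2^{-i} \sum_j M_{f^i_j,(f^i_j)^*}$; at each fibre all the $f^i_j(t)$ are scalar multiples of a single unit vector in $\CL_t$, so $\phi_t$ collapses to a positive multiple of $M_{s,s^*}$. You instead go straight to the key invariance: normalising local sections $s_\alpha$ of $\CL$ in the $\|\cdot\|_2$-norm forces the transition functions to be unimodular, and $M_{s,s^*}$ is unchanged under $s \mapsto us$ with $|u|=1$, so the locally defined operators $M_{s_\alpha(t),s_\alpha(t)^*}$ patch to a global map without any summation. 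Your argument is shorter and more transparent; the paper's version has the minor advantage that $\phi$ is visibly an operator-norm limit of finite sums of $M_{a,a^*}$ with $a\in A$, but this is not used elsewhere. One cosmetic point: in your last paragraph the factor $\sqrt{f(t)}$ is immaterial---whether you write $\phi_t = M_{f(t)s_\alpha(t),\,s_\alpha(t)^*}$ or $M_{\sqrt{f(t)}s_\alpha(t),\,\sqrt{f(t)}s_\alpha(t)^*}$, the line spanned is $\IC s_\alpha(t) = \CL_t$.
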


\begin{proof}
Let $\langle \cdot, \cdot \rangle_2$ be as in Remark \ref{rem:homog} (g).
With respect to this inner product we have a complementary subbundle $\CL ^\perp$ of $\CE $ such that $\CL  \oplus \CL ^\perp
	= \CE $.

Note that, by local compactness, $X$ has a base consisting of
$\sigma$-compact open sets. (If $t_0 \in U \subset X$ with $ U$ open,
choose a compact neighborhood $N$ of $t_0$ contained in $U$ and a
function $f \in C_0(X)$ supported in $N$ with $f(t_0) = 1$. Take $V =
\{ t \in X : |f(t)| > 0\}$.)

Since $X$ is $\sigma$-compact (and since every $\sigma$-compact
space is Lindel\"{o}f), we can find a countable open cover
$\{U_i\}_{i=1}^\infty$ of $X$ such that each restriction $\CE
|_{U_i}$ is trivial and each $U_i$ is $\sigma$-compact.
Then we can find $n^2$
norm-one sections
$(e_j^i)_{j=1}^{n^2}$ of $\Gamma_0(\CE |_{U_i})\cong C_0(U_i, \IM_n)$ such that
\[ \SPAN\{e^i_1(t), \cdots,  e^i_{n^2}(t)\} = \CE _t \cong \IM_n \quad
	\mbox{for all } t \in U_i.
\]
By extending outside $U_i$  with $0$ we may assume that $e_j^i$ are globally defined, so that $e^i_j \in A$.
Define $f_j^i(t)$ as the orthogonal projection of  $e_j^i$ into the
fibre $\CL_t$, so that $f_j^i \in A$.
We define
\[
\phi : A \to A \quad \mbox{by} \quad \phi =\sum_{i=1}^\infty \frac{1}{2^i} \left(\sum_{j=1}^{n^2}  M_{f_j^i, (f_j^i)^*}\right).
\]
Note that $\phi \in \IB_0(A)$ as a sum of an absolutely convergent
series of operators in $\IB_0(A)$ (and $\IB_0(A)$ is norm closed).
We claim that $\phi \in \IB_{0,1}^{\mathrm{nv}}(A)$ and $\CL _\phi
= \CL $. Indeed, for an arbitrary point $t \in X$ choose a norm-one
(in \Cstar-norm)
vector $s\in \CL _t$. Then there are scalars
$\lambda_j^i$ with $f_j^i(t) = \lambda_j^i \cdot s$ and $|\lambda_j^i|
= \|f_j^i(t)\|\leq \sqrt n\|e_j^i(t)\|=\sqrt{n}$ (by
(\ref{eqn:innerprodE})). Then
\[
\phi_t = \left(\sum_{i=1}^\infty \frac{1}{2^i} \left(\sum_{j=1}^{n^2} |\lambda_j^i|^2 \right)\right) \cdot M_{s,s^*}.
\]
This shows that $\phi \in \IB_{0,1}^{\mathrm{nv}}(A)$ and that for all $t \in X$ we have $\phi_t=M_{a(t),a^*(t)}$ for some $a(t)\in \CL_t$.
By the proof of Proposition \ref{prop:bundle} we conclude $\CL_\phi=\CL$.
\end{proof}

\begin{remark}\label{rem:subbundlematrix}
Let $\CL$  be a complex line bundle over a locally compact Hausdorff space $X$.
\begin{itemize}
\item[(a)] $\CL$ is isomorphic to a subbundle of some $\IM_2$-bundle $\CE$.
Indeed, let $\mathcal{F}=\CL \oplus (X \times \IC)$. Then $\mathcal{E}=\mathrm{Hom}(\mathcal{F},\mathcal{F})=\mathcal{F} \otimes \mathcal{F}^*$ is
an $\IM_2$-bundle with the desired property (see \cite[Example 3.5]{2007Phillips}).

Further, if $X$ is $\sigma$-compact, then $A=\Gamma_0(\CE)$ (with $\CE$ as above) is an example of a $2$-homogeneous $C^*$-algebra with $\mathrm{Prim}(A)=X$ that allows an operator $\phi \in \IB_{0,1}^{\mathrm{nv}}(A)$ such that $\CL_\phi \cong \CL$ (by Proposition \ref{prop:bundletophi}).

\item[(b)] Suppose that $\CL$ is a subbundle of a trivial bundle $X \times \IC^m$. If $p=\lceil\sqrt{m}\rceil$, then for each $n \geq p$ we can regard $\CL$ as a subbundle of a trivial matrix bundle $X \times \IM_n$, using some linear embedding $\IC^m \hookrightarrow \IM_n$.
\end{itemize}
\end{remark}

\begin{remark}\label{rem:para}
If the space $X$ is paracompact, it is well-known that locally
trivial complex line bundles over $X$ are classified by the homotopy
classes of maps from $X$ to $\IC P^\infty$ and/or by the elements
of the second integral \v Cech cohomology $\check{H}^2(X;\IZ)$ (see e.g.
\cite[Corollary 3.5.6 and Theorem 3.4.7]{HusBk}
and \cite[Proposition 4.53 and Theorem
4.42]{RaeWillBk}.) By \cite{HusBk}, we know that complex line bundles over $X$ are pullbacks of the canonical bundle over $\IC P^\infty$ (via a map from $X$ to $\IC P^\infty$).
\end{remark}

In light of Proposition \ref{prop:bundle} and Remark \ref{rem:para}, for a given a homogeneous $C^*$-algebra $A=\Gamma_0(\CE )$ we  define a map
\begin{equation}\label{eq:theta}
\theta : \IB_1^{\mathrm{nv}}(A)\to \check{H}^2(X;\IZ)
\end{equation}
which sends an operator $\phi \in \IB_1^{\mathrm{nv}}(A)$ to the corresponding class of $\CL_\phi$ in $\check{H}^2(X;\IZ)$. By Proposition \ref{prop:bundle} we have
$\theta^{-1}(0)=\TM^{\mathrm{nv}}(A)$. As a direct consequence of this observation we have:

\begin{corollary}
	\label{cor:H2-cond}
Let $A$ be a homogeneous $C^*$-algebra such that $X$ is paracompact. If $\check{H}^2(X;\IZ)=0$ then $\mathrm{IB}_1^{\mathrm{nv}}(A) = \TM^{\mathrm{nv}}(A)$.
\end{corollary}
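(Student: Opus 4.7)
The plan is to read off the conclusion directly from the map $\theta$ in (\ref{eq:theta}) together with the classification of complex line bundles recalled in Remark~\ref{rem:para}. The inclusion $\TM^{\mathrm{nv}}(A) \subseteq \IB_1^{\mathrm{nv}}(A)$ is immediate from Notation~\ref{not:IB-nv}, so the only nontrivial point is the reverse inclusion under the assumption $\check{H}^2(X;\IZ)=0$.

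First I would fix an arbitrary $\phi \in \IB_1^{\mathrm{nv}}(A)$ and invoke Proposition~\ref{prop:bundle} to obtain the associated complex line subbundle $\CL_\phi$ of $\CE$. Since $X$ is paracompact, Remark~\ref{rem:para} tells us that isomorphism classes of complex line bundles over $X$ are parameterised by $\check{H}^2(X;\IZ)$, with the trivial bundle corresponding to the zero element. The hypothesis $\check{H}^2(X;\IZ)=0$ therefore forces every such bundle, and in particular $\CL_\phi$, to be globally trivial. The second statement of Proposition~\ref{prop:bundle} then gives $\phi \in \TM(A)$, hence $\phi \in \TM^{\mathrm{nv}}(A)$ since $\phi$ was chosen in $\IB_1^{\mathrm{nv}}(A)$. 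Equivalently, one can simply note that $\theta$ from (\ref{eq:theta}) now has trivial codomain, so $\IB_1^{\mathrm{nv}}(A) = \theta^{-1}(0) = \TM^{\mathrm{nv}}(A)$.

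The proof is a one-line application once the tools are in place: there is no genuine obstacle here, because all of the work has already been done in the construction of the bundle invariant $\CL_\phi$ (Proposition~\ref{prop:bundle}) and in the cohomological classification of complex line bundles over paracompact spaces. The role of paracompactness is precisely to ensure that bundle isomorphism is detected by $\check{H}^2(X;\IZ)$, so that the vanishing hypothesis on cohomology transfers cleanly to triviality of $\CL_\phi$.
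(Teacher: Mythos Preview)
Your argument is correct and is essentially identical to the paper's: the corollary is stated as an immediate consequence of the observation that $\theta^{-1}(0)=\TM^{\mathrm{nv}}(A)$, so when $\check{H}^2(X;\IZ)=0$ the whole of $\IB_1^{\mathrm{nv}}(A)$ collapses onto $\TM^{\mathrm{nv}}(A)$, exactly as you wrote.
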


We will now give some sufficient conditions on a trivial homogeneous $C^*$-algebra $A$ that will ensure the surjectivity of the map $\theta$. To do this, first  recall that a topological space $X$ is said to have the \emph{Lebesgue covering dimension} $d<\infty$ if $d$ is the smallest non-negative integer
with the property that each finite open cover of $X$
has a refinement in which no point of $X$
is included in more than $d+1$ elements (see e.g. \cite{EngDimThBk}). In
this case we write $d=\dim X$.

\begin{proposition}\label{prop:complexH2not0}
Let $X$ be a compact Hausdorff space with $\dim X \leq d<\infty$. For $n \geq 1$ let  $A_n = C(X,\IM_n )$. If $p= \left\lceil \sqrt{(d+1)/2} \right\rceil$ then for any $n \geq p$ the mapping  $\theta$ from (\ref{eq:theta}) is surjective. In particular, if $\check{H}^2(X;\IZ)\neq 0$, then  $\TM^{\mathrm{nv}}(A_n) \varsubsetneq \IB_1^{\mathrm{nv}}(A_n) $ for all $n \geq p$.
 \end{proposition}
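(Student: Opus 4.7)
My plan is to reduce the statement to Proposition \ref{prop:bundletophi} by showing that \emph{every} complex line bundle $\CL $ over $X$ can be realized as a subbundle of the trivial matrix bundle $X \times \IM_n $ whenever $n \geq p$. Once that is done, since $X$ is compact (hence $\sigma$-compact), Proposition~\ref{prop:bundletophi} produces an operator $\phi \in \IB_{0,1}^{\mathrm{nv}}(A_n) \subset \IB_1^{\mathrm{nv}}(A_n)$ with $\CL _\phi \cong \CL $, so $\theta(\phi) = [\CL ]$. The ``in particular'' clause is then immediate from $\theta^{-1}(0) = \TM^{\mathrm{nv}}(A_n)$ (Proposition~\ref{prop:bundle}).

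The key step is to bound the rank of the ambient trivial bundle. By Remark~\ref{rem:para}, a complex line bundle $\CL $ on $X$ is classified by a homotopy class of maps $f \colon X \to \IC P^\infty$. I would show that $f$ can be deformed into $\IC P^{m-1}$ with $m = \lceil (d+1)/2 \rceil$ using obstruction theory: with the standard CW structure on $\IC P^\infty$ (one cell in each even dimension) the relative cells of the pair $(\IC P^\infty, \IC P^{m-1})$ have dimensions $2m, 2m+2, \ldots$, so $(\IC P^\infty, \IC P^{m-1})$ is $(2m-1)$-connected. Since $\dim X \leq d \leq 2m - 1$, the standard cellular approximation/obstruction argument produces a homotopy of $f$ into $\IC P^{m-1}$. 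The tautological line bundle over $\IC P^{m-1}$ is canonically a subbundle of $\IC P^{m-1} \times \IC^m$, so pulling back exhibits $\CL $ as a subbundle of $X \times \IC^m$.

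Finally, for $n \geq p$, an elementary check gives $n^2 \geq p^2 \geq \lceil (d+1)/2 \rceil = m$ (using the identity $\lceil \sqrt{x}\,\rceil^{2} \geq \lceil x \rceil$ for $x \geq 0$). Consequently any linear embedding $\IC^m \hookrightarrow \IM_n $ realizes $\CL $ as a subbundle of the trivial bundle $X \times \IM_n $, as in Remark~\ref{rem:subbundlematrix}(b). Applying Proposition~\ref{prop:bundletophi} to this subbundle completes the surjectivity of $\theta$, and hence the proof.

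The main obstacle I anticipate is the obstruction-theoretic bound on $m$: obstruction theory is cleanest for CW complexes, whereas here $X$ is only assumed compact Hausdorff of finite covering dimension. To handle this, I would either (i) approximate $X$ by an inverse limit of finite CW complexes / compact ANRs (which respects the covering dimension bound) and pass complex line bundles through the limit via the \v{C}ech-cohomological classification, or (ii) invoke the version of obstruction theory valid for paracompact spaces of bounded covering dimension mapping into CW complexes, which gives cellular approximation up to the expected dimension. Either way, the topological content is the $(2m-1)$-connectedness of $(\IC P^\infty, \IC P^{m-1})$; the rest of the argument is algebraic bookkeeping.
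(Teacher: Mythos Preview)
Your proposal is correct and follows essentially the same approach as the paper. The paper implements your option~(i) by invoking \cite[Lemma~2.3]{2007Phillips} to factor the line bundle through a finite complex $Y$ with $\dim Y \leq d$, and then applies cellular approximation on $Y$ (packaged as Lemma~\ref{lem:3compl}) exactly as you outline; the remaining steps (embedding $\IC^m \hookrightarrow \IM_n$ via Remark~\ref{rem:subbundlematrix}(b) and invoking Proposition~\ref{prop:bundletophi}) are identical.
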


To prove this will use the following fact (which may be known):
\begin{lemma}\label{lem:3compl}
Let $X$ be a CW-complex with $\dim X =d$.
Then each complex line bundle $\CL$ over $X$ is isomorphic to a
line subbundle of $X \times \IC^m$ with $m = \lceil (d+1)/2
\rceil$.
\end{lemma}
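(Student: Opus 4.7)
The plan is to reduce the statement to a standard cellular approximation argument in the classifying space $\IC P^\infty = BU(1)$. By Remark~\ref{rem:para}, every complex line bundle $\CL$ over a paracompact space (in particular over a CW-complex) is pulled back from the tautological line bundle $\gamma$ over $\IC P^\infty$ via a classifying map $f : X \to \IC P^\infty$. Set $m = \lceil (d+1)/2 \rceil$. I would homotope $f$ into the subcomplex $\IC P^{m-1} \subset \IC P^\infty$; once this is done, pulling back the canonical (tautological) embedding $\gamma_{m-1} \hookrightarrow \IC P^{m-1} \times \IC^m$ --- whose fibre over a line $L$ is literally $L \subset \IC^m$ --- yields a realization of $\CL \cong g^*(\gamma_{m-1})$ as a line subbundle of $X \times \IC^m$.

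The key step is thus to show that any map $f : X \to \IC P^\infty$ from a CW-complex $X$ with $\dim X \leq d$ can be homotoped into $\IC P^{m-1}$. In the standard cell decomposition of $\IC P^\infty$ there is exactly one cell in each even non-negative dimension, and $\IC P^{m-1}$ is the $(2m-2)$-skeleton. Hence the relative pair $(\IC P^\infty, \IC P^{m-1})$ has cells only in dimensions $\geq 2m$, so the inclusion $\IC P^{m-1} \hookrightarrow \IC P^\infty$ is $(2m-1)$-connected. Because $d \leq 2m - 1$ by the choice of $m$, the cellular approximation theorem (applied after first homotoping $f$ to a cellular map) produces the desired homotopy of $f$ to a map $g$ whose image lies in $\IC P^{m-1}$.

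There is no substantive obstacle: the proof is just the translation of the dimension bound $d \leq 2m - 1$ into the connectivity of the inclusion $\IC P^{m-1} \hookrightarrow \IC P^\infty$, together with the observation that the tautological line bundle on $\IC P^{m-1}$ is already presented as a subbundle of $\IC P^{m-1} \times \IC^m$. The CW-complex hypothesis on $X$ is used only to invoke cellular approximation; if one wanted to extend the lemma to a wider class of base spaces, the natural weakening would be to assume $X$ is a paracompact space of covering dimension $\leq d$ and to replace cellular approximation by the corresponding obstruction-theoretic vanishing, which holds since $\IC P^\infty$ is simply connected and $\pi_k(\IC P^\infty, \IC P^{m-1}) = 0$ for $k \leq 2m-1$.
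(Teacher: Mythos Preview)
Your argument is correct and essentially identical to the paper's: both use the classifying map $X \to \IC P^\infty$, apply cellular approximation to land in the $d$-skeleton $\IC P^{m-1}$ (using that $\IC P^\infty$ has one cell in each even dimension), and then pull back the tautological inclusion $\gamma_{m-1} \hookrightarrow \IC P^{m-1} \times \IC^m$. Your phrasing via the $(2m-1)$-connectivity of the inclusion $\IC P^{m-1} \hookrightarrow \IC P^\infty$ is just a reformulation of the same cellular step.
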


\begin{proof}
We consider $\IC P^\infty$ as a CW-complex in the usual way (see
\cite[Example 0.6]{HBk}). Let $\Psi: X \to  \IC P^\infty$ be the
classifying map of the bundle $\CL $ (Remark~\ref{rem:para}). Using
the cellular approximation theorem \cite[Theorem 4.8]{HBk} and
Remark \ref{rem:para} we may assume that the map $\Psi$ is cellular,
so that $\Psi$ takes the $k$-skeleton of $X$ to the $k$-skeleton
of $\IC P^\infty$ for all $k$.
Since $\IC P^\infty$ has one cell
in each even dimension,
$\Psi(X)$ is contained in the $d$-skeleton of $\IC P^\infty$, which
is the $(d-1)$-skeleton if $d$ is odd, and
is $\IC P^{m-1}$.

Hence $\CL $ is isomorphic to the pullback $\Psi^{*}(\gamma)$ of
the canonical line bundle
$\gamma$ on $\IC P^{m-1}$ (Remark \ref{rem:para}), a subbundle of the
trivial bundle $\IC P^{m-1} \times \IC^m$.
\end{proof}

\begin{proof}[Proof of Proposition \ref{prop:complexH2not0}]
Let $\CL$ be any complex line bundle over $X$. By the proof
of \cite[Lemma 2.3]{2007Phillips} there exists a finite complex $Y$
with $\dim Y \leq d$, a continuous function $f: X \to Y$, and a line
bundle $\mathcal{L}'$ over $Y$ such that $\CL \cong f^*(\CL')$. By Lemma
\ref{lem:3compl} we conclude that
$\CL'$ is isomorphic to a line subbundle of $Y \times \IC^m$, with $m =
\lceil (d+1)/2\rceil$. Hence, $\CL$ is isomorphic to a line subbundle
of $X \times \IC^m$. By Remark \ref{rem:subbundlematrix} (b) if $n
\geq p= \lceil\sqrt{m}\rceil=
\left\lceil \sqrt{ (d+1)/2} \right\rceil$ ($\lceil\sqrt{\lceil x\rceil}\rceil=\lceil\sqrt{x}\rceil$ for all $x \geq 0$),
we can assume that $\CL$ is already a subbundle of $X \times \IM_n$. By
the proof of Proposition \ref{prop:bundletophi} we can find an operator
$\phi \in \IB_1^{\mathrm{nv}}(A_n)$ such that $\CL_\phi=\CL$. By Remark \ref{rem:para} we conclude that the map $\theta$ is surjective. That  $\TM^{\mathrm{nv}}(A) \subsetneq \IB_1^{\mathrm{nv}}(A_n)$ ($n \geq p$) when $H^2(X;\IZ)\neq 0$ follows directly from previous observations and Proposition \ref{prop:bundle}.
\end{proof}

\begin{example}\label{ex:spheretorusklein}
Note that if
$X$ is either the $2$-sphere, the
$2$-torus or the Klein bottle, then it is well-known
that $\check{H}^2(X; \IZ) \neq 0$. In particular, if $A=C(X, \IM_n )$
($n\geq 2$) then Proposition
\ref{prop:complexH2not0} shows that
$\TM^{\mathrm{nv}}(A) \subsetneq \IB_1^{\mathrm{nv}} (A)$.
\end{example}

\begin{theorem}\label{thm:IB1equi}
Let $A=\Gamma_0(\CE )$ be a homogeneous $C^*$-algebra. Consider the following conditions:
\begin{itemize}
\item[(a)] For every open subset $U \subset X$, each complex line subbundle of $\CE|_U$ is trivial.
\item[(b)] $\mathrm{IB}_1(A)=\TM(A)$.
\item[(c)] $\mathrm{IB}_{0,1}(A)=\TM_0(A)$.
\end{itemize}
Then (a) $\Rightarrow$ (b) $\Rightarrow$ (c). If $A$ is separable, conditions (a), (b) and (c) are equivalent.
\end{theorem}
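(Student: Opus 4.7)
The plan is to establish (a) $\Rightarrow$ (b) $\Rightarrow$ (c) in full generality and to close the loop via (c) $\Rightarrow$ (a) under the separability hypothesis. The unifying idea will be to reduce every question about an operator $\phi \in \IB_1(A)$ to its restriction to the open cozero set $U = \mathrm{coz}(\phi)$ (open by Corollary~\ref{cor:phi-t-continuous}), where Proposition~\ref{prop:bundle} translates membership in $\TM$ into triviality of the associated line subbundle $\CL_\phi$ of $\CE|_U$.

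For (a) $\Rightarrow$ (b), I would take $\phi \in \IB_1(A)$, note that $\phi|_U \in \IB_1^{\mathrm{nv}}(\Gamma_0(\CE|_U))$, and invoke (a) together with Proposition~\ref{prop:bundle} to write $\phi|_U = M_{c,d}$ for some $c,d \in \Gamma_b(\CE|_U) = M(\Gamma_0(\CE|_U))$. After the normalization of Remark~\ref{rem:normalise} I may assume $\|c(t)\| = \|d(t)\| = \sqrt{\|\phi_t\|}$ on $U$. Since $t \mapsto \|\phi_t\|$ is continuous on $X$ (Corollary~\ref{cor:phi-t-continuous}) and vanishes on $X \setminus U$, extending $c$ and $d$ by zero outside $U$ produces continuous bounded sections $\tilde c, \tilde d$ of $\CE$, i.e.\ elements of $M(A)$, and the equality $\phi = M_{\tilde c, \tilde d}$ can be checked fibrewise. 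The implication (b) $\Rightarrow$ (c) follows by the same device: given $\phi \in \IB_{0,1}(A)$, write $\phi = M_{a,b}$ using (b), normalize, and observe that because $\|\phi_t\| \in C_0(X)$, the normalized $\tilde a, \tilde b$ (whose norms equal $\sqrt{\|\phi_t\|}$) lie in $A$.

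For (c) $\Rightarrow$ (a) under separability I would argue by contraposition. Given an open $U \subset X$ and a nontrivial complex line subbundle $\CL$ of $\CE|_U$, separability of $A$ makes $X$ second countable, so $U$ is automatically $\sigma$-compact, and Proposition~\ref{prop:bundletophi} applied to $\Gamma_0(\CE|_U)$ furnishes $\phi_0 \in \IB_{0,1}^{\mathrm{nv}}(\Gamma_0(\CE|_U))$ with $\CL_{\phi_0} = \CL$. The explicit formula for $\phi_0$ uses only sections supported in open sets contained in $U$, so the same formula, with those sections viewed as elements of $A$ via extension by zero, defines an operator $\phi \in \IB_{0,1}(A)$ that restricts to $\phi_0$ on $U$. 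Were $\phi$ in $\TM_0(A)$, its restriction to $U$ would place $\phi_0$ in $\TM(\Gamma_0(\CE|_U))$, and Proposition~\ref{prop:bundle} would force $\CL$ to be trivial, contradicting the choice of $\CL$.

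The main technical nuisance throughout these arguments will be to verify that the various extensions-by-zero yield sections which are continuous at boundary points $t_0 \in \partial U$. In each case this reduces to showing that, in any local trivialization of $\CE$ near $t_0$, the relevant section tends to $0$ in $\IM_n$; this in turn follows from the fact that its $C^*$-norm is proportional to $\sqrt{\|\phi_t\|}$, which by Corollary~\ref{cor:phi-t-continuous} is continuous on $X$ and vanishes at $t_0$.
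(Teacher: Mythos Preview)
Your proposal is correct and follows essentially the paper's approach: the implications (a) $\Rightarrow$ (b) $\Rightarrow$ (c) match the paper's argument almost verbatim (the paper phrases (b) $\Rightarrow$ (c) as ``take intersections with $\IB_0(A)$'', invoking $\TM_0(A) = \TM(A) \cap \IB_0(A)$ from Notation~\ref{not:IB-nv}, which is what you unpack). The only structural difference is that the paper closes the loop via (c) $\Rightarrow$ (b) $\Rightarrow$ (a) --- using a strictly positive $f \in C_0(X)$ to pass from $\IB_1$ to $\IB_{0,1}$ for the first step --- whereas you go directly (c) $\Rightarrow$ (a) by contraposition, thereby avoiding that trick; both routes hinge on the same use of Proposition~\ref{prop:bundletophi} and the $\sigma$-compactness of open subsets of a second-countable $X$.
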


\begin{proof}
(a) $\Rightarrow$ (b):
	Assume (a) holds and $\phi \in \IB_1(A)$.
	Let $U = \mathrm{coz}(\phi)$ (open by
	Corollary~\ref{cor:phi-t-continuous}).
	By Proposition~\ref{prop:bundle} we may assume that $U\neq X$.
Let $B=\Gamma_0(\CE|_U)$ and let $\phi|_U$ be the restriction of $\phi$ to $B$.
	Then  $\phi|_U\in \IB_{0,1}^{\mathrm{nv}} (B)$. By (a), $\CL _{\phi}$ is
	trivial (on $U$) and by Proposition~\ref{prop:bundle} we have $\phi|_U \in \TM(B)$, that is $\phi|_U =
	M_{c, d}$ for some $c, d \in M(B) = \Gamma_b(\CE|_U)$. By
	Remark~\ref{rem:normalise}, we can suppose that $\|c(t)\|^2 = \|d(t)\|^2 =
		\|\phi_t\|$ for $t \in U$, so that $c,d \in B$. We can then define $a, b \in A$ by $a(t)
		= b(t) = 0$ for $t \in X \setminus U$ and, for $t \in U$,
		$a(t) = c(t)$, $b(t) =
		d(t)$. Then we have $\phi = M_{a,b} \in \TM_0(A)\subseteq \TM(A)$.

		(b) $\Rightarrow$ (c): Take intersections with $\IB_0(A)$.

\smallskip

    Now assume that $A$ is separable, so that $X$ is second-countable.

    (c) $\Rightarrow$ (b): If $\phi \in \IB_1(A)$, take a strictly positive
		function $f \in C_0(X)$
		and define $\psi \in \IB_{0,1}(A)$ by $\psi_t = f(t)^2 \phi_t$. By (c) and
		 Remark~\ref{rem:normalise} we have $\psi = M_{c,d}$ for $c, d \in A$
		 with $\|c(t)\|^2 = \|d(t)\|^2 = \|\psi_t\|$. We can define $a, b \in
		 M(A) = \Gamma_b(\CE)$ by $a(t) = c(t)/f(t)$ and
		 $b(t) = d(t)/f(t)$ to get $\phi =	 M_{a,b}\in \TM(A)$.

	(b) $\Rightarrow$ (a):
	Assume (b) holds. Let $U$ be an open subset of $X$ and $\CL $ a complex line subbundle of $\CE|_U$.
    By Proposition~\ref{prop:bundletophi} applied to $B = \Gamma_0(\CE|_U)$ ($U$ is $\sigma$-compact since $X$ is second-countable), there is
	$\psi  \in \IB_0(B)$ with
	$\CL _\psi = \CL $.
	Since $(t \mapsto \|\psi_t\|) \in C_0(U)$, we can define
	$\phi \in \IB_0(A)$ by $\phi_t = \psi_t$ for $t \in U$ and $\phi_t
	 = 0 $ for $t \in X \setminus U$.
	By (b), $\phi = M_{a,b}$ for $a, b \in M(A) = \Gamma_b(\CE)$ and then
	$a|_U$ defines a nowhere vanishing section of $\CL $.
\end{proof}

\begin{corollary}\label{cor:H2-R2sub}
	Let $A =\Gamma_0(\CE)$ be an
	$n$-homogeneous \Cstar-algebra with $n \geq 2$.
 \begin{itemize}
\item[(a)] If $X$ is second-countable with $\dim X<2$, or if $X$ is
	(homeomorphic to) a subset of a non-compact connected $2$-manifold, then
\[\mathrm{IB}_{0,1}(A)=\TM_0(A) \ \mbox{ and } \ \mathrm{IB}_{1}(A)=\TM(A).
\]
\item[(b)] If $X$ is $\sigma$-compact and contains a nonempty open subset
	homeomorphic to (an open subset of) $\IR^d$ for some $d \geq 3$, then
\[
	\mathrm{IB}_{0,1}(A)\setminus \TM_0(A) \neq \emptyset \ \mbox{ and } \ \mathrm{IB}_1(A)\setminus \TM(A)\neq \emptyset.
\]
\end{itemize}
\end{corollary}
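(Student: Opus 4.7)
My plan is to apply Theorem~\ref{thm:IB1equi} for part (a) by verifying clause~(a) of that theorem, and to explicitly construct an operator for part (b). Throughout, triviality of complex line bundles over a paracompact space is detected by vanishing of $\check{H}^2(\cdot;\IZ)$ via Remark~\ref{rem:para}. In both sub-cases of part~(a), $X$ is second-countable (in the first by hypothesis, in the second because every non-compact connected $2$-manifold is second-countable), so $A$ is separable and all three clauses of Theorem~\ref{thm:IB1equi} are equivalent.

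For part~(a), I must show that every complex line subbundle of $\CE|_U$ is trivial for every open $U\subseteq X$. In the case $\dim X\leq 1$, one has $\dim U\leq 1$, so $\check{H}^2(U;\IZ)=0$ by classical dimension theory. In the case where $X$ is homeomorphic to a subset of a non-compact connected $2$-manifold $M$, I reduce (using the local compactness of $X$) to the essentially open case and invoke the classical fact that every nonempty open subset of a non-compact connected $2$-manifold is a disjoint union of non-compact $2$-manifolds (a compact connected component would be clopen in the connected non-compact $M$), each of which is homotopy equivalent to a one-dimensional CW-complex by the classification of non-compact surfaces. Hence $\check{H}^2$ vanishes on the relevant open sets, as needed.

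For part~(b), I will construct $\phi\in\IB_{0,1}(A)\setminus\TM_0(A)$; since $\TM_0(A)=\TM(A)\cap\IB_0(A)$, this same $\phi$ will also lie in $\IB_1(A)\setminus\TM(A)$. Let $V\subseteq X$ be the hypothesised open subset homeomorphic to an open set in $\IR^d$. I pick a point of $V$ where $\CE$ trivialises on a compact neighbourhood contained in $V$, transfer via the homeomorphism to $\IR^d$ to obtain an open ball $B$ over which $\CE$ is trivial, and inside $B$ choose an open $\sigma$-compact set $W$ homotopy equivalent to $S^2$: for $d=3$ a spherical shell $\{r_1<\|x-x_0\|<r_2\}$, and for $d>3$ the product of such a shell in three chosen coordinates with a small open cube in the remaining $d-3$ coordinates. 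Then $\CE|_W$ is trivial and $\check{H}^2(W;\IZ)\cong\IZ$. A generator corresponds, via a classifying map $f\colon W\to\IC P^1=S^2$, to the nontrivial line bundle $\CL=f^*(\gamma)$, where $\gamma\subset\IC P^1\times\IC^2$ is the tautological bundle; thus $\CL$ embeds as a subbundle of $W\times\IC^2$, and (since $n\geq 2$) as a subbundle of $W\times\IM_n\cong\CE|_W$ via Remark~\ref{rem:subbundlematrix}(b).

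Finally, apply Proposition~\ref{prop:bundletophi} to $B_0=\Gamma_0(\CE|_W)$ (whose spectrum $W$ is $\sigma$-compact) to obtain $\psi\in\IB_{0,1}^{\mathrm{nv}}(B_0)$ with $\CL_\psi=\CL$. Define $\phi\in\IB(A)$ fibrewise by $\phi_t=\psi_t$ for $t\in W$ and $\phi_t=0$ for $t\notin W$; the continuity of $t\mapsto\|\phi_t\|$ (which vanishes at infinity and approaches $0$ on $\partial W$ because $\psi\in\IB_0(B_0)$) places $\phi$ in $\IB_{0,1}(A)$. If $\phi=M_{a,b}$ for some $a,b\in M(A)$, then $\psi=M_{a|_W,b|_W}$ on $B_0$, and Proposition~\ref{prop:bundle} would force $\CL_\psi=\CL$ to be trivial, a contradiction. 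The principal technical obstacle is the topological construction of $W$ in part~(b)---it must be simultaneously open in $X$, $\sigma$-compact, small enough for $\CE$ to trivialise over it, yet large enough in homotopy to support a nontrivial line bundle; the auxiliary input for part~(a) is the classical surface-topology fact that every non-compact surface has the homotopy type of a graph.
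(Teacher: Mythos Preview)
Your proposal is correct and follows essentially the same route as the paper: Theorem~\ref{thm:IB1equi} plus vanishing of $\check{H}^2$ for part~(a), and an explicit $S^2$-shaped open set carrying a nontrivial line subbundle (produced via Proposition~\ref{prop:bundletophi} and detected via Proposition~\ref{prop:bundle}) for part~(b). Your construction in~(b) is in fact slightly more explicit than the paper's, which simply says ``$U$ contains an open subset that has the homotopy type of $\mathbb{S}^2$'' and defers the rest to the proof of Theorem~\ref{thm:IB1equi}.

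The one place where you are thinner than the paper is the clause ``I reduce (using the local compactness of $X$) to the essentially open case'' in part~(a). An open $U\subseteq X$ is only locally compact, hence only \emph{closed} in some open subset of $M$; passing from $\check{H}^2$ of this closed set to $\check{H}^2$ of an ambient open set is not automatic. The paper isolates exactly this step as Proposition~\ref{prop:Bestvina}, whose proof uses Lemma~\ref{lem:exthomotopy} (extend a classifying map from a closed subset to an open neighbourhood, using that $\IC P^d$ is an ANR). Equivalently one can invoke tautness of \v Cech cohomology for closed subspaces of paracompact spaces, so that $\check{H}^2(U;\IZ)$ is a direct limit of $\check{H}^2$ of open neighbourhoods in $M$. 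Either way, you should name the mechanism rather than leave it implicit; once you do, your argument and the paper's coincide.
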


\begin{remark}
By a $d$-manifold we always mean a second-countable topological manifold of dimension $d$.
\end{remark}

To prove this we will use the following facts (which are well-known to topologists).

\begin{remark}\label{rem:trivfindim}
 If $X$ is a metrizable space with $\dim X =d <\infty$, then any locally trivial fibre bundle over $X$ can be trivialized over some open cover of $X$ consisting of at most $d+1$ elements. This follows from Dowker's and Ostrand's theorems \cite[Theorems~3.2.1 and~3.2.4]{EngDimThBk}.
\end{remark}

\begin{lemma}\label{lem:exthomotopy}
Let $Y$ be a metrizable space with $\dim Y =d <\infty$ and let $X$ be a closed subset of $Y$. Then any map $f: X \to \IC P^\infty$ can be, up to homotopy,
continuously extended to some open neighbourhood of $X$ in $Y$.
\end{lemma}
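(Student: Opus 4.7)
The plan is to reduce $f$ to a map into a finite-dimensional $\IC P^{m-1}$ (which, as a finite CW complex, is a compact metric absolute neighbourhood retract), and then invoke the standard neighbourhood extension property for ANRs. First, since $X$ is closed in the metrizable space $Y$, it is itself metrizable, and by the monotonicity of the covering dimension for closed subsets of metrizable spaces one has $\dim X \leq \dim Y = d$.

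Next, I would show that the classifying map $f: X \to \IC P^\infty$ is homotopic to a map into $\IC P^{m-1}$ with $m = \lceil (d+1)/2 \rceil$. This is done exactly as in the proof of Proposition~\ref{prop:complexH2not0}: applying (a slight generalization of) \cite[Lemma 2.3]{2007Phillips} to the line bundle $L := f^*\gamma$ produces a finite CW complex $X'$ with $\dim X' \leq d$, a continuous map $g: X \to X'$, and a line bundle $L'$ on $X'$ with $L \cong g^*L'$. Applying Lemma~\ref{lem:3compl} to $L'$ yields a classifying map $h: X' \to \IC P^{m-1}$, so that $h \circ g: X \to \IC P^{m-1}$ classifies $L$. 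By Remark~\ref{rem:para}, $h \circ g$ is then homotopic to $f$ as maps into $\IC P^\infty$.

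Finally, since $\IC P^{m-1}$ is a finite CW complex and hence a compact metric ANR, and since $X$ is closed in the metrizable space $Y$, the standard neighbourhood extension property for ANRs produces a continuous map $H: V \to \IC P^{m-1}$ on some open neighbourhood $V$ of $X$ in $Y$ with $H|_X = h \circ g$. Composing $H$ with the inclusion $\IC P^{m-1} \hookrightarrow \IC P^\infty$ gives a continuous map $V \to \IC P^\infty$ whose restriction to $X$ equals $h \circ g \simeq f$. This is the extension up to homotopy asserted by the lemma.

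The only step requiring real care is the extension of \cite[Lemma 2.3]{2007Phillips} from a compact base to a metrizable base of finite covering dimension. Phillips' proof, however, rests on trivializing $L$ over a finite open cover and then constructing a classifying map by a partition-of-unity argument; both ingredients are available to us through Ostrand's theorem (already invoked in Remark~\ref{rem:trivfindim}) together with the paracompactness of metrizable spaces, so the argument carries over essentially verbatim. All remaining ingredients are already in place in the paper.
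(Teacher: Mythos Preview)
Your overall architecture matches the paper's exactly: reduce $f$, up to homotopy, to a map into a finite-dimensional projective space, and then extend using the ANR property. The paper's execution of the first step is, however, more direct than yours. Rather than invoking (and generalising) Phillips' Lemma~2.3 and then Lemma~\ref{lem:3compl}, the paper simply observes (via Remark~\ref{rem:trivfindim}, i.e.\ Dowker--Ostrand) that the line bundle $\CL=f^*\gamma$ trivialises over an open cover of $X$ with at most $d+1$ members, and that this already yields a classifying map $g:X\to\IC P^d$ (cf.\ \cite[\S3.5]{HusBk}); one then extends $g$ using that $\IC P^d$ is an ANR. In other words, the very ingredient you isolate as ``requiring real care'' (extending Phillips' lemma to noncompact metrizable bases via Ostrand) is the step at which the paper notices that one already has a map to $\IC P^d$ in hand, making the detour through Phillips and Lemma~\ref{lem:3compl} superfluous. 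Your route does give the sharper target $\IC P^{m-1}$ with $m=\lceil (d+1)/2\rceil$, but that extra precision is not needed here. One small caution: your claimed verbatim generalisation of Phillips' lemma asserts the existence of a finite complex $X'$ with $\dim X'\le d$, whereas the Ostrand-plus-partition-of-unity argument you sketch most naturally lands you in $\IC P^d$, which has CW dimension $2d$; this is harmless for the present lemma (any finite-dimensional target suffices for the ANR step), but worth being precise about.
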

\begin{proof}
Let $\CL$ be a complex line bundle over $X$ defined by $f$ (Remark \ref{rem:para}). By \cite[Theorem~3.1.4]{EngDimThBk},
we have $\dim X \leq \dim Y=d$. By Remark \ref{rem:trivfindim} $\CL$ can be trivialized over some open cover of $X$ consisting of (at most) $d+1$ elements. In particular, $\CL$ is determined by some map $g: X \to \IC P^d$ (see e.g. \cite[\S~3.5]{HusBk}) and by Remark \ref{rem:para} $g$ is homotopic to $f$. By \cite[Theorem V.7.1]{Hu-retracts}, finite dimensional manifolds (in particular $\IC P^d$) are ANR spaces and so by \cite[Theorem III.3.2]{Hu-retracts}, $g$ extends
(continuously) to some open neighbourhood of $X$ in $Y$.
\end{proof}

\begin{proposition}\label{prop:Bestvina}
Suppose that $X$ is a locally compact subset of a non-compact connected $2$-manifold $M$. Then $\check{H}^2(X;\IZ)=0$.
\end{proposition}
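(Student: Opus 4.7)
The plan is to reduce the computation of $\check H^2(X;\IZ)$ to that of open subsets of $M$ via \v{C}ech tautness of closed subsets of paracompact Hausdorff spaces, and then to exploit the fact that every open subset of a non-compact connected surface has trivial second \v{C}ech cohomology.

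First, since $X$ is locally compact in the Hausdorff space $M$, $X$ is locally closed, so $X = U\cap F$ with $U\subseteq M$ open and $F\subseteq M$ closed; in particular, $X$ is closed in the open (hence metrisable, hence paracompact) subspace $U$ of $M$. Next I would establish the key topological ingredient: every open $W\subseteq M$ satisfies $\check H^2(W;\IZ)=0$. A connected component $C$ of $W$ is open in $M$ (manifolds are locally connected), and cannot be compact, for otherwise $C$ would be a non-empty clopen subset of $M$, forcing $C=M$ by connectedness and contradicting non-compactness of $M$. Hence every component $C$ of $W$ is a non-compact connected $2$-manifold, which is classically homotopy equivalent to a $1$-dimensional CW complex (a \emph{spine}); this can be produced, for instance, via a proper Morse function on $C$ having no critical points of index $2$, or by invoking the classification of non-compact surfaces. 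Thus $\check H^2(C;\IZ)=0$ for each such $C$, and since $M$ is second countable $W$ has at most countably many components, so $\check H^2(W;\IZ)=0$.

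To finish, tautness for closed subsets of paracompact Hausdorff spaces yields
\[
\check H^2(X;\IZ)\;\cong\;\varinjlim_{W\supseteq X}\check H^2(W;\IZ),
\]
with $W$ ranging over open neighbourhoods of $X$ in $U$. Each such $W$ is open in $M$, so by the previous paragraph every term in the directed system vanishes, and therefore $\check H^2(X;\IZ)=0$. The principal obstacle is the homotopy-theoretic statement that every non-compact connected $2$-manifold carries a $1$-dimensional spine; once this input is accepted, the reduction via local-closedness and the tautness limit are essentially formal.
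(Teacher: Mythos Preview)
Your proof is correct and follows the same overall architecture as the paper's: reduce to showing that every open subset $W\subseteq M$ has $\check H^2(W;\IZ)=0$ (via the fact that each component of $W$ is a non-compact connected surface, hence homotopy equivalent to a $1$-complex), and then pass from open neighbourhoods back to the locally compact subset $X$ by first realising $X$ as a closed subset of an open submanifold.

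The one genuine methodological difference lies in this last passage. The paper argues by contradiction: if $\check H^2(X;\IZ)\neq 0$, take a non-null-homotopic map $f\colon X\to\IC P^\infty$ and invoke an extension lemma (Lemma~\ref{lem:exthomotopy}), which uses finite covering dimension and the ANR property of $\IC P^d$, to extend $f$ (up to homotopy) over an open neighbourhood $U$ of $X$, contradicting $\check H^2(U;\IZ)=0$. You instead appeal directly to \v{C}ech tautness of closed subsets in paracompact (here metrisable) spaces, obtaining $\check H^2(X;\IZ)\cong\varinjlim_W\check H^2(W;\IZ)=0$. Your route is cleaner and avoids the auxiliary ANR extension lemma entirely; the paper's route is more hands-on and stays within the bundle/classifying-map language used elsewhere in the article. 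Both rely on the same non-trivial topological input (the $1$-dimensional spine of a non-compact surface), which you correctly flag as the main substantive step.
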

\begin{proof}

First assume that $X=M$. Then by \cite[Theorem~2.2]{NapMoh2004}, since every $2$-manifold admits a smooth structure (a classical result for which we
have failed to find a complete modern reference),
$X$ is homotopy equivalent to a CW-complex of dimension $d < 2$. Using Lemma \ref{lem:3compl} (and Remark \ref{rem:para}) we conclude that $\check{H}^2(X;\IZ)=0$.

Now let $X$ be an open subset of $M$. Since the previous
argument applies to each connected component of $X$, we again have
$\check{H}^2(X;\IZ)=0$.

If $X$ is a locally compact subset of $M$, then $X$ is open in its
closure $\overline{X}$. Let $Y=\overline{X} \setminus X$.
Then $N=M \setminus Y$ is open in $M$ and $X$ is closed in
$N$. Suppose that $\check{H}^2(X;\IZ)\neq 0$ and let $f : X \to \IC P^\infty$ be any non-null-homotopic map (Remark \ref{rem:para}). By Lemma \ref{lem:exthomotopy}
$f$ extends, up to homotopy, to a map defined on some open neighbourhood $U$ of $X$ in $M$. In particular, $\check{H}^2(U; \IZ)\neq 0$ which contradicts the second part of the proof.
\end{proof}

\begin{proof}[Proof of Corollary \ref{cor:H2-R2sub}]
For (a) it suffices to show that $\check{H}^2(U;\IZ)=0$ for all open subsets $U$
of $X$ (by Theorem \ref{thm:IB1equi} and Remark \ref{rem:para}).

By Proposition \ref{prop:Bestvina} this is true if $X$ is a subset of a non-compact connected $2$-manifold. Suppose that $X$ is second-countable with $\dim X<2$. Then for each open subset $U$ of $X$ we have $\dim U\leq \dim X$ (by the `subset theorem' \cite[Theorem~3.1.19]{EngDimThBk}), so $\check{H}^2(U;\IZ)=0$ (see e.g.  \cite[p.~94--95]{EngDimThBk}).

For (b)  we first choose an open subset $U\subset X$
for which $\CE|_U \cong U \times \IM_n$ and such that $U$
can be considered as an open set in $\IR^d$ $(d \geq 3)$. We use the simple fact that $U$
contains an open subset that has the homotopy type of the $2$-sphere $\mathbb{S}^2$.
So, replacing $U$ by such a subset, we can find a non-trivial line
subbundle $\CL$ of $U \times \IC^2$. By Remark \ref{rem:subbundlematrix} (b) we may assume that
$\CL$ is a subbundle of $U \times \IM_n \cong \CE|_U$. The
assertion now follows from the proof of Theorem \ref{thm:IB1equi}.
\end{proof}

\begin{remark}
	\label{rem:literature}
	In the literature there are somewhat similar phonomena that arise for unital $C^*$-algebras $A$
	of sections of a $C^*$-bundle over a (second-countable) compact Hausdorff space $X$.
	The question was to describe when the set $\mathrm{Aut}_{C(X)} (A)$ of all $C(X)$-linear
	automorphisms of such $A$
coincides
with the inner automorphisms of $A$ (see e.g.
\cite{Lance1969,Smith1970,PhilRae1980,PhilRaeTay1982}). For example,
if $A$ is any separable unital continuous trace $C^*$-algebra with
(primitive) spectrum $X$, there always exists an exact sequence
\[
0  \longrightarrow  \mathrm{InnAut}(A) \longrightarrow \mathrm{Aut}_{C(X)} (A) \stackrel{\eta}\longrightarrow \check{H}^2(X;\IZ)
\]
of abelian groups. In general, $\eta$ does not need to be surjective
unless $A$ is stable \cite[Theorem 2.1]{PhilRae1980}. If $A$ is
$n$-homogeneous then the image of $\eta$ is contained in the torsion
subgroup of $\check{H}^2(X;\IZ)$ \cite[2.19]{PhilRae1980}. In
particular, $A=C(\mathbb{S}^2,\IM_2 )$ shows that it can happen
that
$\TM^{\mathrm{nv}}(A)\subsetneq \IB^{\mathrm{nv}}_1(A)$  even though
$\mathrm{Aut}_{C(\mathbb{S}^2)}(A)=\mathrm{InnAut}(A)$ (since
$\check{H}^2(\mathbb{S}^2;\IZ) \cong \IZ$ is torsion free). Our
Proposition
\ref{prop:complexH2not0} shows that the map $\theta$ from
(\ref{eq:theta}) is surjective in this case.
In contrast to $\eta$, there is no obvious group structure on the domain
$\IB^{\mathrm{nv}}_1(A)$ of $\theta$.
\end{remark}

\section{Closure of $\TM(A)$ on homogeneous $C^*$-algebras}
\label{sec:Closure}

Here we continue to work with $n$-homogeneous algebras $A = \Gamma_0(\CE)$.
The class $\IB_1(A)$ considered in \S \ref{sec:Fibrewise} is rather
obviously designed to capture a restriction on the closure of $\TM(A)$
(and similarly $\IB_{0,1}(A)$ should relate to the closure of $\TM_0(A)$).
We verify right away  (Proposition~\ref{prop:IB1-normclosed}) that
$\IB_1(A)$ and $\IB_{0,1}(A)$ are indeed closed.
 However, further restrictions
on the operators $\phi$ in the closure of $\TM(A)$ arise
because triviality of the line
bundles $\CL _\psi$ associated with $\psi \in \TM(A)$ is still
present for the line bundle $\CL _\phi$
provided $U = \mathrm{coz}(\phi)$
is compact (see Corollary~\ref{cor:NormClosureTMCKMn}). If $U$ is
not compact, this triviality is evident on compact subsets of $U$ (see
Theorem~\ref{thm:CloureTM0A}, where we characterize the closure of
$\TM_0(A)$). However  $\CL _\phi$ need not be trivial globally
on $U$ (so that $\phi \notin \TM(A)$ is possible)
and this led us to
define the concept of a phantom bundle
(Definition~\ref{def:phantombundle}). The terminology
is by analogy with the well known
concept of a phantom map (see
\cite{McGibbon1995}).
Thus, in Corollary~\ref {cor:phbun}, we see that finding $\phi$ in the
norm closure of $\TM_0(A)$ with $\phi \notin \TM_0(A)$ is directly
related to finding suitable phantom complex line bundles.

For these to exist, we need $U$ to have a rather complicated
algebraic topological structure, and we find examples with $\pi_1(U)
\cong \IQ$
(Proposition~\ref{prop:new-closed}). In fact,
we can also find such
examples when $X$ contains (a copy of)
an open subset of $\IR^d$ with $d \geq 3$
and $n \geq 2$
(Theorem~\ref{thm:TM-closed}).

\begin{proposition}
	\label{prop:IB1-normclosed}
		Let $A$ be a homogeneous $C^*$-algebra.  Then $\IB_1(A)$ and $\IB_{0,1}(A)$ are norm closed subsets of $\mathcal{B}(A)$.
\end{proposition}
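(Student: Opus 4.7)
The plan is to reduce everything to a fibrewise statement by means of formula~(\ref{eq:normformula}), and then invoke Theorem~\ref{thm:setMab-closed} applied to the simple (hence prime) $C^*$-algebra $\IM_n$.

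First I would observe that $\IB(A)$ is itself norm closed in $\mathcal{B}(A)$: if $\phi_n \to \phi$ with each $\phi_n$ preserving an ideal $I$ of $A$, then $\phi(I)\subseteq I$ by norm closedness of $I$. Next, by (\ref{eq:normformula}) the map $\phi \mapsto (\phi_t)_{t\in X}$ is an isometric embedding of $\IB(A)$ into $\ell^\infty(X,\mathcal{B}(\IM_n))$ (after identifying each fibre $A_t\cong\IM_n$), so norm convergence $\phi_n\to\phi$ forces uniform convergence $(\phi_n)_t\to\phi_t$ over $t\in X$.

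For $\IB_1(A)$: assuming $\phi_n\in\IB_1(A)$ and $\phi_n\to\phi$, each $(\phi_n)_t$ lies in $\TM(A_t)$. Since $A_t\cong\IM_n$ is simple and hence prime, Theorem~\ref{thm:setMab-closed} tells us that $\TM(A_t)$ is norm closed in $\mathcal{B}(A_t)$. Therefore $\phi_t=\lim_n (\phi_n)_t\in\TM(A_t)$ for every $t\in X$, so $\phi\in\IB_1(A)$.

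For $\IB_0(A)$: by (\ref{eq:normformula}), $\|\phi_n-\phi\|=\sup_{t\in X}\|(\phi_n)_t-\phi_t\|$, so $t\mapsto\|(\phi_n)_t\|$ converges uniformly to $t\mapsto\|\phi_t\|$. Since $C_0(X)$ is closed in $C_b(X)$ under uniform convergence, $\phi\in\IB_0(A)$. Finally $\IB_{0,1}(A)=\IB_0(A)\cap\IB_1(A)$ is closed as an intersection of two closed sets. There is no real obstacle here once one notes that Theorem~\ref{thm:setMab-closed} is the fibrewise input, together with the fact that norm convergence of ideal-preserving operators on $A$ is the same thing as uniform fibrewise convergence.
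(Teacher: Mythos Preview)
Your proposal is correct and follows essentially the same route as the paper's own proof: first observe that $\IB(A)$ is closed, then use formula~(\ref{eq:normformula}) to reduce to fibrewise convergence, invoke Theorem~\ref{thm:setMab-closed} on each $A_t\cong\IM_n$ to conclude $\phi_t\in\TM(A_t)$, and handle $\IB_{0,1}(A)$ via uniform convergence of $t\mapsto\|(\phi_n)_t\|$. The only cosmetic difference is that you phrase $\IB_{0,1}(A)$ as an intersection of two closed sets, whereas the paper argues directly; both are fine.
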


\begin{proof}
	If $(\phi_k)_{k=1}^\infty$ is a sequence in $\IB_1(A)$
	that converges in operator norm to $\phi \in \mathcal{B}(A)$,
	then it is clear that $\phi(I) \subset I$ for each ideal $I$ of $A$.
	Thus $\phi \in \IB(A)$.

	By (\ref{eq:normformula}) we have $\| \phi - \phi_k\| = \sup_{t \in X} \|\phi_t -
(\phi_k)_t\|$ and so $\lim_{k \to \infty} (\phi_k)_t =
	\phi_t \in \mathcal{B}(A_t)$ (for $t \in X$). Since $A_t \cong \IM_n$, invoking
	Theorem~\ref{thm:setMab-closed}, we have
	$\phi_t \in \TM(A_t)$ (for $t \in X$)
	and hence $\phi \in \IB_1(A)$.

	If $\phi_k \in \IB_{0,1}(A)$ for each $k$, then $\|(\phi_k)_t\| \to
	\|\phi_t\|$ uniformly for $t \in X$. As $(t \mapsto
	\|(\phi_k)_t\|) \in
	C_0(X)$, it follows that $(t \mapsto \|\phi_t\|) \in C_0(X)$ and so
	$\phi \in \IB_0(A)$.
\end{proof}

\begin{lemma}
	\label{lem:normalise}
	Let $A$ be a homogeneous $C^*$-algebra,
	and let $\phi \in \IB_1^{\mathrm{nv}} (A)$.
	Then there is $\psi \in \IB_1^{\mathrm{nv}} (A)$ with $\psi_t
	= \phi_t/\|\phi_t\|$ for each $t \in X$.

	Moreover $\phi \in \TM(A) \iff \psi \in \TM(A)$.
\end{lemma}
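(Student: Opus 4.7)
The plan is to construct $\psi$ fibrewise by the obvious rescaling and then transfer membership in $\TM(A)$ between $\phi$ and $\psi$ by absorbing the scalar factor into the coefficients.

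\textbf{Construction of $\psi$.} By Corollary~\ref{cor:phi-t-continuous} combined with the nowhere-vanishing hypothesis, the function $f : X \to (0,\infty)$ defined by $f(t) = 1/\|\phi_t\|$ is continuous. For $a \in A$ I would set $\psi(a)(t) = f(t)\,\phi(a)(t)$. Continuity of this section on $X$ follows because $\phi(a) \in \Gamma_0(\CE )$ and $f \in C(X)$ act via the $C(X)$-module structure on $\Gamma(\CE )$. The pointwise estimate $\|\psi(a)(t)\| \leq \|a(t)\|$ yields both vanishing at infinity (so $\psi(a) \in A$) and $\|\psi\| \leq 1$. Ideal preservation is inherited from $\phi$: if $a$ vanishes on $S$, so does $\psi(a)$. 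Hence $\psi \in \IB(A)$ with $\psi_t = f(t)\phi_t$, a nonzero scalar multiple of $\phi_t \in \TM(A_t)$, so $\psi \in \IB_1^{\mathrm{nv}}(A)$.

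\textbf{Equivalence in $\TM(A)$.} Suppose first $\phi = M_{a,b}$ with $a,b \in M(A)$. Using that $\|M_{u,v}\| = \|u\|\,\|v\|$ on each fibre $A_t \cong \IM_n$ and invoking Remark~\ref{rem:normalise} (applicable because $\phi$ is nowhere-vanishing, forcing $\|a(t)\|,\|b(t)\| > 0$), I may assume $\|a(t)\| = \|b(t)\| = \|\phi_t\|^{1/2}$ for every $t$. The function $h(t) = \|\phi_t\|^{-1/2}$ is continuous but possibly unbounded on $X$; nevertheless $\|(ha)(t)\| = \|(hb)(t)\| = 1$, so $ha,hb$ are continuous bounded sections, i.e. $ha,hb \in \Gamma_b(\CE ) = M(A)$. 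A fibrewise check then gives $\psi = M_{ha,hb} \in \TM(A)$. Conversely, if $\psi = M_{c,d}$ with $c,d \in M(A)$, then $g(t) = \|\phi_t\|^{1/2}$ lies in $C_b(X) = Z(M(A))$ (since $g(t)^2 \leq \|\phi\|$), so $gc,gd \in M(A)$, and on each fibre
\[
(gc)(t)\,x(t)\,(gd)(t) = g(t)^2 c(t)\,x(t)\,d(t) = \|\phi_t\|\,\psi_t(x(t)) = \phi_t(x(t)),
\]
giving $\phi = M_{gc,gd} \in \TM(A)$.

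\textbf{Main obstacle.} The only delicate point is that the inverse rescaling factor $h(t) = \|\phi_t\|^{-1/2}$ need not be bounded on $X$, so it does not by itself represent an element of $Z(M(A))$ that could be multiplied into $\phi$. The resolution is the fibrewise normalization of Remark~\ref{rem:normalise}, which absorbs the singularity of $h$ into the coefficients $a$ and $b$ so that $ha$ and $hb$ are of uniform norm $1$ and therefore lie in $M(A)$. In the opposite direction, the factor $g(t) = \|\phi_t\|^{1/2}$ is automatically bounded by $\|\phi\|^{1/2}$, so no analogous issue arises.
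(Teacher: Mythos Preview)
Your proof is correct and follows essentially the same approach as the paper's: construct $\psi$ by the pointwise rescaling $\psi_t=\phi_t/\|\phi_t\|$, and transfer $\TM(A)$-membership by absorbing $\|\phi_t\|^{\pm 1/2}$ into the coefficients after normalising via Remark~\ref{rem:normalise}. The paper's version is terser (it invokes local applications of Proposition~\ref{prop:ICB-is-CXbMn} to place $\psi$ in $\IB(A)$ and simply says the equivalence argument ``can be reversed''), whereas you give the direct module-multiplication argument for continuity and spell out the converse using $g=\|\phi_t\|^{1/2}\in C_b(X)$; both routes are equivalent in substance.
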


\begin{proof}
	Since $t \mapsto \| \phi_t\|$ is continuous by
	Corollary~\ref{cor:phi-t-continuous}, we can define $\psi_t
	        = \phi_t/\|\phi_t\|$ and get $\psi \in \IB(A)$ via
		local applications of
		Proposition~\ref{prop:ICB-is-CXbMn}.
		Clearly $\psi \in \IB_1^{\mathrm{nv}} (A)$.

		If $\phi = M_{a,b} \in \TM(A)$
		for $a, b \in M(A)=\Gamma_b(\CE)$, then we can normalize $a$ and $b$ as in
		Remark~\ref{rem:normalise} and then take
		$c, d \in A$ with $c(t) = a(t)/ \sqrt{ \|\phi_t\| }$,
		$d(t) = b(t)/ \sqrt{ \|\phi_t\| }$ to get
		$\psi = M_{c,d}$. So $\psi \in \TM(A)$.
		We can reverse this argument.
	\end{proof}

\begin{remark}
	\label{rem:NormClosureTM}
Let $\overline{\overline{\TM(A)}}$ denote the operator norm closure of $\TM(A)$, and
similarly for $\overline{\overline{\TM_0(A)}}$. If $A$ is homogeneous, then Proposition~\ref{prop:IB1-normclosed}
gives
$\overline{\overline{\TM(A)}} \subset \IB_1(A)$
and
$\overline{\overline{\TM_0(A)}} \subset \IB_{0,1}(A)$.
\end{remark}

\begin{proposition}
	\label{prop:NormClosureTMCKMn}
	Let $A=\Gamma_0(\CE )$ be an $n$-homogeneous $C^*$-algebra.
	Suppose that $\phi \in \overline{\overline{\TM(A)}}$ such
	that $\inf_{t \in X} \|\phi_t\| = \delta > 0$. Then $\phi \in
	\TM(A)$.
\end{proposition}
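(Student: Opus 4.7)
The plan is to exploit primeness of each fibre $A_t \cong \IM_n$ (Theorem~\ref{thm:Mathieu}) to promote the approximation $M_{a_k,b_k} \to \phi$ to a Haagerup-norm approximation of elementary tensors pointwise in $t$, apply Lemma~\ref{lem:approxHaagerup} in every fibre to extract unimodular phase corrections, and then show that these phases can be chosen continuously on $X$ using the Hilbert bundle structure of Remark~\ref{rem:homog}(g). The main difficulty is this continuous-selection step.

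Choose $a_k, b_k \in M(A) = \Gamma_b(\CE)$ with $M_{a_k, b_k} \to \phi$ in operator norm. Since the differences lie in $\El_2(A)$, Theorem~\ref{thm:Tim03-07} forces the sequence to be Cauchy in cb-norm as well. Using Remark~\ref{rem:normalise} and discarding finitely many terms I arrange $\|a_k(t)\| = \|b_k(t)\| = \sqrt{\|(M_{a_k,b_k})_t\|}$ at every $t \in X$, and the hypothesis $\inf_t \|\phi_t\| \geq \delta$ combined with (\ref{eq:normformula}) yields $\|a_k(t)\|, \|b_k(t)\| \geq \sqrt{\delta/2}$ uniformly in $t$. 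After passing to a subsequence I may assume $\|M_{a_k,b_k} - M_{a_{k+1},b_{k+1}}\|_{cb} < \varepsilon_k$ with $\sum_k \varepsilon_k < \infty$ and each $\varepsilon_k$ small enough that, after rescaling by the central continuous factor $r_k(t) := \|a_k(t)\|$, the hypotheses of Lemma~\ref{lem:approxHaagerup} hold in $A_t \otimes_h A_t$ at every $t$ (this rescaling step uses Mathieu's theorem in the prime fibre $A_t$ together with the bound $|r_k(t)^2 - r_{k+1}(t)^2| \leq \varepsilon_k$).

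The fibrewise scalars supplied by Lemma~\ref{lem:approxHaagerup} need not depend continuously on $t$. To remedy this, define
\[
\mu_k(t) = \frac{\langle a_k(t), a_{k+1}(t) \rangle_2}{|\langle a_k(t), a_{k+1}(t)\rangle_2|},
\]
where $\langle \cdot, \cdot \rangle_2$ is from (\ref{eqn:innerproductfibre}). If $\mu$ is the scalar supplied by the lemma at $t$, then $a_k(t)$ is close to $(r_k(t)/r_{k+1}(t))\mu\, a_{k+1}(t)$, so that $\langle a_k(t), a_{k+1}(t)\rangle_2 \approx (r_k(t)/r_{k+1}(t))\mu\,\|a_{k+1}(t)\|_2^2$ has modulus bounded below by, say, $\delta/4$ once $\varepsilon_k$ is small. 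Hence $\mu_k$ is continuous on $X$ with $|\mu_k| \equiv 1$, and a routine estimate gives $\|a_k - \mu_k a_{k+1}\|_\infty,\ \|b_k - \overline{\mu_k}\, b_{k+1}\|_\infty \leq C\varepsilon_k$ for some constant $C$ depending only on $\delta$ and $\|\phi\|$.

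Finally, set $\nu_1 \equiv 1$ and $\nu_{k+1} = \nu_k \mu_k$, so each $\nu_k$ is continuous on $X$ with $|\nu_k| \equiv 1$. The sequences $a'_k := \nu_k a_k$ and $b'_k := \overline{\nu_k}\, b_k$ lie in $\Gamma_b(\CE) = M(A)$, are Cauchy in sup norm (since $\|a'_k - a'_{k+1}\|_\infty = \|a_k - \mu_k a_{k+1}\|_\infty \leq C\varepsilon_k$, and similarly for $b'_k$), and so converge to elements $a, b \in M(A)$. Because $M_{a_k,b_k} = M_{a'_k, b'_k}$ for every $k$, and operator-norm convergence of $M_{a'_k, b'_k}$ to $M_{a,b}$ is controlled by sup-norm convergence of the coefficients, I conclude $\phi = M_{a,b} \in \TM(A)$.
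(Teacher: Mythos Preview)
Your argument is correct, but it follows a genuinely different route from the paper's. Both proofs use the same core ingredients --- fibrewise primeness (Theorem~\ref{thm:Mathieu}) to pass from cb-norm to Haagerup norm, Lemma~\ref{lem:approxHaagerup} to extract unimodular phases, and the Hilbert-bundle inner product of Remark~\ref{rem:homog}(g) to make those phases continuous in $t$ --- but they deploy them differently. The paper works through the line bundle $\CL_\phi$ of Proposition~\ref{prop:bundle}: it takes a \emph{single} sufficiently close approximant $M_{a_k,b_k}$, compares $a_k(t)$ with the local section $a(t)$ that defines $\CL_\phi$, and uses $\langle a_k(t), a(t)\rangle_2$ to build a global nowhere-vanishing section of $\CL_\phi$, after which Proposition~\ref{prop:bundle} finishes. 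Your proof bypasses $\CL_\phi$ entirely: you compare \emph{consecutive} approximants, use $\langle a_k(t), a_{k+1}(t)\rangle_2$ to make the phase-corrected sequences $(\nu_k a_k)$ and $(\overline{\nu_k}\,b_k)$ Cauchy in $\Gamma_b(\CE)$, and extract $a,b$ as their limits --- essentially a uniform-over-$X$ version of Corollary~\ref{cor:approxMabMcd}. Your approach is more self-contained (no appeal to Proposition~\ref{prop:bundle}), while the paper's is shorter and integrates the result into the bundle framework that drives the rest of \S\ref{sec:Closure}.
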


\begin{proof}
    Let $(\phi_k)_{k=1}^\infty$ be a sequence
	in $\TM(A)$ with $\lim_{k \to \infty} \phi_k =
	\phi \in \mathcal{B}(A)$.
	For $k$ large enough
	that $\|\phi_k - \phi\| < \delta/2$ we must have
	$\|(\phi_k)_t\| > \delta/2$ for each $t \in X$ (and hence
	$\phi_k \in \IB_1^{\mathrm{nv}} (A)$).
	With no loss of generality we may assume that this holds for all
	$k \geq 1$.

	Since
\[
\sup_{t \in X} |  \|(\phi_k)_t\| - \|\phi_t\| | \leq
	\sup_{t \in X}  \|(\phi_k)_t - \phi_t\| = \| \phi_k -
	\phi\|,
\]
we may use Lemma~\ref{lem:normalise} to normalise
	each $\phi_k$ and $\phi$ and assume that
\[
1 = \|\phi\| =
	\|\phi_t\| = \|(\phi_k)_t\| = \|\phi_k\|
\]
  holds for all $k \geq 1$ and $t \in X$ (and still $\lim_{k \to \infty} \phi_k =
	        \phi$).

		We now write $\phi_k = M_{a_k, b_k}$ for $a_k, b_k \in
		M(A)=\Gamma_b(\CE)$ such that $\|a_k(t)\| = \|b_k(t)\| = 1$ (for all $t
		\in X$ and all $k$). We consider the line bundle $\CL _\phi$
		associated with $\phi$ according to
		Proposition~\ref{prop:bundle} which is locally
		expressible as $\{ (t, \lambda a(t) )\}$, where
		$\phi_t = M_{a(t), b(t)}$ locally.
		We assume, as we can, that $\|a(t)\| = \|b(t)\| = 1$
		(locally).

 Let $0<\varepsilon < (18n)^{-1/2}$.

 By Remark \ref{rem:homog} (d), for $k$ suitably large (but fixed) and $t \in X$ arbitrary, we have $\|(\phi_k)_t - \phi_t\|_{cb} < \varepsilon$.
 Since, by Mathieu's theorem (Theorem \ref{thm:Mathieu}), we locally have
 \[
 \|(\phi_k)_t - \phi_t\|_{cb}=\|M_{a_k(t),b_k(t)} - M_{a(t),b(t)}\|_{cb}=\|a_k(t)\otimes b_k(t)-a(t)\otimes b(t)\|_h,
 \]
by Lemma~\ref{lem:approxHaagerup}, we can locally find a scalar $\mu_k(t)$ of modulus $1$ such that
\[ \| a_k(t) - \mu_k(t) a(t)\| < 6\varepsilon
\]
(note that $(18n)^{-1/2}<1/3$ for all $n \geq 1$).

		Consider the inner product $\langle \cdot , \cdot \rangle_2$
defined in Remark \ref{rem:homog} (g).
We claim that locally $\langle a_k(t), a(t) \rangle_2\neq 0$. Indeed, first note that (locally)
\[
		 |\langle a_k(t), a(t) \rangle_2| =
		 |\langle a_k(t), \mu_k(t) a(t) \rangle_2|
	 \]
	 and by (\ref{eqn:innerprodE})
\[
\| a_k(t)\|_2 \geq 1, \quad \| \mu_k(t) a(t)\|_2 \geq 1, \quad
	\|a_k(t) - \mu_k(t) a(t)\|_2 < 6\sqrt{n}\varepsilon.
\]
Since any two vectors $v$ and $w$ of norm at least $1$ in a
	 Hilbert space satisfy
\[
\|v - w\|_2^2 \geq \|v\|_2^2 +
	 \|w\|_2^2 - 2 |\langle v, w \rangle_2 | \geq 2(1 - |\langle v,
	 w \rangle_2 |),
\]
letting $v=a_k(t)$ and $w=\mu_k(t)a(t)$, we have (locally)
\begin{eqnarray*}
|\langle a_k(t), \mu_k(t)a(t)\rangle_2 | &\geq & 1 -
	\frac{1}{2} \| a_k(t) - \mu_k(t) a(t)\|_2^2 > 1
	- 18n\varepsilon^2 \\
&>& 0.
\end{eqnarray*}
We can therefore define $a'(t)$ locally as the normalised
	(in operator norm)
	orthogonal projection
	\[
		a'(t) = \frac{ \langle a_k(t), a(t) \rangle_2}
		{ |\langle a_k(t), a(t) \rangle_2 |} \cdot a(t).
	\]
	
	 Then $t \mapsto a'(t)$ is locally well-defined and continuous (by Remark \ref{rem:homog} (g)).
	
	As $a'(t)$ is independent of multiplying $a(t)$ by unit
	scalars, it defines a nowhere vanishing global
	section of $\CL _\phi$.
	By Proposition~\ref{prop:bundle}, we must have $\phi \in
	\TM(A)$, as required.
\end{proof}

\begin{corollary}
	\label{cor:NormClosureTMCKMn}
	Let $A$ be a unital homogeneous $C^*$-algebra.
	Then
	\[
		\overline{\overline{\TM(A)}} \cap \mathrm{IB}_1^{\mathrm{nv}}(A) \subset \TM(A).
	\]
\end{corollary}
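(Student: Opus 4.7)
The plan is to reduce the corollary directly to Proposition~\ref{prop:NormClosureTMCKMn} by showing that the unitality hypothesis together with the nowhere-vanishing condition supplies precisely the positive-infimum requirement of that proposition.

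First I would observe that unitality of $A$ forces $X = \mathrm{Prim}(A)$ to be compact. This is immediate from Remark~\ref{rem:homog}(e): $Z(A) \cong C_0(X)$, and the unit of $A$ is central, so $C_0(X)$ is unital, giving $X$ compact.

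Next, fix an arbitrary $\phi \in \overline{\overline{\TM(A)}} \cap \IB_1^{\mathrm{nv}}(A)$. By Corollary~\ref{cor:phi-t-continuous}, the function $t \mapsto \|\phi_t\|$ is continuous on $X$, and by the definition of $\IB_1^{\mathrm{nv}}(A)$ (Notation~\ref{not:IB-nv}) this function takes strictly positive values at every $t \in X$. Compactness of $X$ then ensures that the minimum is attained, so
\[
\delta := \inf_{t \in X} \|\phi_t\| = \min_{t \in X} \|\phi_t\| > 0.
\]

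With this positive lower bound in hand, Proposition~\ref{prop:NormClosureTMCKMn} applies verbatim to $\phi$ and yields $\phi \in \TM(A)$. Since $\phi$ was arbitrary in $\overline{\overline{\TM(A)}} \cap \IB_1^{\mathrm{nv}}(A)$, the claimed inclusion follows. There is no real obstacle here: the content lies entirely in Proposition~\ref{prop:NormClosureTMCKMn}, and the role of the corollary is just to package that result under the convenient hypothesis that $\phi$ has a nowhere-vanishing fibrewise rank-one structure on a compact base.
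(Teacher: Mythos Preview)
Your proof is correct and follows essentially the same approach as the paper: compactness of $X$ (from unitality), continuity of $t \mapsto \|\phi_t\|$, and the nowhere-vanishing condition give a strictly positive infimum $\delta$, after which Proposition~\ref{prop:NormClosureTMCKMn} applies directly.
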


\begin{proof}
	Let $\phi \in \overline{\overline{\TM(A)}} \cap \mathrm{IB}_1^{\mathrm{nv}}(A)$.
	Since $t \mapsto \| \phi_t\|$ is continuous
	(Corollary~\ref{cor:phi-t-continuous}) and never vanishing on
	$X$ (which is compact, as $A$ is unital), it has a minimum value $\delta > 0$.
	By Proposition~\ref{prop:NormClosureTMCKMn}, $\phi \in \TM(A)$.
\end{proof}

\begin{example}\label{ex:dim7cohnot0}
	Let $A=C(X,\IM_n )$ $(n \geq 2)$, where $X$ is any compact
	Hausdorff space with $\dim X\leq 7$ and $\check{H}^2(X;\IZ)\neq 0$.
	Then
	$\overline{\overline{\TM(A)}} \subsetneq \mathrm{IB}_1(A)$.
	Indeed, by Proposition \ref{prop:complexH2not0} there exists
	$\phi \in \IB_1^{\mathrm{nv}} (A) \setminus \TM(A)$. By
	Corollary~\ref{cor:NormClosureTMCKMn}, $\phi \not \in
	\overline{\overline{\TM(A)}}$. (Since $A$ is unital, $\TM_0(A)
	= \TM(A)$ and $\mathrm{IB}_{0,1}(A)=\mathrm{IB}_1(A)$.)
\end{example}

\begin{corollary}
	\label{cor:aut-homog}
	If $A=\Gamma_0(\CE)$ is a homogeneous $C^*$-algebra, then both
	$\mathrm{InnAut_{alg}}(A)$ and
	$\mathrm{InnAut}(A)$ (see (\ref
	{eqn:extraTMnotation})) are norm closed.
\end{corollary}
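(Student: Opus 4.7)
The plan is to reduce both assertions to Proposition~\ref{prop:NormClosureTMCKMn}. Suppose $(\phi_k)_{k \geq 1}$ is a sequence in $\mathrm{InnAut_{alg}}(A)$ (resp.\ $\mathrm{InnAut}(A)$) converging in operator norm to some $\phi \in \mathcal{B}(A)$. For each $t \in X$ the fibre map $(\phi_k)_t$ is a unital algebra automorphism (resp.\ unital $*$-automorphism) of $A_t \cong \IM_n$, so $(\phi_k)_t(1) = 1$ and hence $\|(\phi_k)_t\| \geq 1$. Since (\ref{eq:normformula}) forces $(\phi_k)_t \to \phi_t$ in $\mathcal{B}(\IM_n)$, passing to the limit gives $\phi_t(1) = 1$ and $\|\phi_t\| \geq 1$, so $\inf_{t \in X}\|\phi_t\| \geq 1 > 0$. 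Proposition~\ref{prop:NormClosureTMCKMn} therefore yields $\phi \in \TM(A)$; write $\phi = M_{b,c}$ with $b, c \in M(A) = \Gamma_b(\CE)$.

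For $\mathrm{InnAut_{alg}}(A)$, the fibrewise relation $b(t)c(t) = \phi_t(1) = 1$ holds in $\IM_n$ for each $t$, so by finite dimensionality $b(t)$ is invertible in $\IM_n$ with $c(t) = b(t)^{-1}$ (and thus $c(t)b(t) = 1$). These pointwise identities assemble to $bc = cb = 1$ in $\Gamma_b(\CE)$, and since $\|b(t)^{-1}\| = \|c(t)\|$ is uniformly bounded by $\|c\|$, $b$ is invertible in $M(A)$ with $b^{-1} = c$. Hence $\phi = M_{b, b^{-1}} \in \mathrm{InnAut_{alg}}(A)$.

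For $\mathrm{InnAut}(A)$, each $(\phi_k)_t$ is a $*$-automorphism of $\IM_n$, and the $*$-automorphism group of $\IM_n$, isomorphic to $PU(n)$, is compact, hence norm closed in $\mathcal{B}(\IM_n)$. Thus $\phi_t = M_{b(t), b(t)^{-1}}$ is a $*$-automorphism for every $t$, and a short computation using $\phi_t(x^*) = \phi_t(x)^*$ for all $x \in \IM_n$ forces $b(t)^*b(t) \in Z(\IM_n) = \IC \cdot 1$, that is $b(t)^*b(t) = \|b(t)\|^2 \cdot 1$. Therefore $u(t) := b(t)/\|b(t)\|$ is unitary in $\IM_n$, and $\phi_t = M_{u(t), u(t)^*}$. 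The function $t \mapsto \|b(t)\|$ is continuous (fibrewise $C^*$-norms vary continuously on $\CE$) and bounded both above (by $\|b\|$) and below (by $1/\|c\|$), so $u \in M(A)$ is unitary, and $\phi = M_{u, u^*} \in \mathrm{InnAut}(A)$.

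The main obstacle is converting the pointwise invertibility of $b(t)$ in $\IM_n$ (immediate from finite dimensionality) into honest invertibility of $b$ in $M(A) = \Gamma_b(\CE)$. This requires the uniform control $\|b(t)^{-1}\| = \|c(t)\| \leq \|c\|$ supplied by the representation $\phi = M_{b,c}$ delivered by Proposition~\ref{prop:NormClosureTMCKMn}; the analogous subtlety in the unitary case is ensuring that the normalising factor $\|b(t)\|$ is continuous and bounded away from zero, so that $u = b/\|b(\cdot)\|$ actually lies in $\Gamma_b(\CE)$.
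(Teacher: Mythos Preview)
Your proof is correct and follows essentially the same route as the paper: both establish the uniform lower bound $\|\phi_t\|\geq 1$ (you via $(\phi_k)_t(1)=1$, the paper via $\|a(t)\|\,\|a(t)^{-1}\|\geq 1$), invoke Proposition~\ref{prop:NormClosureTMCKMn} to write $\phi=M_{b,c}$, and then use the fibrewise identity $\phi_t(1)=1$ to conclude $c=b^{-1}$. You supply considerably more detail in the $\mathrm{InnAut}(A)$ case (which the paper dismisses as ``similar''), but the underlying argument is the same.
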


\begin{proof}
	If $M_{a, a^{-1}} \in \mathrm{InnAut_{alg}}(A)$, then for all $t \in X$ we have $\|( M_{a, a^{-1}})_t\| = \|a(t)\| \|a(t)^{-1}\| \geq 1$.
	Hence if $\phi$ is in the norm closure of $\mathrm{InnAut_{alg}}(A)$, we have $\|\phi_t\| \geq 1$ for each $t \in X$.
	By Proposition~\ref{prop:NormClosureTMCKMn}, $\phi = M_{b,c}$ for some $b,c \in M(A)$.
	Since $\phi_t(1) = 1$, $c(t)  = b(t)^{-1}$ for each $t$ and so $c = b^{-1} \in M(A) = \Gamma_b(\CE)$.

	The proof for the $\mathrm{InnAut}(A)$ is similar.
\end{proof}

\begin{remark}
The results that $\mathrm{InnAut}(A)$ is norm closed if the
\Cstar-algebra $A$ is prime or homogeneous
(in Corollaries~\ref{cor:aut-prime} and ~\ref{cor:aut-homog})
can also be deduced from \cite{1993Som,KLCR1967,2011ArchSomMun}.
To explain the deductions,
we first identify $\mathrm{InnAut}(A)$ with $\mathrm{InnAut}(M(A))$.

If $A$ is prime, then
$M(A)$ is also prime
(by \cite[Lemma~1.1.7]{AraMathieuBk}).
In particular, $\mathrm{Orc}(M(A))=1$ (in the sense
of \cite[\S2]{1993Som}), so by \cite[Corollary~4.6]{1993Som} inner
derivations of $M(A)$ are norm closed. Then \cite[Theorem~5.3]{KLCR1967}
implies that $\mathrm{InnAut}(M(A))$ is also norm closed.

If $A$ is homogeneous (or more generally quasi-central
and quasi-standard in the sense of \cite{2011ArchSomMun}), then
$M(A)$ is quasi-standard
\cite[Corollary~4.10]{2011ArchSomMun}.
Thus we have
$\mathrm{Orc}(M(A))=1$, and we may conclude as in the prime case.
\end{remark}

\begin{theorem}
	\label{thm:CloureTM0A}
Let $A=\Gamma_0(\CE )$ be a homogeneous $C^*$-algebra. For an operator $\phi \in \mathcal{B}(A)$,
the following two conditions are equivalent:
\begin{itemize}
\item[(a)] $\phi \in \overline{\overline{\TM_0(A)}}$.
\item[(b)] $\phi \in \IB_{0,1}(A)$ and
	for $U=\mathrm{coz}(\phi)$
	(open by Corollary~\ref{cor:phi-t-continuous})
	$\CL _{\phi}$ is trivial on each compact subset of $U$.
\end{itemize}
\end{theorem}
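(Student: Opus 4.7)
The plan is to prove both implications of the theorem. Direction (a)$\Rightarrow$(b) will follow quickly from the fact that $\IB_{0,1}(A)$ is norm closed plus the already-established Corollary~\ref{cor:NormClosureTMCKMn} applied fibrewise over compact sets. Direction (b)$\Rightarrow$(a) is the substantive one and will be carried out by a gluing argument using local triviality of $\CL_\phi$ together with a Urysohn-type cutoff.

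For (a)$\Rightarrow$(b): Since $\TM_0(A) \subseteq \IB_{0,1}(A)$ and $\IB_{0,1}(A)$ is norm closed by Proposition~\ref{prop:IB1-normclosed}, we immediately get $\phi \in \IB_{0,1}(A)$. Now fix a compact $K \subseteq U = \mathrm{coz}(\phi)$, set $A_K = \Gamma(\CE|_K)$, and pass approximants $\phi_n = M_{a_n,b_n} \to \phi$ (with $a_n, b_n \in A$) through the quotient to get $(\phi_n)_K = M_{a_n|_K, b_n|_K} \in \TM(A_K)$ with $(\phi_n)_K \to \phi_K$ in operator norm (since the quotient map is contractive). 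Because $K \subseteq U$ and $t \mapsto \|\phi_t\|$ is continuous by Corollary~\ref{cor:phi-t-continuous}, we have $\phi_K \in \IB_1^{\mathrm{nv}}(A_K)$; and since $A_K$ is unital homogeneous, Corollary~\ref{cor:NormClosureTMCKMn} forces $\phi_K \in \TM(A_K)$. Proposition~\ref{prop:bundle} then says $\CL_{\phi_K}$ is trivial, and by construction $\CL_{\phi_K} = \CL_\phi|_K$.

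For (b)$\Rightarrow$(a): Given $\varepsilon > 0$, use $\phi \in \IB_0(A)$ to form the compact set $K_0 = \{t \in X : \|\phi_t\| \geq \varepsilon\} \subseteq U$. By local compactness of $X$ choose a compact $K'$ with $K_0 \subseteq \mathrm{int}(K') \subseteq K' \subseteq U$, and by Urysohn's lemma (locally compact version) pick $\chi \in C_c(X)$ with $0 \leq \chi \leq 1$, $\chi \equiv 1$ on $K_0$, and $\mathrm{supp}(\chi) \subseteq \mathrm{int}(K')$. By hypothesis (b) applied to the compact set $K'$, the bundle $\CL_\phi|_{K'}$ is trivial, so Proposition~\ref{prop:bundle} (applied to $\phi_{K'} \in \IB_1^{\mathrm{nv}}(A_{K'})$) yields $\phi_{K'} = M_{c,d}$ for some $c, d \in A_{K'} = \Gamma(\CE|_{K'})$ (using that $K'$ is compact, so $A_{K'}$ is unital). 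Extend $\sqrt{\chi}\,c$ and $\sqrt{\chi}\,d$ by zero outside $K'$ to produce $a, b \in A = \Gamma_0(\CE)$; continuity holds because $\chi$ vanishes near the frontier of $\mathrm{supp}(\chi) \subseteq \mathrm{int}(K')$. Then $(M_{a,b})_t = \chi(t)\phi_t$ for $t \in K'$ and $(M_{a,b})_t = 0$ for $t \notin \mathrm{supp}(\chi)$, so $\|(\phi - M_{a,b})_t\| \leq (1-\chi(t))\|\phi_t\|$, which is $0$ on $K_0$ and at most $\|\phi_t\| < \varepsilon$ off $K_0$. Hence $\|\phi - M_{a,b}\| \leq \varepsilon$.

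I expect the main obstacle to be the globalization step in (b)$\Rightarrow$(a): one must convert a purely local representation $\phi_{K'} = M_{c,d}$ into a genuinely global $M_{a,b} \in \TM_0(A)$ while keeping the error small on all of $X$. The $\IB_0$ hypothesis is exactly what allows the bulk of the mass of $\phi$ to be captured inside a single compact $K_0 \subseteq U$, and inserting the buffer $K'$ between $K_0$ and the complement of $U$ is what permits a smooth cutoff; splitting $\chi$ symmetrically as $\sqrt{\chi}\cdot\sqrt{\chi}$ between the two factors $a$ and $b$ is the mechanism that turns a multiplicative cutoff of $\phi_{K'}$ into a bona fide two-sided multiplication. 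Conversely, the deeper content of (a)$\Rightarrow$(b) is already hidden in Corollary~\ref{cor:NormClosureTMCKMn} (and ultimately in Lemma~\ref{lem:approxHaagerup} via Proposition~\ref{prop:NormClosureTMCKMn}), so the argument here reduces to a clean restriction-to-$K$ maneuver.
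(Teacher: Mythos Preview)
Your proof is correct and follows essentially the same route as the paper: (a)$\Rightarrow$(b) via restriction to compact $K\subset U$ and Corollary~\ref{cor:NormClosureTMCKMn}, and (b)$\Rightarrow$(a) by using triviality of $\CL_\phi$ on a compact set containing $\{t:\|\phi_t\|\geq\varepsilon\}$ to produce a nearby two-sided multiplication. The only cosmetic difference is in the extension step: the paper normalises so that $\|a_n(t)\|=\|b_n(t)\|=\sqrt{\|\phi_t\|}$ and then extends with controlled norm, whereas you achieve the same effect with a Urysohn cutoff split as $\sqrt{\chi}\cdot\sqrt{\chi}$.
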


\begin{proof}
	(a) $\Rightarrow$ (b):
	Let $\phi \in \overline{\overline{\TM_0(A)}}$, so that
	$\phi  \in \IB_{0,1}(A)$ (Remark~\ref{rem:NormClosureTM}).
	For each compact subset $K \subset U$, we have $\phi_K \in
	\overline{\overline{\TM(A_K)}}$ (recall that $A_K=\Gamma(\CE|_K)$ by Remark \ref{rem:homog} (b)). By Corollary~\ref{cor:NormClosureTMCKMn}
    we have $\phi_K \in \TM(A_K)$, so that $\CL _{\phi}$ must be trivial on $K$ (by Proposition~\ref{prop:bundle}).
		
	(b) $\Rightarrow$ (a):
	Let $\phi  \in \IB_{0,1}(A)$, so that $t \mapsto \|\phi_t\|$ is in
	$C_0(X)$.

	For any
	sequence $\delta_n>0$ decreasing strictly to 0 (for instance $\delta_n = 1/n$)
	let
\[
K_n= \{ t \in X  \ : \  \|\phi_t\|\geq \delta_n  \}.
\]
Then each $K_n$ is compact,
	$K_n \subset
	K_{n+1}^\circ$ and $\bigcup_{n=1}^\infty K_n = U$.
	By Proposition~\ref{prop:bundle}, $\psi_{K_n} \in
	\TM(A_{K_n})=\TM(\Gamma(\CE|_{K_n}))$ and so there are $a_n, b_n
	\in A_{K_n}$ with $\psi_{K_n} = M_{a_n, b_n}$.
		Using Remark~\ref{rem:normalise}, we may assume
		$\|a_n(t)\| = \|b_n(t)\| = \sqrt{\|\phi_t\|}$ for $t
		\in K_n$. By Remark \ref{rem:homog} (b) we
		may extend $a_n$ to $c_n \in A$ with $c_n(t) = 0$ for
		$t \in X \setminus K_{n+1}^\circ$ and
		$\|c_n(t)\|^2 \leq
		\delta_n$ for all $t \in X \setminus K_n$. Similarly we extend $b_n$
		to $d_n \in A$ supported in $K_{n+1}^\circ$
		with $\|d_n(t)\|^2 \leq \delta_n$ for
		$t \in X \setminus K_n$. Then $(M_{c_n, d_n} - \phi)_t$ has norm at
		most $2 \delta_n$ for all $t \in X$ and hence $\lim_{n
		\to \infty} M_{c_n, d_n} = \phi$. Thus $\phi \in
		\overline{\overline{\TM_0(A)}}$.
\end{proof}

\begin{corollary}\label{cor:IB01clTM0}
For a homogeneous $C^*$-algebra $A=\Gamma_0(\CE)$ the following conditions are equivalent:
\begin{itemize}
\item[(a)] $\overline{\overline{\TM_0(A)}}=\IB_{0,1}(A)$.
\item[(b)] For each $\sigma$-compact open subset $U$ of $X$, every complex line subbundle of $\CE|_U$ is trivial on all compact subsets of $U$.
\end{itemize}
\end{corollary}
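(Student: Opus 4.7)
The plan is to derive this corollary directly from Theorem \ref{thm:CloureTM0A} together with Proposition \ref{prop:bundletophi}, using the standard trick (already employed in the proof of Theorem \ref{thm:IB1equi}) of extending operators from a $\sigma$-compact open set by zero.

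For the direction (a) $\Rightarrow$ (b), I would start with a $\sigma$-compact open set $U \subseteq X$ and a complex line subbundle $\CL$ of $\CE|_U$. Setting $B=\Gamma_0(\CE|_U)$, Proposition \ref{prop:bundletophi} supplies an operator $\psi \in \IB_{0,1}^{\mathrm{nv}}(B)$ whose associated line bundle is $\CL$. Since $(t \mapsto \|\psi_t\|) \in C_0(U)$, I extend $\psi$ by zero outside $U$ to obtain $\phi \in \IB_{0,1}(A)$ with $\mathrm{coz}(\phi)=U$ and $\CL_\phi = \CL$. Hypothesis (a) yields $\phi \in \overline{\overline{\TM_0(A)}}$, and Theorem \ref{thm:CloureTM0A} then forces $\CL_\phi = \CL$ to be trivial on every compact subset of $U$.

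For the converse (b) $\Rightarrow$ (a), I begin with the inclusion $\overline{\overline{\TM_0(A)}} \subseteq \IB_{0,1}(A)$ already noted in Remark \ref{rem:NormClosureTM}, so only the reverse inclusion needs proof. Given $\phi \in \IB_{0,1}(A)$, let $U = \mathrm{coz}(\phi)$, which is open and $\sigma$-compact by Corollary \ref{cor:phi-t-continuous} (combined with the fact that $\{t : \|\phi_t\| \geq 1/n\}$ is a countable exhaustion by compact sets because $(t \mapsto \|\phi_t\|) \in C_0(X)$). Then $\CL_\phi$ is a complex line subbundle of $\CE|_U$, so by (b) it is trivial on each compact subset of $U$, and Theorem \ref{thm:CloureTM0A} gives $\phi \in \overline{\overline{\TM_0(A)}}$.

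There is no genuine obstacle here, since both implications reduce immediately to Theorem \ref{thm:CloureTM0A}. The only minor point that requires attention is verifying that the zero-extension in the first implication lies in $\IB_{0,1}(A)$ (which follows from continuity of $t \mapsto \|\psi_t\|$ vanishing on $\partial U$, as used analogously in the proof of (b) $\Rightarrow$ (a) of Theorem \ref{thm:IB1equi}) and that $\mathrm{coz}(\phi)$ is $\sigma$-compact whenever $\phi \in \IB_0(A)$, which is the content of Corollary \ref{cor:phi-t-continuous}.
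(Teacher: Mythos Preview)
Your proposal is correct and follows essentially the same argument as the paper's own proof: both directions are reduced to Theorem~\ref{thm:CloureTM0A}, with Proposition~\ref{prop:bundletophi} and the zero-extension trick supplying the operator in (a)~$\Rightarrow$~(b), and the $\sigma$-compactness of $\mathrm{coz}(\phi)$ (via $t\mapsto\|\phi_t\|\in C_0(X)$) used in (b)~$\Rightarrow$~(a). Your write-up is in fact slightly more explicit than the paper's about why the extension lands in $\IB_{0,1}(A)$ and why $\mathrm{coz}(\phi)$ is $\sigma$-compact.
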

\begin{proof}
(a) $\Rightarrow$ (b): Let $U$ be a $\sigma$-compact open subset of $X$, $B=\Gamma_0(\CE|_U)$ and  $\CL$ a complex line subbundle of $\CE|_U$.
 By Proposition \ref{prop:bundletophi} we can find an operator $\phi \in \IB_{0,1}^{\mathrm{nv}}(B)$ such that $\CL_\phi=\CL$. By extending
	$\phi$ to be zero outside $U$, we may assume that $\phi \in \IB_{0,1}(A)$, so that $U=\mathrm{coz}(\phi)$.
By assumption, $\phi \in \overline{\overline{\TM_0(A)}}$, so by Theorem \ref{thm:CloureTM0A} $\CL$ is trivial on all compact subsets of $U$.

(b) $\Rightarrow$ (a): If $\phi \in \IB_{0,1}(A)$ then $U=\mathrm{coz}(\phi)$ is an open, necessarily
	$\sigma$-compact subset of $X$ (since $t \mapsto \|\phi_t\|$ is in
	$C_0(X)$). By assumption, $\CL_\phi$ is trivial on every compact subset of $U$.
Hence, $\phi \in \overline{\overline{\TM_0(A)}}$ by Theorem \ref{thm:CloureTM0A}.
\end{proof}

\begin{definition}
	\label{def:phantombundle}
A locally trivial fibre bundle $\mathcal{F}$ over a locally compact Hausdorff space $X$ is said to be a \textit{phantom bundle} if $\mathcal{F}$ is not globally trivial, but is trivial on each compact subset of $X$.
\end{definition}

\begin{corollary}\label{cor:phbun}
	Let $A=\Gamma_0(\CE )$ be a homogeneous $C^*$-algebra. Then
	$\TM_0(A)$ fails to be norm closed in $\mathcal{B}(A)$
	if and only if there exists a $\sigma$-compact
	open subset $U$ of $X$ and a phantom complex line subbundle of
	$\CE|_U$.

	If these equivalent conditions hold, then $\TM(A)$ fails to be norm closed.
\end{corollary}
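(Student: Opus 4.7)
The plan is to deduce both assertions from Theorem~\ref{thm:CloureTM0A}, which identifies $\overline{\overline{\TM_0(A)}}$ with the subset of $\IB_{0,1}(A)$ whose associated line bundle $\CL_\phi$ is trivial on each compact subset of $U=\mathrm{coz}(\phi)$, combined with Proposition~\ref{prop:bundle} (triviality of $\CL_\phi$ detects membership in $\TM$) and Proposition~\ref{prop:bundletophi} (realizing any line subbundle as $\CL_\phi$ for some $\phi$).

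For the forward implication, suppose $\TM_0(A)$ is not norm closed and pick $\phi\in\overline{\overline{\TM_0(A)}}\setminus\TM_0(A)$. By Theorem~\ref{thm:CloureTM0A} we have $\phi\in\IB_{0,1}(A)$; set $U=\mathrm{coz}(\phi)$, which is open and $\sigma$-compact by Corollary~\ref{cor:phi-t-continuous}, and consider the line subbundle $\CL_\phi$ of $\CE|_U$. Theorem~\ref{thm:CloureTM0A} says $\CL_\phi$ is trivial on each compact subset of $U$, so the only thing to verify is that $\CL_\phi$ is not globally trivial on $U$. If it were, then $\phi|_U\in\IB_{0,1}^{\mathrm{nv}}(B)$ (where $B=\Gamma_0(\CE|_U)$) would satisfy $\phi|_U=M_{c,d}$ for some $c,d\in M(B)=\Gamma_b(\CE|_U)$ by Proposition~\ref{prop:bundle}; after the normalization of Remark~\ref{rem:normalise} we could arrange $\|c(t)\|^2=\|d(t)\|^2=\|\phi_t\|$ for $t\in U$, so $c,d\in B$, and extending by zero gives $a,b\in A$ with $\phi=M_{a,b}\in\TM_0(A)$, contradicting the choice of $\phi$. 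Hence $\CL_\phi$ is a phantom line subbundle of $\CE|_U$, as required.

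For the converse, assume there is a $\sigma$-compact open $U\subseteq X$ and a phantom complex line subbundle $\CL$ of $\CE|_U$. Applying Proposition~\ref{prop:bundletophi} to $B=\Gamma_0(\CE|_U)$ produces $\psi\in\IB_{0,1}^{\mathrm{nv}}(B)$ with $\CL_\psi=\CL$; extending $\psi$ by zero we obtain $\phi\in\IB_{0,1}(A)$ with $\mathrm{coz}(\phi)=U$ and $\CL_\phi=\CL$. Since $\CL$ is trivial on every compact subset of $U$, Theorem~\ref{thm:CloureTM0A} yields $\phi\in\overline{\overline{\TM_0(A)}}$. On the other hand, if $\phi$ were in $\TM_0(A)$, say $\phi=M_{a,b}$ with $a,b\in A$, then $\phi|_U=M_{a|_U,b|_U}\in\TM(B)$, and Proposition~\ref{prop:bundle} would force $\CL_\phi=\CL$ to be globally trivial on $U$, contradicting that it is phantom. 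Thus $\phi\in\overline{\overline{\TM_0(A)}}\setminus\TM_0(A)$, so $\TM_0(A)$ is not norm closed.

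For the last statement, note that the witness $\phi$ just constructed lies in $\IB_0(A)$. Since $\TM_0(A)=\TM(A)\cap\IB_0(A)$ (Notation~\ref{not:IB-nv}), $\phi\notin\TM_0(A)$ implies $\phi\notin\TM(A)$, while $\phi\in\overline{\overline{\TM_0(A)}}\subseteq\overline{\overline{\TM(A)}}$; hence $\TM(A)$ also fails to be norm closed. The only real technical point in the whole argument is the normalization step in the forward direction, which ensures that the candidate multipliers obtained from a hypothetical global trivialization of $\CL_\phi$ actually lie in $A$ rather than merely in $M(A)$; this is a routine application of Remark~\ref{rem:normalise} once one works on the restriction to $U$.
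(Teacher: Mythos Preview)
Your proof is correct and follows essentially the same approach as the paper's own proof: both directions use Theorem~\ref{thm:CloureTM0A}, Proposition~\ref{prop:bundle}, and Proposition~\ref{prop:bundletophi} in the same way, and the final statement about $\TM(A)$ is deduced from the same witness $\phi$. The only cosmetic difference is that the paper shows $\phi\notin\TM(A)$ directly in the converse (via $\phi|_U\notin\TM(B)$), whereas you first show $\phi\notin\TM_0(A)$ and then upgrade using $\TM_0(A)=\TM(A)\cap\IB_0(A)$; these are equivalent routes.
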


\begin{proof}
	If $\TM_0(A)$ fails to be norm closed, there is $\phi \in
	\overline{\overline{\TM_0(A)}} \setminus \TM_0(A)$.
	Note that $\phi \in \IB_{0,1}(A)$ by Proposition~\ref{prop:IB1-normclosed}.
	By Theorem~\ref{thm:CloureTM0A},
	for $U= \mathrm{coz}(\phi)$ (open and $\sigma$-compact),
	$\CL _{\phi}$ is trivial on each compact subset of $U$.
	Moreover $\phi|_U \in \IB_{0,1}^{\mathrm{nv}}(B)$ for $B = \Gamma_0(\CE|_U)$.
	By Proposition~\ref{prop:bundle}, if $\CL _{\phi}$ is globally
	trivial, then $\phi|_U \in \TM(B) \cap \IB_0(B) = \TM_0(B)$. So $\phi|_U =
	M_{a,b}$ for $a, b \in B$.
	Since $B$ can be considered as an ideal of $A$ (Remark
	\ref{rem:homog} (c)), we treat $a, b \in A$.
	Hence $\phi
	= M_{a, b} \in \TM_0(A)$, a contradiction. Thus $\CL _{\phi}$ is a
	phantom bundle.

	Conversely, suppose that $U \subset X$ is open and $\sigma$-compact
	and that $\CL $ is a phantom complex line
	subbundle of $\CE|_U$. Then, taking $B =\Gamma_0(\CE|_U)$,
	Proposition~\ref{prop:bundletophi} provides $\psi \in \IB^{\mathrm{nv}}_{0,1}(B)$ with
	$\CL _\psi = \CL $. As $\CL $ is a phantom bundle, by
	Proposition~\ref{prop:bundle}, $\psi \notin \TM(B)$. We may define $\phi \in
	\IB_{0,1}(A)$ by $\phi_t = \psi_t$ for $t \in U$
	and $\phi_t = 0$ for $t \in X \setminus U$. From $\psi = \phi|_U \notin
	\TM(B)$, we have $\phi \notin \TM(A)$ but $\phi \in
	\overline{\overline{\TM_0(A)}}$ by Theorem~\ref{thm:CloureTM0A}.
\end{proof}

We now describe below a class of homogeneous $C^*$-algebras $A$ for which $\TM_0(A)$ and $\TM(A)$ both
fail to be norm closed. We first explain some preliminaries.

\begin{remark}\label{rem:EML}
Let $G$ be a group and $n$ a positive integer. Recall that a space $X$ is called an \textit{Eilenberg-MacLane} space of type
 $K(G, n)$, if it's $n$-th homotopy group $\pi_n(X)$ is isomorphic to
 $G$ and all other homotopy groups trivial. If $n > 1$ then $G$ must
 be abelian (since for all $n>1$, the homotopy groups $\pi_n(X)$  are
 abelian). We state some basic facts and examples about Eilenberg-MacLane
 spaces:
\begin{itemize}
\item[(a)] There exists a CW-complex $K(G, n)$ for any group $G$ at $n
	= 1$, and abelian group $G$ at $n > 1$. Moreover such a CW-complex is unique
up to homotopy type. Hence, by abuse of notation, it is common to denote any such space  by $K(G, n)$  \cite[p.~365-366]{HBk}.
\item[(b)] Given a CW-complex $X$, there is a bijection between its cohomology group $H^n(X; G)$ and the homotopy classes $[X, K(G, n)]$ of maps from $X$ to $K(G, n)$ \cite[Theorem 4.57]{HBk}.
\item[(c)] $K(\IZ, 2)\cong\IC P^\infty$ \cite[Example 4.50]{HBk}. In particular, by (b) and Remark \ref{rem:para}, for each CW-complex $X$ there is a bijection between $[X, K(\IZ, 2)]$ and isomorphism classes of complex line bundles over $X$.
\end{itemize}
 \end{remark}

\begin{proposition}\label{prop:K(Q,1)}
  If $X$ is a locally compact
  CW-complex of type $K(\IQ, 1)$, then every non-trivial complex line bundle over $X$ is a phantom bundle.
  Moreover there are uncountably many non-isomorphic such bundles.
\end{proposition}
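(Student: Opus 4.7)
The plan is to reduce to the explicit realisation of $K(\IQ,1)$ as the mapping telescope $T$ of
\[
S^1 \xrightarrow{z\mapsto z^2} S^1 \xrightarrow{z\mapsto z^3} S^1 \xrightarrow{z\mapsto z^4}\cdots,
\]
which is aspherical with $\pi_1(T) = \varinjlim(\IZ\xrightarrow{\cdot n}\IZ) \cong \IQ$. By Remark~\ref{rem:para} and Remark~\ref{rem:EML}(c), complex line bundles on CW-complexes are classified by $H^2(-;\IZ)$, and both ``globally trivial'' and ``trivial on every compact subset'' are invariants of the homotopy type of a bundle (a homotopy inverse sends compact sets to compact sets and pullback preserves triviality), so I may work with $T$ in place of $X$ throughout.

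Write $T_n$ for the finite sub-telescope of the first $n$ stages. Each $T_n$ deformation-retracts onto its terminal $S^1$, so $H^2(T_n;\IZ) = 0$ and every complex line bundle on $T$ restricts trivially to $T_n$. Any compact subset of a CW-complex lies in some finite subcomplex and hence in some $T_n$; therefore every complex line bundle on $T$ is trivial on every compact subset, and by Definition~\ref{def:phantombundle} every non-trivial complex line bundle over $X$ is phantom.

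To count, I compute $H^2(X;\IZ)$. Since $T = \bigcup_n T_n$ and singular homology commutes with such filtered colimits, $H_*(T;\IZ) = \varinjlim H_*(T_n;\IZ) = \varinjlim H_*(S^1;\IZ)$, giving $H_1(T;\IZ) \cong \IQ$ and $H_2(T;\IZ) = 0$. The universal coefficient theorem then yields $H^2(X;\IZ)\cong\mathrm{Ext}^1_\IZ(\IQ,\IZ)$. Applying $\mathrm{Hom}(\IQ,-)$ to $0\to\IZ\to\IQ\to\IQ/\IZ\to 0$ and using $\mathrm{Hom}(\IQ,\IZ)=0$ and $\mathrm{Ext}^1(\IQ,\IQ)=0$ (since $\IQ$ is an injective $\IZ$-module) gives
\[
\mathrm{Ext}^1_\IZ(\IQ,\IZ)\;\cong\;\mathrm{Hom}(\IQ,\IQ/\IZ)\big/\IQ,
\]
which is uncountable because the subgroup $\{f : f(1)=0\}$ of $\mathrm{Hom}(\IQ,\IQ/\IZ)$ is canonically isomorphic to the profinite completion $\widehat{\IZ}$ (via the coherent $n!$-torsion values $f(1/n!) \in (1/n!)\IZ/\IZ$) and meets the countable image of $\IQ$ in the countable subgroup $\IZ$; this produces uncountably many non-isomorphic complex line bundles over $X$, all but one of which are phantom.

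The main point requiring care is verifying the homotopy-invariance of the phantom property, so that the telescope $T$ legitimately stands in for an arbitrary CW-model $X$; the remainder is standard algebraic topology and homological algebra.
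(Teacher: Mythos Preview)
Your proof is correct and follows essentially the same approach as the paper: both reduce to the explicit mapping telescope model, use that compacta sit inside finite sub-telescopes (each homotopy equivalent to $S^1$) to get triviality on compacts, compute $H^2$ via the universal coefficient theorem as $\Ext^1_\IZ(\IQ,\IZ)$, and transfer to a general $X$ via the homotopy equivalence with the telescope. The only notable difference is that you supply a direct argument for the uncountability of $\Ext^1_\IZ(\IQ,\IZ)$ via $\widehat{\IZ}$, whereas the paper simply cites Wiegold's result that $\Ext(\IQ,\IZ)\cong\IR$.
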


\begin{proof}
 The standard model of $K(\IQ, 1)$ is the mapping telescope $\Delta$ of the sequence
\begin{equation}\label{eq:K(Q,1)}
\mathbb{S}^1 \stackrel{f_1}\longrightarrow \mathbb{S}^1 \stackrel{f_2}\longrightarrow
  \mathbb{S}^1 \stackrel{f_3}\longrightarrow \cdots,
\end{equation}
   where
   $f_n: \mathbb{S}^1 \to \mathbb{S}^1$ is given by $z \mapsto z^{n+1}$
  (see e.g. \cite[Example 1.9]{LScatBk} and \cite[Section 3.F]{HBk}).

We first consider the case when $X=\Delta$. Applying $H_1(-; \IZ)$ to the levels of the mapping telescope (\ref{eq:K(Q,1)})
gives the system
$$
\IZ \stackrel{(f_1)_*}\longrightarrow \IZ
\stackrel{(f_2)_*}\longrightarrow \IZ \stackrel{(f_3)_*}\longrightarrow
\cdots,
$$
where $(f_n)_* : \IZ \to \IZ$ is given by $k \mapsto (n+1)k$
(see \cite[Section 3.F]{HBk}). The colimit of this system is (by  \cite[Proposition
	        3.33]{HBk})
		$H_1(\Delta; \IZ)= \IQ$
		and all other integral homology groups are trivial. By the
	universal coefficient theorem for cohomology \cite[Theorem 3.2]{HBk} (see also
	\cite[\S3.F]{HBk})
each integral cohomology group of $\Delta$ is trivial, except for $\check{H}^2(\Delta;
\IZ)$ which is isomorphic to $\Ext(\IQ;\IZ)$. By \cite{Wiegold}
$\Ext(\IQ;\IZ)$ is isomorphic to the additive group of real
numbers. Hence,
by Remark \ref{rem:para},
there exists uncountably many non-isomorphic
complex line bundles over
$\Delta$. We claim that each non-trivial such bundle
$\CL $ is a phantom bundle. Indeed, for $n\geq 1$ let $\Delta_n$ denote the $n$-the level of the mapping telescope (\ref{eq:K(Q,1)}). If $K$
be an arbitrary compact subset of $\Delta$ then $K$ is contained in some $\Delta_n$. Since all $\Delta_n$'s
are homotopy equivalent to $\mathbb{S}^1$, and since $\check{H}^2(\mathbb{S}^1;\IZ)=0$, we conclude that $\CL |_{\Delta_n}$
is trivial. Then $\CL |_K$ is also trivial, since $K \subset \Delta_n$.

If $X$ is another locally compact CW-complex of
of type $K(\IQ, 1)$, then by Remark \ref{rem:EML} (a), there are maps
$f \colon \Delta \to X$ and $g \colon X \to \Delta$ such that $g \circ f$ and $f \circ g$ are
homotopic (respectively) to the identity maps (on $\Delta$ and $X$, respectively).
If $\CL$ is a non-trivial complex line bundle over $\Delta$, then $g^*(\CL)$ is non-trivial over $X$
(Remark~\ref{rem:para}).
Moreover $g^*(\CL)$ is a phantom bundle because $K \subset X$ compact implies
$g(K) \subset \Delta$ compact and $g^*(\CL)|_K$ is a restriction of
$g^*(\CL)|_{g^{-1}(g(K))} = g^*(\CL|_{g(K)})$, which is a trivial bundle. Since $g$ is a homotopy equivalence,
every non-trivial complex line bundle over $X$ must be isomorphic to $g^*(\CL)$ for some $\CL$.
\end{proof}

\begin{remark}\label{rem:CompSubsDelta}
With the same notation as in the proof of Proposition \ref{prop:K(Q,1)},
one can show that for each compact subset $K$ of $\Delta$ we have
$\check{H}^2(K;\IZ)=0$. To sketch the proof, choose an arbitrary complex line
bundle $\CL$ over $K$. Then using Lemma \ref{lem:exthomotopy} (and
Remark \ref{rem:para}) $\CL$ can be extended to an open neighbourhood
$U$ of $K$. The assertion can now be established via an argument
with triangulations of $\Delta$. There is a triangulation of $\Delta$
where $\Delta_1$ has $3$ triangles and each $\Delta_{n+1}$ has $n+3$
more triangles than $\Delta_n$. We may subdivide the triangles that touch $K$
to
get finitely many that cover $K$ and are all
contained
in $U$.  Now consider the union $T$ of the triangles that touch $K$. It is enough to show $\CL|_T$
is trivial. We can
deformation retract $T$ to a union of 1-simplices.
To do so, work on one triangle (2-cell)
at a time, starting with
any 2-cell in $\Delta_1$
with a `free' edge not in the boundary of $\Delta_1$ relative to
$\Delta_2$
(where `free' means the edge does not bound a second 2-cell).
After each step, consider the remaining 2-cells, edges and vertices.
Move on to $\Delta_2$ once all 2-cells in $\Delta_1$ are exhausted,
etc, so as to arrive at a $1$-simplex after finitely
many steps. As all complex line bundles over 1-simplices are trivial,
we have that $\CL|_T$ is trivial.
\end{remark}

In private correspondence, Mladen Bestvina informed us that we can find
phantom bundles even over some open subset of $\IR^3$, and referred
us to  \cite{1212.0128}. We outline
the construction of such a subset.

\begin{proposition}
	\label{prop:solenoid}
There exists an open subset $\Omega$ of $\IR^3$ of type $K(\IQ,1)$.
\end{proposition}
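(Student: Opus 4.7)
The plan is to realize $\Omega$ as the complement in $\mathbb{S}^3$ of a solenoid $\Sigma$, presented as an ascending union of open solid tori whose $\pi_1$-inclusion maps form a direct system with colimit $\IQ$. First, I would choose a nested sequence of closed solid tori $T_1 \supset T_2 \supset T_3 \supset \cdots$ in $\mathbb{S}^3$ such that, for each $n$, the solid torus $T_{n+1}$ lies in the interior of $T_n$ as a tubular neighbourhood of an $(n+1,1)$-torus curve; in particular the core of $T_{n+1}$ winds $n+1$ times longitudinally around the core of $T_n$. Such embeddings can be produced by an iterated tubular construction and are standard. Set $\Sigma=\bigcap_{n\geq 1} T_n$, a solenoid of Vietoris--van Dantzig type, pick any point $p\in\Sigma$, and use $\mathbb{S}^3\setminus\{p\}\cong\IR^3$ to define
\[
\Omega \;=\; \mathbb{S}^3\setminus\Sigma \;\subset\; \mathbb{S}^3\setminus\{p\} \;\cong\; \IR^3,
\]
which is open in $\IR^3$.

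Next I would write $\Omega=\bigcup_{n\geq 1} U_n$, where $U_n=\mathbb{S}^3\setminus T_n$. Each $U_n$ is an open solid torus (the complement of a closed solid torus in $\mathbb{S}^3$ is again a solid torus), so $U_n$ is homotopy equivalent to $\mathbb{S}^1$; in particular $\pi_1(U_n)\cong\IZ$ and $\pi_k(U_n)=0$ for every $k\geq 2$. A canonical generator of $\pi_1(U_n)$ is represented by a meridian $m_n$ of $T_n$. Since homotopy groups commute with ascending unions of open subspaces, there are natural identifications
\[
\pi_k(\Omega) \;\cong\; \varinjlim_n \pi_k(U_n) \qquad (k\geq 1),
\]
so all higher homotopy groups of $\Omega$ vanish automatically.

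The heart of the argument is to compute the inclusion-induced maps $\pi_1(U_n)\to\pi_1(U_{n+1})$. I would argue via linking numbers: the meridian disk in $T_n$ bounded by $m_n$ meets the core of $T_{n+1}$ transversely in exactly $n+1$ points (one per longitudinal traversal), so the linking number of $m_n$ with the core of $T_{n+1}$ is $n+1$. Equivalently, the class of $m_n$ in $\pi_1(U_{n+1})$ equals $(n+1)\cdot m_{n+1}$, that is, the inclusion induces multiplication by $n+1$ on $\pi_1\cong\IZ$. The colimit of the resulting directed system
\[
\IZ \xrightarrow{\times 2} \IZ \xrightarrow{\times 3} \IZ \xrightarrow{\times 4} \cdots
\]
is the additive group $\IQ$ (the partial products $(n+1)!$ eventually become divisible by any given integer), so $\pi_1(\Omega)\cong\IQ$. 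Together with the vanishing of the higher homotopy groups this yields $\Omega\simeq K(\IQ,1)$.

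The main technical obstacle is the geometric bookkeeping in the linking-number computation, namely verifying that the $(n+1,1)$-torus curve embedding $T_{n+1}\hookrightarrow T_n$ really forces $m_n$ to represent $(n+1)\cdot m_{n+1}$ in $\pi_1(U_{n+1})$, and simultaneously arranging all the successive embeddings coherently so that $\Sigma=\bigcap T_n$ is a genuine (nonempty, compact) solenoid. Everything else, in particular the colimit formula for $\pi_k$ and the identification of the algebraic direct limit with $\IQ$, is entirely standard.
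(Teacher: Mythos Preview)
Your construction is essentially identical to the paper's: both realise $\Omega$ as a solenoid complement in $\mathbb{S}^3$, written as an ascending union of solid tori whose $\pi_1$-inclusion maps give the directed system $\IZ \xrightarrow{\times 2} \IZ \xrightarrow{\times 3} \cdots$ with colimit $\IQ$, and then transplant into $\IR^3$ by deleting a point of the solenoid. The only imprecision is your parenthetical that ``the complement of a closed solid torus in $\mathbb{S}^3$ is again a solid torus'' --- this fails for knotted tori; it holds here because each $(n{+}1,1)$-torus curve is an unknot, a point the paper makes explicit via an ambient-isotopy argument.
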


	\begin{proof}
	In \cite{1212.0128}, a construction is given
	of dense open sets $U$ in the $3$-sphere $\mathbb{S}^3$
	with fundamental groups $\pi_1(U)$
	that are large subgroups of $\IQ$. Given a sequence
	$n_i$ of natural numbers $n_i > 1$, $\pi_1(U)$ can be $\{
	p/q \in \IQ : p \in \IZ, q = \prod_{i=1}^k n_i \mbox{ for some } k\}$.
	In particular we
	will take $n_i = i+1$ and then $\pi_1(U) = \IQ$.

	The construction defines $U$ as a union of closed solid tori $U =
	\bigcup_{i=1}^\infty S_i$. For each $i$, both
	$S_i$ and the complement of its
	interior $T_i = \mathbb{S}^3 \setminus S_i^\circ$ are solid tori
	with intersection $S_i
	\cap T_i$ a ($2$-dimensional) torus. At each step,
	$T_{i+1}$ is constructed inside $T_i$ as
	an unknotted solid torus of smaller cross-sectional area
	that winds $n_i$
	times around the meridian circle of $T_i$.
	Since $T_{i+1}$ can be unfolded to a standard embedding of a
	torus via an ambient isotopy of $\mathbb{S}^3$, $S_{i+1}$ must be a
	solid torus.

Let $f: \mathbb{S}^n\to U$ be an arbitrary map. Then $f$ maps $\mathbb{S}^n$ into one of the
solid tori $S_i$ and these are homotopic to their meridian circle. In particular $\pi_n(U)=0$ for all $n>1$.
By Remark \ref{rem:EML} $U$ has the type $K(\IQ, 1)$.

Choose any point $t \in \mathbb{S}^3\setminus U$. Since $\mathbb{S}^3\setminus \{t\}$ is homeomorphic to $\IR^3$, say via the
homeomorphism $F$, then $\Omega=F(U)$ is an open subset of $\IR^3$ of
the type $K(\IQ, 1)$.
\end{proof}

\begin{proposition}
	\label{prop:new-closed}
    Let $X$ be any locally compact $\sigma$-compact CW-complex of type $K(\IQ,1)$ (e.g. $X=\Delta$). Then the $C^*$-algebra $A=C_0(X,\mathbb{M}_n)$ ($n \geq 2$) has the following property:
    \begin{quote}
     There exists an operator $\phi \in \IB_{0,1}^{\mathrm{nv}}(A) \setminus \TM(A)$ such that $\phi$ is in the norm closure of
	$\TM_{cp}(A) \cap \TM_0(A)=\{M_{a,a^*} : a \in A\}$.
\end{quote}
In particular, $\TM_0(A)$, $\TM(A)$ and $\TM_{cp}(A)$ all fail to be norm closed.

Further, if $X=\Delta$, we have $\overline{\overline{\TM_0(A)}}=\IB_{0,1}(A)$.
\end{proposition}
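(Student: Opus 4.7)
The plan is to exhibit a phantom complex line subbundle $\CL$ of $\CE = X \times \IM_n$ and invoke Proposition~\ref{prop:bundletophi} to produce the desired $\phi \in \IB_{0,1}^{\mathrm{nv}}(A)$ with $\CL_\phi = \CL$. Nontriviality of $\CL$ combined with Proposition~\ref{prop:bundle} will then immediately give $\phi \notin \TM(A)$, so that once the approximation claim is verified, $\TM(A)$, $\TM_0(A)$ and $\TM_{cp}(A)$ all fail to be norm closed.

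To construct $\CL$, I will first apply Proposition~\ref{prop:K(Q,1)} to $\Delta$ to obtain a nontrivial complex line bundle $\CL_0$ on $\Delta$; since $\dim \Delta = 2$, Lemma~\ref{lem:3compl} realises $\CL_0$ as a subbundle of $\Delta \times \IC^2$. By Remark~\ref{rem:EML}(a) there is a homotopy equivalence $g \colon X \to \Delta$, and I set $\CL := g^{*}(\CL_0)$; this is a nontrivial line bundle on $X$ sitting inside $X \times \IC^2$. By Remark~\ref{rem:subbundlematrix}(b), for $n \geq 2$ one may then view $\CL$ as a subbundle of $\CE = X \times \IM_n$, and by Proposition~\ref{prop:K(Q,1)} $\CL$ is automatically a phantom bundle.

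The main technical step (and expected obstacle) is to refine the approximation supplied by Theorem~\ref{thm:CloureTM0A} so that the approximating operators lie in $\{M_{a,a^*} : a \in A\} \subseteq \TM_{cp}(A) \cap \TM_0(A)$. The crucial observation is that the explicit formula for $\phi$ in the proof of Proposition~\ref{prop:bundletophi} forces $\phi_t = \|\phi_t\| \cdot M_{s,s^*}$ for any unit vector $s \in \CL_t$. Exhausting $X$ by the compact sets $K_n = \{t \in X : \|\phi_t\| \geq 1/n\}$ (with $K_n \subset K_{n+1}^\circ$ and $\bigcup_n K_n = X$ by Corollary~\ref{cor:phi-t-continuous} and $\phi \in \IB_0(A)$), triviality of $\CL|_{K_{n+1}}$ (a consequence of $\CL$ being phantom) allows the choice of a continuous unit section $s_n$ of $\CL|_{K_{n+1}}$; then $a_n(t) := \sqrt{\|\phi_t\|}\, s_n(t)$ satisfies $\phi|_{K_{n+1}} = M_{a_n,a_n^*}$. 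A Urysohn cutoff $\chi_n \colon X \to [0,1]$ which is $1$ on $K_n$ and $0$ off $K_{n+1}^\circ$ produces $c_n := \chi_n \cdot a_n \in A$ (extended by $0$ off $K_{n+1}^\circ$) with $M_{c_n,c_n^*}|_{K_n} = \phi|_{K_n}$ and $\|c_n(t)\|^2 \leq \|\phi_t\| < 1/n$ for $t \notin K_n$; a fibrewise estimate via (\ref{eq:normformula}) then yields $\|M_{c_n,c_n^*} - \phi\| \leq 2/n \to 0$.

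Finally, for $X = \Delta$, to establish $\overline{\overline{\TM_0(A)}} = \IB_{0,1}(A)$ I will apply Corollary~\ref{cor:IB01clTM0}: it suffices to show that every complex line subbundle of $\CE|_U$ is trivial on each compact subset of $U$, for every open $U \subseteq \Delta$. Any such compact $K$ is itself a compact subset of $\Delta$, so by Remark~\ref{rem:CompSubsDelta} we have $\check{H}^2(K;\IZ) = 0$, and Remark~\ref{rem:para} then renders every complex line bundle over $K$ trivial, finishing the proof.
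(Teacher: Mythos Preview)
Your proposal is correct and follows essentially the same route as the paper: produce a phantom line subbundle $\CL\subset X\times\IM_n$ via Proposition~\ref{prop:K(Q,1)} and Lemma~\ref{lem:3compl}, take $\phi$ with $\CL_\phi=\CL$ so that $\phi\in\IB_{0,1}^{\mathrm{nv}}(A)\setminus\TM(A)$, and then approximate $\phi$ by $M_{c_n,c_n^*}$ using trivializations of $\CL$ on a compact exhaustion together with Urysohn cutoffs; the $X=\Delta$ clause is handled identically via Remark~\ref{rem:CompSubsDelta} and Corollary~\ref{cor:IB01clTM0}.

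The only noteworthy organisational difference is in the construction of $\phi$. The paper exploits the embedding $\CL\subset X\times\IC^2$ more directly: it picks two sections $a,b\in A$ with $\mathrm{span}\{a(t),b(t)\}=\CL_t$ and sets $\phi=M_{a,a^*}+M_{b,b^*}$, so that $\phi$ is visibly a completely positive elementary operator of length at most~$2$ (and hence exactly~$2$ once $\phi\notin\TM(A)$ is established). You instead invoke Proposition~\ref{prop:bundletophi} as a black box and then read off the fibrewise identity $\phi_t=\|\phi_t\|\,M_{s,s^*}$ from its proof; this is equally valid and in fact makes the subsequent approximation step slightly more transparent, but it forfeits the incidental length-$2$ information. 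Either way the argument goes through.
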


\begin{proof}
Choose any phantom complex line bundle $\CL$  over $X$ (Proposition \ref{prop:K(Q,1)}). Since, by Remark \ref{rem:EML} (a), $X$ has the same
homotopy type as the space $\Delta$ of Proposition \ref{prop:K(Q,1)} (which is a $2$-dimensional complex), using Remark \ref{rem:para} and Lemma \ref{lem:3compl} we may assume that $\CL $ is a subbundle of the trivial bundle $X \times \IC^2$. We also realise $\IC^2$ as a subset of $\mathbb{M}_n$  as $\{z_1 e_{1,1} + z_2e_{1,2}: z_1,z_2 \in \IC\}$, and in this way
consider $\CL $ a subbundle of $X \times \IM_n$. By the proof of Proposition \ref{prop:bundletophi} we can find two sections $a,b$ of $\CL $ vanishing at infinity (so that $a,b \in A$) such that $\SPAN\{a(t), b(t)\} = \CL _t$ for each $t \in X$.

We define a map
\[
\phi : A \to A \quad \mathrm{by}  \quad \phi=M_{a,a^*}+M_{b,b^*}.
\]
Then $\phi$ defines a completely positive elementary operator on $A$ of length at
most $2$. Clearly, $\phi_t \neq 0$ for all $t \in X$, so $\phi
\in  \IB_{0,1}^{\mathrm{nv}}(A)$.  Also, $\CL _\phi
=\CL $. Since the bundle $\CL $ is non-trivial, by Proposition
\ref{prop:bundle} we have $\phi \not \in \TM(A)$. On the other hand,
since $\CL $ is a phantom bundle, Theorem \ref{thm:CloureTM0A}
implies  $\phi \in \overline{\overline{\TM_0(A)}}$. Thus $\phi \in
\overline{\overline{\TM_0(A)}} \setminus \TM(A)$
and consequently $\phi$ has length $2$.

We have $\phi_K$ completely positive on $A_K = \Gamma(\CE|_K)$ for each compact $K \subset X$.
	Since $\CL|_K$ is a trivial bundle, $\phi_K = M_{a,b}$ for some
	$a, b \in A_K$ and we may suppose $\|a(t)\| = \|b(t)\|$ holds for all $t \in K$. It follows from positivity of $\phi_t$ that $b(t) = a(t)^*$
	(for $t \in K$). By the proof of Theorem~\ref{thm:CloureTM0A}, $\phi$ is in the norm closure of $\TM_{cp}(A) \cap \TM_0(A)$.

Now suppose that $X=\Delta$. Then by Remark \ref{rem:CompSubsDelta} $\check{H}^2(K;\IZ)=0$ for all compact subsets $K$ of $\Delta$. By Corollary \ref{cor:IB01clTM0} (and Remark \ref{rem:para}) we conclude that $\overline{\overline{\TM_0(A)}}=\IB_{0,1}(A)$.
\end{proof}

\begin{theorem}
	\label{thm:TM-closed}
	Let $A =\Gamma_0(\CE)$ be an
	$n$-homogeneous \Cstar-algebra with $n \geq 2$.
 \begin{itemize}
\item[(a)] If $X$ is second-countable with $\dim X<2$ or if $X$ is (homeomorphic to)
	a subset of a non-compact connected $2$-manifold, then both $\TM_0(A)$ and $\TM(A)$ are norm closed.
\item[(b)]
	If there is a nonempty open subset of $X$
	homeomorphic to (an open subset of) $\IR^d$ for some $d \geq 3$, then
	$\TM_0(A)$ and $\TM(A)$ both fail to be norm closed.
\end{itemize}
\begin{proof}
(a) follows from Corollary \ref{cor:H2-R2sub} (a) and Proposition \ref{prop:IB1-normclosed}.

For (b)  we first choose an open subset $U\subset X$
for which $\CE|_U$ is trivial
and
such that $U$
can be considered as an open set in $\IR^d$ $(d \geq 3)$.
Choose any open subset $V$ of $U$ that has the homotopy type of the set $\Omega$ of Proposition~\ref{prop:solenoid}.
In particular, $V$ is of type $K(\IQ,1)$, so it allows a phantom complex line bundle (Proposition \ref{prop:K(Q,1)}).
Now apply Corollary \ref{cor:phbun}.
\end{proof}
\end{theorem}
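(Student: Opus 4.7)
The plan is to reduce both parts to the structural results established in earlier sections. Part (a) will follow immediately by combining a line-bundle triviality statement with a closedness statement, while part (b) is the substantive case and reduces to producing a phantom complex line subbundle in some $\sigma$-compact open region of $X$, so that Corollary~\ref{cor:phbun} applies.

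For part (a), I would invoke Corollary~\ref{cor:H2-R2sub}(a), which under either of the two topological hypotheses on $X$ yields $\IB_1(A)=\TM(A)$ and $\IB_{0,1}(A)=\TM_0(A)$. Proposition~\ref{prop:IB1-normclosed} asserts that $\IB_1(A)$ and $\IB_{0,1}(A)$ are norm closed subsets of $\mathcal B(A)$. Combining the two gives closedness of $\TM(A)$ and $\TM_0(A)$ at once; no further work is needed.

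For part (b), I would first use local triviality of $\CE$ together with the hypothesis to select an open $U\subset X$ on which $\CE|_U\cong U\times\IM_n$ and that is homeomorphic to an open subset of $\IR^d$ with $d\geq 3$. Next I would place inside $U$ an open subset $V$ of the homotopy type of the set $\Omega$ from Proposition~\ref{prop:solenoid}: when $d=3$, any open ball in $U$ is homeomorphic to $\IR^3$ and contains a rescaled copy of $\Omega$; when $d>3$, the product $\Omega\times\IR^{d-3}$ is open in $\IR^3\times\IR^{d-3}=\IR^d$ and homotopy equivalent to $\Omega$, so fits inside any open ball of $U$. Either way $V$ is open in $X$, $\sigma$-compact, and of type $K(\IQ,1)$, so Proposition~\ref{prop:K(Q,1)} supplies a non-trivial (hence phantom) complex line bundle $\CL$ over $V$. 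Since $\CL$ is classified by a map into $\IC P^\infty$ which, by cellular approximation and Lemma~\ref{lem:3compl} applied to a CW-model of $V$, factors through $\IC P^1$, I can view $\CL$ as a subbundle of $V\times\IC^2$, and then (using $n\geq 2$, exactly as in the proof of Proposition~\ref{prop:new-closed}, via Remark~\ref{rem:subbundlematrix}(b)) as a line subbundle of $V\times\IM_n\cong\CE|_V$. Corollary~\ref{cor:phbun} then immediately delivers that $\TM_0(A)$ fails to be norm closed, and the same corollary's final clause gives the failure for $\TM(A)$.

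The hard part of the argument is not in this proof itself but is absorbed into the preparatory results: the existence of an open subset of $\IR^3$ of type $K(\IQ,1)$ (Proposition~\ref{prop:solenoid}, resting on the solenoid construction from \cite{1212.0128}) and the fact that every non-trivial line bundle over such a space is phantom (Proposition~\ref{prop:K(Q,1)}). Once those are in hand, the only genuinely new verifications are the embedding of a suitable $K(\IQ,1)$ open set into $U$ and the realization of its line bundle inside $\CE|_V$, both of which are routine. The dimension threshold $d\geq 3$ and fibre-dimension threshold $n\geq 2$ are both sharp by part~(a) and the triviality of complex line bundles over $0$-dimensional fibres.
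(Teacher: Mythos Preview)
Your proposal is correct and follows essentially the same route as the paper's proof: part~(a) via Corollary~\ref{cor:H2-R2sub}(a) and Proposition~\ref{prop:IB1-normclosed}, and part~(b) by locating inside a trivializing Euclidean chart an open $K(\IQ,1)$ set (built from the $\Omega$ of Proposition~\ref{prop:solenoid}), invoking Proposition~\ref{prop:K(Q,1)} for a phantom line bundle, and then Corollary~\ref{cor:phbun}. Where the paper simply says ``choose any open subset $V$ of $U$ that has the homotopy type of $\Omega$'' and ``now apply Corollary~\ref{cor:phbun}'', you supply the two details the paper leaves implicit---the concrete embedding of $V$ (via $\Omega$ or $\Omega\times\IR^{d-3}$) and the realization of $\CL$ as a line subbundle of $V\times\IM_n\cong\CE|_V$ through the $2$-dimensional CW-model $\Delta$---but these are elaborations, not a different strategy.
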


\begin{remark}
Suppose that $A=\Gamma_0(\CE)$ is a separable $n$-homogeneous $C^*$-algebra with $n \geq 2$ such that $\dim X =d<\infty$.
By Remark \ref{rem:trivfindim} (applied to an $\IM_n$-bundle $\CE$) $A$ has the finite type property. Hence, by \cite[Theorem~1.1]{MagUnif2009}, we have $\IB(A)=\El(A)$. If $X$ is either a CW-complex or a subset of a $d$-manifold, the following relations between $\TM_0(A)$, $\overline{\overline{\TM_0(A)}}$ and
$\IB_{0,1}(A)$ occur:
\begin{itemize}
\item[(a)] If $d<2$ we always have $\TM_0(A)=\overline{\overline{\TM_0(A)}}=\IB_{0,1}(A)$ (Corollary~\ref{cor:H2-R2sub} (a)).
\item[(b)] If $d=2$ we have four possibilities:
\begin{itemize}
\item[(i)] $\TM_0(A)=\overline{\overline{\TM_0(A)}}=\IB_{0,1}(A)$. This happens e.g. whenever $X$ is a subset of a non-compact connected $2$-manifold (Corollary~\ref{cor:H2-R2sub} (a)).
\item[(ii)] $\TM_0(A)=\overline{\overline{\TM_0(A)}}\subsetneq \IB_{0,1}(A)$. This happens e.g. for $A=C(X, \IM_n )$, where $X=\mathbb{S}^2$ (by Example \ref{ex:spheretorusklein} and since any proper open subset $U$ of $\mathbb{S}^2$ is homeomorphic to an open subset of $\IR^2$, so $\check{H}^2(U;\IZ)=0$ by Proposition~\ref{prop:Bestvina}).
\item[(iii)] $\TM_0(A)\subsetneq \overline{\overline{\TM_0(A)}}=\IB_{0,1}(A)$. This happens e.g. for $A=C_0(X, \IM_n )$, where $X=\Delta$ is the standard model of $K(\IQ,1)$ (Proposition \ref{prop:new-closed}).
\item[(iv)] $\TM_0(A)\subsetneq \overline{\overline{\TM_0(A)}}\subsetneq \IB_{0,1}(A)$. This happens e.g. for $A=C_0(X, \IM_n )$, where $X$ is the topological disjoint union $\mathbb{S}^2 \sqcup \Delta$ (by Proposition \ref{prop:new-closed}, Corollary \ref{cor:IB01clTM0} and Example \ref{ex:spheretorusklein}).
\end{itemize}
\item[(c)] If $d>2$ we always have $\TM_0(A)\subsetneq
	\overline{\overline{\TM_0(A)}}\subsetneq \IB_{0,1}(A)$ (by
	Theorem~\ref{thm:TM-closed} (b) and the fact that $X$ must
	contain an open subset homeomorphic to $\IR^d$ --- if $X$ is
	a subset of a $d$-manifold, this follows from \cite[Theorems
	1.7.7, 1.8.9 and 4.1.9]{EngDimThBk}).
\end{itemize}
Similar relations occur between
$\TM(A)$, $\overline{\overline{\TM(A)}}$ and  $\IB_{1}(A)$ in parts (a) and (c) of the above cases.
\end{remark}

\noindent
\textbf{Acknowledgement:}
	We are very grateful to Mladen Bestvina for his extensive and generous help with many of the topological
aspects of the paper.

\end{document}